\setlist[enumerate]{topsep=0pt,itemsep=-1ex,partopsep=1ex,parsep=1ex}
\newtheorem{statement}{}[section]
\newtheorem{theorem}[statement]{Theorem}
\newtheorem{lemma}[statement]{Lemma}
\newtheorem{proposition}[statement]{Proposition}
\newtheorem{definition}[statement]{Definition}
\newtheorem{corollary}[statement]{Corollary}
\newtheorem{remark}[statement]{Remark}
\newtheorem{example}[statement]{Example}
\newcommand{\subjclass}[2][1991]{%
  \let\@oldtitle\@title%
  \gdef\@title{\@oldtitle\footnotetext{#1 \textbf{Mathematics subject classification:} #2}}%
}
\newcommand{\keywords}[1]{%
  \let\@@oldtitle\@title%
  \gdef\@title{\@@oldtitle\footnotetext{\textbf{Key words and phrases:} #1}}%
}
\def\face{\ensuremath\mathfrak{F}}
\def\HB{\ensuremath\mathcal{HB}}
 \def\Az{\ensuremath \mathfrak{C}_T} % A_0 --> U-core with T
  \def\ex{\ensuremath \mathfrak{D}} % A U-core without T
\def\ucore{U-core} %A_0 provisional name
\def\dcup{\dot{\cup}}
\def\nn{{\|\cdot\|}}
\def\N{\Bbb N}
\def\IN{\hbox{{\rm I}\kern-.13em{\rm N}}}
\def\R{\mathbb{R}}
\def\IR{\hbox{{\rm I}\kern-.13em{\rm R}}}
\def\nin{n\in\N}
\def\co{{\rm conv}\,}
\def\cco{\overline{\rm conv}\,}
\def\sp{\hbox{{\rm span }}}
\def\ee{\varepsilon}
\def\aa{\alpha}
\def\dd{\delta}
\def\eor{\hfill{\circledR}}
\def\Ext{\hbox{\rm{Ext} }}
\def\NA{\hbox{\rm{NA}}}
\def\JB{\hbox{\rm{JB}}}
\def\la{\langle}
\def\ra{\rangle}
\def\wh{\widehat}
\def\ol{\overline}
\title{Phelps Property U and $C(K)$ spaces}
\subjclass[\textbf{2020}]{46B20, 46B04, (46A22, 46E15)}
\keywords{Phelps Property U, Hahn--Banach extensions, isometric embeddings, spaces of continuous functions.}
\author{Ch. Cobollo,
%\footnote{Christian Cobollo. Instituto Universitario de Matem\'atica Pura y Aplicada. Universitat Polit\`ecnica de Val\`encia (Spain). Email: chcogo@doctor.upv.es. Supported in part by Generalitat Valenciana (through Project PROMETEU/2021/070 and the predoctoral fellowship CIACIF/2021/378), by MCIN/AEI/10.13039/501100011033 (through Project PID2019-105011GB), by MCIN/AEI/10.13039/501100011033 and “ERDF A way of making Europe” (Grant PID2021-122126NB-C33) and the Universitat Polit\`ecnica de Val\`encia (Spain).}
 A.J. Guirao,
%\footnote{Antonio José Guirao. Instituto Universitario de Matem\'atica Pura y Aplicada. Universitat Polit\`ecnica de Val\`encia (Spain). Email: anguisa2@mat.upv.es. Supported in part by Fundación Séneca, Región de Murcia (Grant 19368/PI/14), by MCIN/AEI/10.13039/501100011033 and “ERDF A way of making Europe” (Grant PID2021-122126NB-C33) and the Universitat Polit\`ecnica de Val\`encia (Spain).}
 and V. Montesinos
%\footnote{Vicente Montesinos. Instituto Universitario de Matem\'atica Pura y Aplicada. Universitat Polit\`ecnica de Val\`encia (Spain). Email: vmontesinos@mat.upv.es. Supported in part by Projects MTM2017-83262-C2-2-P and PID2021-122126NB-C33/MCIN/AEI/10.13039/501100011033 (FEDER) (Spain), and the Universitat Polit\`ecnica de Val\`encia (Spain).}
}
\date{}
\begin{document}
\normalem
\maketitle
\begin{abstract}
A subspace $X$ of a Banach space $Y$ has {\em Property U}  whenever every continuous linear functional on $X$ has a unique norm-preserving (i.e., Hahn--Banach) extension to $Y$ (Phelps, 1960). Throughout this document we introduce and develop a systematic study of the existence of {\em U-embeddings} between Banach spaces $X$ and $Y$, that is, isometric embeddings of $X$ into $Y$ whose ranges have property U. In particular, we are interested in the case that $Y=C(K)$, where $K$ is a compact Hausdorff topological space. We provide results for general Banach spaces and for some specific set-ups, such as $X$ being a  finite-dimensional space or a $C(K)$-space.

\end{abstract}

\section{Preliminaries}

Given a Banach space $Y$ and a closed subspace $X$, let us denote by ${i\colon X \to Y}$ the inclusion (linear isometric embedding) from $X$ into $Y$. Its adjoint ${i^*\colon Y^*\rightarrow X^*}$ can be identified to the canonical quotient mapping $q\colon Y^*\rightarrow Y^*/X^{\perp}$, as $X^*$ is isometrically isomorphic to $Y^*/X^{\perp}$. The mapping $q$ acts as a ``restriction operator'', sending linear continuous functionals $y^*\in Y^*$ to its restriction to $X$, i.e., $q(y^*)={y^*}|_{X}$ for $y^*\in Y^*$. The mapping $q$ is $\nn$-$\nn$-continuous, $w$-$w$-continuous, and $w^*$-$w^*$-continuous.

\medskip

Along the paper, and if nothing is said to the contrary, {\em if $X$ is a Banach space, then its topological dual will always be considered endowed with its $w^*$-topology}.

\medskip

Given $x^*\in X^*$, the set of all continuous linear extensions of $x^*$ to $Y$ is $q^{-1}(x^*)$. If $X\not=Y$, this set contains functionals with greater norm than $\|x^*\|$). Observe that $q^{-1}(x^*)=y^*+X^{\perp}$, where $y^*$ is a particular continuous linear extension of $x^*$ to $Y$. Thus, the (nonempty) set of all Hahn--Banach (i.e., norm-preserving) extensions  of $x^*$ to $Y$ is
\begin{equation}\label{eq-HB(x^*)}
\HB(x^*):=q^{-1}(x^*)\cap \|x^*\| S_{Y^*} = (y^* + X^\perp)\cap \|x^*\| S_{Y^*}.
\end{equation}

More generally, the set of all Hahn--Banach extensions of the elements in a subset $A\subset X^*$ will be denoted by  $\HB(A)$, i.e., 
\begin{equation*}
    \HB(A):= \{y^*\in Y^* : y^*\in \HB(x^*) \text{ for some }x^*\in A \}
\end{equation*}
 By the word ``subspace'' we shall mean, if nothing is said to the contrary, a {\em closed} linear subspace.
 
 \medskip
 
 The following is the central concept treated here:

\begin{definition}
\label{def-property-u}
Let $X$ be a subspace of a Banach space $Y$. It is said that $X$ has {\bf property U in $Y$} whenever every $x^*\in X^*$ has a unique Hahn--Banach  extension to $Y$.
\end{definition}
 In other words, for every $x^*\in S_{X^*}$, the set $\HB(x^*)$ is a singleton. This property was introduced by Phelps in \cite{Phelps1960a}. He started a systematic study of property U within certain classical Banach spaces. Later, Smith and Sullivan considered the particular case $Y:=X^{**}$.

It was in 1939 when Taylor ---see~\cite{Taylor1939}--- studied the question of identifying which Banach spaces $Y$ satisfy that any of its subspaces has property U in $Y$. The complete characterization was provided in 1958 by Foguel. We include it here for later reference: 

\begin{proposition}[Taylor--Foguel\rm{, \cite{Foguel1958}}]\label{prop-rotund-and-U}
Let $(Y,\nn)$ be a Banach space. The dual norm $\nn^*$ on $Y^*$ is strictly convex if, and only if, every subspace of $Y$ has property U in $Y$.
\end{proposition}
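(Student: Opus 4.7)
The plan is to prove the two implications separately, both via contrapositive-style arguments. The forward direction uses strict convexity directly on the average of two putative extensions, while the converse requires constructing an explicit subspace on which property U fails.

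For the forward implication, assume $\nn^*$ is strictly convex. Let $X\subset Y$ be an arbitrary subspace and take $x^*\in S_{X^*}$ together with two Hahn--Banach extensions $y_1^*,y_2^*\in\HB(x^*)$, necessarily on $S_{Y^*}$. The average $(y_1^*+y_2^*)/2$ still extends $x^*$, so its norm is at least $\|x^*\|=1$; combined with the triangle inequality, $\|(y_1^*+y_2^*)/2\|=1$. Strict convexity of $\nn^*$ then forces $y_1^*=y_2^*$. This step is essentially formal.

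For the converse, I would argue by contrapositive. Suppose $\nn^*$ is not strictly convex and choose $y_1^*\neq y_2^*$ in $S_{Y^*}$ with $\|y_1^*+y_2^*\|=2$. The natural candidate subspace is the closed hyperplane $X:=\ker(y_1^*-y_2^*)$, since then $y_1^*|_X=y_2^*|_X=:x^*$. If I can establish $\|x^*\|=1$, then $y_1^*$ and $y_2^*$ are two distinct Hahn--Banach extensions of $x^*$, so $X$ fails property U in $Y$. Verifying this norm equality is the main obstacle, because passing to the hyperplane $X$ could a priori shrink the norm of the restriction.

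To overcome this, I would exploit $\|y_1^*+y_2^*\|=2$ to pick a sequence $(y_n)\subset B_Y$ with $(y_1^*+y_2^*)(y_n)\to 2$. Since $|y_i^*(y_n)|\le 1$, this forces both $y_1^*(y_n)\to 1$ and $y_2^*(y_n)\to 1$, and hence $\alpha_n:=(y_1^*-y_2^*)(y_n)\to 0$. Fixing any $y_0\in Y$ with $(y_1^*-y_2^*)(y_0)=1$, the corrected vectors $x_n:=y_n-\alpha_n y_0$ now lie in $X$, still satisfy $\|x_n\|\to 1$, and have $x^*(x_n)=y_1^*(x_n)\to 1$. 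This yields $\|x^*\|\ge 1$; the reverse inequality is immediate from $\|x^*\|\le\|y_1^*\|=1$, completing the argument.
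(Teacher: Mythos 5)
Your proof is correct and follows essentially the same route as the paper: strict convexity collapses the convex set $\HB(x^*)\subset S_{Y^*}$ of Hahn--Banach extensions to a point, and for the converse one restricts the two endpoints of a nontrivial segment of $S_{Y^*}$ to the hyperplane $X=\ker(y_1^*-y_2^*)$. The only divergence is how $\|x^*\|=1$ is justified: you verify it explicitly by pushing an almost-norming sequence into the hyperplane, whereas the paper leaves it implicit (there it follows from $X^\perp=\sp(y_1^*-y_2^*)$ together with the convexity of $\lambda\mapsto\|(1-\lambda)y_2^*+\lambda y_1^*\|$, which is $\equiv 1$ on $[0,1]$ and hence $\geq 1$ on all of $\R$); your explicit check of this step is welcome.
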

\begin{proof} One direction is given by the following observation: Given $x^*\in S_{X^*}$, the set $\HB(x^*)=q^{-1}(x^*)\cap S_{Y^*}$ is a nonempty ``flat'' subset of $S_{Y^*}$, so it reduces to a point. For the reverse implication, assume that the sphere $S_{Y^*}$ contains a non-trivial segment $[y_1^*,y_2^*]$. Take $X:=\{y\in Y : \la y, y_1^*-y_2^*\ra=0\}$. Thus, $X^\perp = \sp(y_1^*-y_2^*)$. Take $x^*_0:= q(y_2^*)$. Therefore, $\HB(x^*_0)= (y_2^*+X^\perp)\cap S_{Y^*}$, a set that contains the segment $[y_1^*,y_2^*]$. Thus, $S_{Y^*}$ is not strictly convex.
\end{proof}

\begin{remark}\label{rem-q1}
\rm Assume that $X$ is a subspace of a Banach space $Y$ and that $X$ has property U in $Y$. As it is well known and easy to see, every element $e^*_1\in \Ext B_{X^*}$ is the image of an element $e^*_0\in \Ext  B_{Y^*}$.  This element $e^*_0$ is unique. This shows that $\HB(\Ext B_{X^*}) \subset \Ext B_{Y^*}$.\eor
\end{remark}
We can complete the information given in Remark \ref{rem-q1}. First we need an almost trivial observation about mappings. Let us record it as a lemma for later reference. 

\begin{lemma}
\label{lemma-intersection}
Let $A$ and $B$ be two nonempty sets, and let $f:A\rightarrow B$ be a mapping. Let $A_0\subset A$ and $B_0\subset B$. Then $f(A_0\cap f^{-1}(B_0))=f(A_0)\cap B_0.$
\end{lemma}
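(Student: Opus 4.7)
The statement is a purely set-theoretic identity, so the natural approach is a routine double-inclusion argument, chasing elements through the definitions of image, preimage, and intersection. No hypothesis about $f$ (injectivity, surjectivity) is needed.

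For the inclusion $f(A_0\cap f^{-1}(B_0))\subset f(A_0)\cap B_0$, I would pick $y$ in the left-hand side, write $y=f(x)$ with $x\in A_0\cap f^{-1}(B_0)$, and then note separately that $x\in A_0$ gives $y\in f(A_0)$, while $x\in f^{-1}(B_0)$ means precisely $f(x)\in B_0$, hence $y\in B_0$. Combining these two observations yields $y\in f(A_0)\cap B_0$.

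For the reverse inclusion $f(A_0)\cap B_0\subset f(A_0\cap f^{-1}(B_0))$, I would take $y\in f(A_0)\cap B_0$. From $y\in f(A_0)$ I pick a preimage $x\in A_0$ with $f(x)=y$. From $y\in B_0$ and $f(x)=y$ I conclude $f(x)\in B_0$, i.e., $x\in f^{-1}(B_0)$. So this same $x$ lies in $A_0\cap f^{-1}(B_0)$, and therefore $y=f(x)\in f(A_0\cap f^{-1}(B_0))$.

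There is no real obstacle here; the only thing to be mindful of is that on the right-hand side the choice of preimage $x$ must be arranged so that $x\in A_0$, which is exactly what the membership $y\in f(A_0)$ guarantees. This is why one cannot replace $f(A_0\cap f^{-1}(B_0))$ by the unrelated expression $A_0\cap f^{-1}(B_0)$ or do anything fancier: the lemma is really a tautological rewriting, and its usefulness in the sequel will come from applying it with $f=q$, $A_0$ a distinguished subset of $Y^*$ (such as $B_{Y^*}$ or $\Ext B_{Y^*}$), and $B_0$ a distinguished subset of $X^*$.
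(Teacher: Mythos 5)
Your double-inclusion argument is correct and is exactly the routine element-chase one would expect; the paper itself omits the proof entirely, calling the lemma an ``almost trivial observation,'' so there is nothing to compare beyond noting that your write-up supplies the standard details. No gaps.
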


\begin{proposition}\label{prop-U-homeo}
Let $Y$ be a Banach space and let $X$ be a linear subspace of $Y$. Then, $X$ has property U in $Y$ if, and only if, $q|_{\HB(S_{X^*})}:\HB(S_{X^*})\rightarrow S_{X^*}$ is (onto and) one-to-one. If this is the case, it is a $w^*$-$w^*$-homeomorphism. 
\end{proposition}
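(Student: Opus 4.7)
The plan is to first establish the biconditional, which is essentially an unpacking of definitions, and then address the topological refinement by combining $w^*$-compactness of $B_{Y^*}$ (Banach--Alaoglu) with the uniqueness supplied by property U.

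For the equivalence, I would first observe that $\HB(S_{X^*}) = q^{-1}(S_{X^*}) \cap S_{Y^*}$, so $q|_{\HB(S_{X^*})}$ does land in $S_{X^*}$, and surjectivity is automatic: any $x^* \in S_{X^*}$ admits, by Hahn--Banach, some $y^* \in S_{Y^*}$ with $q(y^*)=x^*$, and such $y^*$ lies in $\HB(x^*) \subset \HB(S_{X^*})$. The injectivity of the restricted map is the assertion that each fiber $\HB(x^*)$ is a singleton for every $x^* \in S_{X^*}$. By positive homogeneity of $q$ and of the norm (together with the trivial fiber over $0$), this is equivalent to uniqueness for every $x^* \in X^*$, which is property U.

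Assume now that property U holds. The map $q|_{\HB(S_{X^*})}$ is $w^*$-$w^*$-continuous since $q$ is, as recorded in the preliminaries. For the inverse, let $(x_\alpha^*)$ be a net in $S_{X^*}$ with $x_\alpha^* \xrightarrow{w^*} x^*$, and let $y_\alpha^*$ and $y^*$ denote the corresponding unique Hahn--Banach extensions. I would show that every subnet of $(y_\alpha^*)$ has a sub-subnet $w^*$-converging to $y^*$; this forces $y_\alpha^* \xrightarrow{w^*} y^*$. Indeed, since $(y_\alpha^*) \subset B_{Y^*}$ and $B_{Y^*}$ is $w^*$-compact, any subnet has a $w^*$-cluster point $z^* \in B_{Y^*}$; refining to a subnet converging to $z^*$ and applying the $w^*$-$w^*$-continuity of $q$ gives $q(z^*)=x^*$. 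Then $1 = \|x^*\| = \|q(z^*)\| \leq \|z^*\| \leq 1$, so $z^* \in S_{Y^*}$, i.e., $z^* \in \HB(x^*)$. Property U forces $z^* = y^*$.

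The only step with genuine content is the continuity of the inverse, and it rests on three standard ingredients: Banach--Alaoglu compactness of $B_{Y^*}$, the $w^*$-$w^*$-continuity of $q$ noted in the preliminaries, and the uniqueness dictated by property U. I do not anticipate a real obstacle, since all three ingredients are already in place; the only care needed is in phrasing the subnet argument correctly so that the cluster point is identified with the unique Hahn--Banach extension $y^*$.
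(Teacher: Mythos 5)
Your proof is correct, and it reaches the homeomorphism statement by a genuinely different (though closely related) route from the paper's. The paper does not argue with nets at all: it shows that $q|_{\HB(S_{X^*})}$ is a \emph{closed} map, i.e., that it sends $w^*$-relatively closed subsets of $\HB(S_{X^*})$ to $w^*$-relatively closed subsets of $S_{X^*}$. Writing such a set as $B=C\cap\HB(S_{X^*})$ with $C$ $w^*$-closed in $B_{Y^*}$ (hence $w^*$-compact by Banach--Alaoglu), it observes that $C\cap q^{-1}(S_{X^*})=C\cap\HB(S_{X^*})$ and invokes the elementary identity $f(A_0\cap f^{-1}(B_0))=f(A_0)\cap B_0$ (Lemma \ref{lemma-intersection}) to get $q(B)=q(C)\cap S_{X^*}$, which is relatively closed since $q(C)$ is $w^*$-compact. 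The crucial common ingredient in both arguments is the norm squeeze: any $z^*\in B_{Y^*}$ with $q(z^*)\in S_{X^*}$ satisfies $1=\|q(z^*)\|\le\|z^*\|\le 1$, so it is automatically a Hahn--Banach extension; the paper encodes this in the identity $C\cap q^{-1}(S_{X^*})=C\cap\HB(S_{X^*})$, while you use it to identify the cluster point $z^*$ with the unique extension $y^*$. Your subnet argument is slightly longer to write out carefully but is self-contained and makes the role of uniqueness (property U) more visible; the paper's closed-map argument is shorter, avoids the ``every subnet has a convergent sub-subnet'' bookkeeping, and, notably, does not use injectivity at all in the closedness step --- it isolates the purely topological fact that $q$ restricted to $\HB(S_{X^*})$ is always a closed map, with property U only needed to conclude that a continuous closed bijection is a homeomorphism. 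Both proofs are valid.
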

\begin{proof} The necessary and sufficient condition is just the definition of property U. That the mapping is $w^*$-$w^*$-continuous was mentioned above. What is somehow interesting is the fact that the inverse mapping ---that exists under property U--- is in fact $w^*$-$w^*$-continuous, too. 

For this, it is enough to show that $q$ sends $w^*$-relatively closed sets in $\HB(S_{X^*})$ to $w^*$-relatively closed sets in $S_{X^*}$. Indeed, let $B$ be an arbitrary $w^*$-relatively closed set of $\HB(S_{X^*})$. Thus, $B=C\cap \HB(S_{X^*})$, where $C$ is a $w^*$-closed (hence $w^*$-compact) subset of $B_{Y^*}$. Observe that if $C\subset B_{Y^*}$, then $C\cap q^{-1}(S_{X^*})=C\cap \HB(S_{X^*})$.  Then, by Lemma \ref{lemma-intersection}, $q(B)=q(C)\cap S_{X^*}$. Since $q(C)$ is also $w^*$-compact (hence $w^*$-closed), this concludes the proof. 
\end{proof}

\subsection{Norm-attaining functionals: wU property}

Assume again that $Y$ is a Banach space and that $X$ is a subspace of $Y$. Denote by 
$$
\NA(X) \hbox{ the subset of $X^*$ of all norm-attaining functionals.}
$$
If $x^*_0\in \NA(X)\cap S_{X^*}$ and $x_0\in S_X$ is such that $\langle x_0,x^*_0\rangle=1$, then
\begin{equation}\label{eq-qinv}
q^{-1}(x^*_0)\subset \{y^*\in Y^*:\langle x_0,y^*\rangle=1\}.
\end{equation}
Notice that if $y^*$ is a Hahn--Banach extension of $x_0^*$, then $y^*$ also attains its norm (at $x_0$). 

In general, the two sets $q^{-1}(x^*_0)$ and $\{y^*\in Y^*:\ \langle x_0,y^*\rangle=1\}$ are different: the first one consists of all the extensions of $x_0^*$ to $Y$, i.e., $y^*+X^{\perp}$, where $y^*$ is a given extension of $x^*_0$ to $Y$, while the second one is the affine hyperplane in $Y^*$ defined by $x_0$ ---to be precise, the affine hyperplane defined by $i(x_0)\in Y$.

The set 
$$
\face(x_0):=\{x^*\in S_{X^*}:\ \langle x_0,x^*\rangle=1\}
$$ 
is the {\bf face} of $B_{X^*}$ defined by $x_0$. Accordingly, $\face(i(x_0))$ is
the face of $B_{Y^*}$ defined by $i(x_0)\ (\in Y)$. By equation (\ref{eq-qinv}), 
$\HB(x_0^*)=q^{-1}(x^*_0)\cap S_{Y^*}\subset \face(i(x_0))$. It is easy to see 
that $\face(i(x_0))=q^{-1}(\face(x_0))\cap 
S_{Y^*}$ ---equivalently, $\face(i(x_0))= \HB(\face(x_0))$---, 
and that $q(\face(i(x_0))=\face(x_0)$. Summing up, the faces of $B_{Y^*}$ consist of the Hahn--Banach extensions of the elements in the faces of $B_{X^*}$, and conversely, the elements in the faces of $B_{X^*}$ are just the restrictions of the elements in the faces of  $B_{Y^*}$.% As the restriction $q$ is a well-behaved map, this will became a key fact.

\medskip

The following is a weakening of property U:  

\begin{definition}\label{def-property-wu}
Let $X$ be a subspace of a Banach space $Y$. The space $X$ has {\bf property wU} {\em  ({\bf weak U}) } in $Y$ (or {\bf $X$ is wU-embedded in $Y$})  whenever every functional $x^*\in \NA(X)$ has a unique Hahn--Banach extension to $Y$.
\end{definition} In other words, for every $x_0\in S_{X}$  and $x^*_0\in \face(x_0)$ there exists a unique $y^*_0\in \face(i(x_0))$ such that $q(y^*_0)=x^*_0$. Thus, in this case $q|_{\face(i(x_0))}$ is a $w^*$-$w^*$-homeomorphism from $\face(i(x_0))$ onto $\face(x_0)$ (see the argument in Proposition \ref{prop-U-homeo} above) ---and this is equivalent to the wU property of $X$.

Let us record the previous simple observation as a proposition for later reference. Compare with Proposition \ref{prop-U-homeo} above.

\begin{proposition}\label{prop-wu-and-faces}
Let $X$ be a subspace of a Banach space $Y$. The space $X$ has property wU in $Y$ if, and only if, $q|_{\HB(NA(X)\cap S_{X^*})}:\HB(NA(X)\cap S_{X^*})\rightarrow NA(X)\cap S_{X^*}$ is one-to-one. If this is the case,  $q|_{\HB(NA(X)\cap S_{X^*})}$ is a $w^*$-$w^*$-homeomorphism from  $\HB(NA(X)\cap S_{X^*})$ onto $NA(X)\cap S_{X^*}$. In particular, this happens if, and only if, 
$q|_{\face(i(x))}$ is a $w^*$-$w^*$-homeomorphism from $\face(i(x))$ onto $\face(x)$, for every $x\in S_X$.
\end{proposition}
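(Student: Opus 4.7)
The plan is to mirror the structure of Proposition \ref{prop-U-homeo}, leveraging the key observation that $\HB(\NA(X)\cap S_{X^*}) = \bigcup_{x\in S_X}\face(i(x))$ and $\NA(X)\cap S_{X^*} = \bigcup_{x\in S_X}\face(x)$, together with the equality $q(\face(i(x))) = \face(x)$ established in the discussion preceding the statement. The initial equivalence is then a direct reformulation of wU: every $x^*\in \NA(X)\cap S_{X^*}$ lies in some face, and its Hahn--Banach extensions populate the corresponding face of $B_{Y^*}$; since $q|_{\HB(\NA(X)\cap S_{X^*})}$ is automatically onto (by Hahn--Banach), the unique-extension condition becomes precisely injectivity on this restricted domain.

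Next I would establish, under wU, that the restriction is a $w^*$-$w^*$-homeomorphism, imitating the argument in Proposition \ref{prop-U-homeo}. Since $q$ is already $w^*$-$w^*$-continuous, it suffices to show that $q$ sends $w^*$-relatively closed subsets of $\HB(\NA(X)\cap S_{X^*})$ to $w^*$-relatively closed subsets of $\NA(X)\cap S_{X^*}$. Write such a set as $B = C\cap \HB(\NA(X)\cap S_{X^*})$ for some $w^*$-compact $C\subset B_{Y^*}$. The crucial identity is that whenever $y^*\in C$ satisfies $q(y^*)\in \NA(X)\cap S_{X^*}$, the inequalities $1 = \|q(y^*)\|\le \|y^*\|\le 1$ force $y^*$ to be a Hahn--Banach extension. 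Thus $C\cap q^{-1}(\NA(X)\cap S_{X^*}) = C\cap \HB(\NA(X)\cap S_{X^*}) = B$, and Lemma \ref{lemma-intersection} yields $q(B) = q(C)\cap (\NA(X)\cap S_{X^*})$, which is $w^*$-relatively closed since $q(C)$ is $w^*$-compact.

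For the ``in particular'' clause I would argue both directions separately. If wU holds, the restriction of $q$ to any single face $\face(i(x))$ is a $w^*$-$w^*$-continuous bijection onto $\face(x)$; since $\face(i(x))$ is $w^*$-compact and $\face(x)$ is Hausdorff, it is automatically a homeomorphism. Conversely, if each $q|_{\face(i(x))}$ is one-to-one, then given $y_1^*, y_2^*\in\HB(\NA(X)\cap S_{X^*})$ with $q(y_1^*) = q(y_2^*) = x^*$, picking any $x_0\in S_X$ where $x^*$ attains its norm places both $y_1^*$ and $y_2^*$ in $\face(i(x_0))$ by the observation following (\ref{eq-qinv}), so injectivity on that face yields $y_1^* = y_2^*$. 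The main technical point is the global inverse-continuity: the union of the faces $\face(i(x))$ need not be $w^*$-compact, so the compact-to-Hausdorff shortcut fails on the whole domain, and the norm-unity trick in paragraph two (exploiting $\|y^*\|\le 1$ and $\|q(y^*)\|=1$) is what makes the argument go through at the global level.
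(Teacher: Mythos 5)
Your proposal is correct and follows essentially the same route the paper intends: the paper records this proposition as a restatement of the preceding discussion and explicitly defers the homeomorphism part to "the argument in Proposition \ref{prop-U-homeo}", which is exactly the relatively-closed-sets argument via Lemma \ref{lemma-intersection} and the observation that $C\cap q^{-1}(\NA(X)\cap S_{X^*})=C\cap\HB(\NA(X)\cap S_{X^*})$ for $C\subset B_{Y^*}$ that you spell out. Your treatment of the ``in particular'' clause via the face identities $\face(i(x))=\HB(\face(x))$ and $q(\face(i(x)))=\face(x)$ likewise matches the paper's preceding discussion.
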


It is a crucial fact that, in presence of property wU, the $w^*$-$w^*$-homeo\-morphisms between faces are given by the restrictions of the linear operator $q$. This means that somehow the affine structure of the faces $\face(i(x_0))$ and $\face(x_0)$ is preserved (see Proposition \ref{prop-wu-and-facesT^*} below). It motivates the following definition and corollaries.  

\begin{definition}
Let $X$ be a Banach space and let $x\in S_X$. We say that $x$ is a {\bf  point of $n$-almost G\^ateaux smoothness} whenever the subdifferential of the norm at $x$ ---in our notation, $\face(x)$--- has affine dimension $n$. In particular, $0$-almost G\^ateaux smoothness is just G\^ateaux smoothness.
\end{definition}

\begin{corollary}
A wU-embedding sends points of $n$-almost G\^ateaux smoothness to points of $n$-almost G\^ateaux smoothness.
\end{corollary}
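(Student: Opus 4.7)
The plan is to invoke Proposition \ref{prop-wu-and-faces} to transfer the affine structure from $\face(x)$ to $\face(i(x))$ via the restriction operator $q$. Suppose $X$ is wU-embedded in $Y$ by $i\colon X\to Y$, and let $x\in S_X$ be a point of $n$-almost Gâteaux smoothness, so that the convex set $\face(x)\subset S_{X^*}$ has affine dimension $n$. I want to show $\face(i(x))\subset S_{Y^*}$ has the same affine dimension.

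By Proposition \ref{prop-wu-and-faces}, the wU property gives that $q|_{\face(i(x))}\colon \face(i(x))\to \face(x)$ is a bijection (in fact a $w^*$-$w^*$-homeomorphism, but for this statement only bijectivity is needed). The crucial point, already emphasized in the paragraph preceding the definition, is that this bijection is the restriction of the linear map $q\colon Y^*\to X^*$; hence it is \emph{affine}. An affine bijection between convex subsets of vector spaces preserves affine dimension: if $q$ maps $\face(i(x))$ onto $\face(x)$, then $\dim\mathrm{aff}(\face(i(x)))\ge \dim\mathrm{aff}(\face(x))=n$, while conversely, since $q$ is linear and injective on $\face(i(x))$ (indeed on the whole affine hull, as its kernel intersects the difference set $\face(i(x))-\face(i(x))$ trivially by the wU property), we get $\dim\mathrm{aff}(\face(i(x)))\le n$.

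Therefore $\face(i(x))$ has affine dimension exactly $n$, so $i(x)$ is a point of $n$-almost Gâteaux smoothness in $Y$, as required. The argument is essentially a one-liner once one recognizes two facts that have already been set up in the paper: that $q$ is linear, and that under wU the restriction to $\face(i(x))$ is a bijection onto $\face(x)$. The only mild subtlety, and the step I would be most careful about, is the injectivity on the affine hull: bijectivity of an affine map on a convex set $C$ forces injectivity on $\mathrm{aff}(C)$ only because $\ker q$ is a linear subspace, so if it met $C-C$ trivially it meets $\mathrm{aff}(C)-\mathrm{aff}(C)=\sp(C-C)$ trivially as well; this is what upgrades the set-theoretic bijection on the face to an affine isomorphism of affine hulls, and thus equality of affine dimensions.
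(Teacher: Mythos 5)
Your argument is correct and is exactly the paper's (implicit) one: the corollary is stated there without a separate proof, as an immediate consequence of the preceding observation that under property wU the map $q|_{\face(i(x))}$ is a bijection onto $\face(x)$ given by the restriction of the linear operator $q$, hence an affine isomorphism of the faces preserving affine dimension. Your extra care about upgrading injectivity on the convex face to injectivity on its affine hull is sound (and could even be shortened, since an affine map injective on a convex set already carries affinely independent points to affinely independent points), but it does not change the route.
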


%Indeed, we can precise a bit more. As we said above ---see Remark \ref{rem-q1}---we said that when $i$  is an isometry then $q$ sends extreme points into extreme points. So, it is not just the affine dimension of the faces, but even the amount of extreme points. This is remarkable, since even in a 2D example, a circle and a square have the same affine dimension but not a very different number of extreme points.

Indeed, we can be a bit more precise. As we said above, it is not just the affine dimension of the faces, but the whole affine structure which is preserved. This is remarkable, since even when $n=2$ (2-dimensional faces), we can find faces that are circles, squares or triangles. Those shapes have the same affine dimension but a very different extremal structure. Also, a special characteristic of the triangular shape is that every point in the triangle can be written as a unique convex combination of its extreme points. We will see that the triangular ---indeed, the simplicial--- case will play a relevant r\^ole throughout the document.

\begin{remark}
\label{rem:C(K)-simplex}
\rm
 For instance if $Y=C(K)$ for some compact Hausdorff space $K$, the face determined by an $n$-almost G\^ateaux smooth point would indeed be an $n$-simplex ---since the set of extreme points of $B_{C(K)^*}$ is affine independent. Therefore, we can only hope to U-embbed Banach spaces $X$ whose $n$-almost G\^ateaux smooth points  define faces linearly-convex isomorphic to an $n$-simplex ---In general, the face determined by an $n$-almost G\^ateaux smooth point could have affine dimension $n$ but don't even be a polytope. For $n=1$  there is no such ambiguity ---indeed, this leads to a necessary condition for U-embeddings in $C(K)$-spaces, as we will see later in Proposition \ref{prop-wU-simplexoid}. \eor
\end{remark}

Needless to say, if $X$ is a finite-dimensional subspace of a Banach space $Y$, wU-embedding and U-embedding of $X$ in $Y$ are identical notions.  Notice that when $X^*$ has dimension $1$, $B_{X^*}$ is a segment with two end points $x^*_0$ and $-x^*_0$ (those points are its faces), and the point $x_0$ that norms $x^*_0$ has a face in $Y^*$ that reduces to a point. Thus, we have the following statement.

\begin{corollary} \label{coro-one-dimension}
A Banach space $Y$ contains a U-embedded one-dimensional subspace if, and only if, $Y$ has at least a G\^ateaux-smooth point.
\end{corollary}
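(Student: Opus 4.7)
The plan is to observe that, when $X$ is one-dimensional, property U collapses to a statement about a single face of $B_{Y^*}$. Concretely, suppose $X$ is a one-dimensional subspace of $Y$, pick a unit vector $x_0 \in S_X$, and let $x_0^* \in S_{X^*}$ be the functional determined by $\la x_0, x_0^*\ra = 1$. Since every vector in $X$ is a scalar multiple of $x_0$, a functional $y^* \in Y^*$ restricts to $x_0^*$ on $X$ exactly when $\la x_0, y^*\ra = 1$. Combining this with the definitions of $\HB$ and $\face$ yields the key identification
\begin{equation*}
\HB(x_0^*) = \{y^* \in S_{Y^*} : \la x_0, y^*\ra = 1\} = \face(i(x_0)).
\end{equation*}

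With this identification, the forward direction is immediate: if $X$ is U-embedded in $Y$, then $\HB(x_0^*)$ is a singleton, hence so is $\face(i(x_0))$, which is precisely the condition for $i(x_0) \in S_Y$ to be a G\^ateaux-smooth point of $Y$.

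For the converse, start with a G\^ateaux-smooth point $y_0 \in S_Y$ and set $X := \sp\{y_0\}$. Define $x_0^* \in S_{X^*}$ by $x_0^*(y_0) = 1$; then $S_{X^*} = \{\pm x_0^*\}$ (using real scalars, as in the paragraph preceding the corollary). The identification above, applied with $y_0$ in the role of $i(x_0)$, gives $\HB(x_0^*) = \face(y_0)$, a singleton by hypothesis, and by symmetry $\HB(-x_0^*) = -\face(y_0)$ is also a singleton. So every element of $S_{X^*}$, and hence every element of $X^*$, has a unique Hahn--Banach extension to $Y$, making $X$ U-embedded in $Y$.

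I do not anticipate any technical obstacle: the whole statement reduces to unpacking the definitions. The only observation worth highlighting is the equality $\HB(x_0^*) = \face(i(x_0))$, which is a degenerate manifestation of the general relationship $\face(i(x_0)) = \HB(\face(x_0))$ recorded in the text, simplified by the fact that in the one-dimensional setting $\face(x_0) = \{x_0^*\}$ is already a singleton.
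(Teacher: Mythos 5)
Your proof is correct and follows essentially the same route as the paper, which likewise observes that for a one-dimensional subspace the two faces of $B_{X^*}$ are the singletons $\pm x_0^*$ and that the Hahn--Banach extensions of $x_0^*$ form exactly the face $\face(i(x_0))$ of $B_{Y^*}$, so uniqueness of extensions is precisely G\^ateaux smoothness at the norming point. Your write-up merely makes explicit the identification $\HB(x_0^*)=\face(i(x_0))$ that the paper leaves implicit in the paragraph preceding the corollary.
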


Indeed, the relation between wU property and G\^ateaux differentiability is even closer. Recall that Proposition \ref{prop-rotund-and-U} above reveals that having property U in every subspace of $Y$ is equivalent to strict convexity of $B_{Y^*}$. So, when we restrict the attention to the norm-attaining functionals, G\^ateaux smoothness appears as the wU counterpart of strict convexity on the dual (compare with Proposition \ref{prop-rotund-and-U}):

\begin{proposition}\label{prop-gateaux} Let $Y$ be a Banach space. Its norm is G\^ateaux differentiable if, and only if, every  subspace $X$ of $Y$ has property wU in $Y$.
\end{proposition}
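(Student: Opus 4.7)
The plan is to mirror the structure of the Taylor--Foguel characterization (Proposition~\ref{prop-rotund-and-U}), replacing strict convexity by G\^ateaux smoothness and general Hahn--Banach extensions by extensions of norm-attaining functionals. The equivalence then falls out of the observation, already noted in the excerpt via equation~(\ref{eq-qinv}), that every Hahn--Banach extension of a norm-attaining functional lies inside the face defined by the norming point.

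For the forward implication, I would fix a subspace $X\subset Y$ and an $x^*\in \NA(X)\cap S_{X^*}$, attained at some $x\in S_X$. By~(\ref{eq-qinv}), every $y^*\in\HB(x^*)$ belongs to $\face(i(x))$, and $\face(i(x))$ is exactly the subdifferential of the norm of $Y$ at $i(x)\in S_Y$. G\^ateaux differentiability of the norm at $i(x)$ says precisely that this subdifferential is a singleton, so $\HB(x^*)$ is a singleton. Hence $X$ has property wU in $Y$.

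For the reverse implication, I would argue by contrapositive and reduce to one-dimensional subspaces (in the spirit of Corollary~\ref{coro-one-dimension}). Suppose the norm of $Y$ fails to be G\^ateaux differentiable at some $y_0\in S_Y$; then $\face(y_0)$ contains at least two distinct functionals $y_1^*,y_2^*$. Take the one-dimensional subspace $X:=\sp\{y_0\}$ and the functional $x^*\in S_{X^*}$ determined by $x^*(y_0)=1$. Clearly $x^*\in \NA(X)$, attained at $y_0$. Every Hahn--Banach extension of $x^*$ is, by definition, an element of $S_{Y^*}$ taking the value $1$ at $y_0$, i.e.\ an element of $\face(y_0)$; conversely every member of $\face(y_0)$ restricts to $x^*$. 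Thus $\HB(x^*)=\face(y_0)$, which contains $y_1^*\neq y_2^*$, and wU fails for this $X$.

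There is no real obstacle: the whole argument is one paragraph in each direction once one recognizes that, for norm-attaining functionals, the Hahn--Banach extension problem is literally the problem of picking a point in $\face(i(x))$, so uniqueness of the extension is equivalent to G\^ateaux smoothness at the norming point. The only small point worth stating explicitly is that the one-dimensional reduction in the converse suffices, which parallels the way Proposition~\ref{prop-rotund-and-U} was proved via the codimension-one subspace $\{y\in Y:\langle y,y_1^*-y_2^*\rangle=0\}$.
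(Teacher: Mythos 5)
Your proof is correct. The forward implication is exactly the paper's argument: by (\ref{eq-qinv}), $\HB(x^*)\subset\face(i(x))$, and G\^ateaux smoothness at $i(x)$ makes that face a singleton. For the converse the paper simply says to ``repeat the argument in the proof of the Taylor--Foguel Proposition~\ref{prop-rotund-and-U}'', i.e.\ to take the codimension-one subspace $X=\{y\in Y:\langle y,y_1^*-y_2^*\rangle=0\}$ determined by two distinct points $y_1^*,y_2^*\in\face(y_0)$ and the functional $x_0^*=q(y_2^*)$; to make that work for wU one must additionally observe that $y_0\in X$ and norms $x_0^*$, so that $x_0^*\in\NA(X)$. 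Your one-dimensional reduction $X=\sp\{y_0\}$ avoids that extra verification entirely: the identity $\HB(x^*)=\face(y_0)$ is immediate, norm-attainment is automatic, and the counterexample subspace is as small as possible (consistent with Corollary~\ref{coro-one-dimension}). So your converse is a genuinely different, and arguably cleaner, construction; the paper's hyperplane route has the cosmetic advantage of literally reusing the Taylor--Foguel proof, which is the parallel the authors wanted to emphasize. Either way the substance is the same: for norm-attaining functionals, choosing a Hahn--Banach extension is choosing a point of $\face(i(x))$.
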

\begin{proof} Assume first that $\nn$ on $Y$ is G\^ateaux differentiable. Let $X$ be a subspace of $Y$. Given $x^*_0\in NA(X)\cap S_{X^*}$ that attains its norm at $x_0\in S_{X}$, we have ---see (\ref{eq-HB(x^*)}) and (\ref{eq-qinv}) above--- that
\begin{equation}\label{eq-inclusion}
\HB(x^*_0)=q^{-1}(x^*_0)\cap S_{Y^*}\subset \face(i(x_0)),
\end{equation}
The last set in (\ref{eq-inclusion}) is a singleton due to the G\^ateaux differentiability of the norm at $i(x_0)$. This shows the necessity.  For the sufficiency, it is enough to repeat the argument in the proof of the Taylor--Foguel Proposition \ref{prop-rotund-and-U}. 
\end{proof}

In Remark \ref{rem:C(K)-simplex} we could already appreciate one of the particularities when embedding in $C(K)$ spaces, which is the main topic of the paper. The structural behavior of this kind of spaces will lead us to develop some specific tools, but it is worth to note that without further development, we can recover one of Phelps' results for the $C(K)$ case.

\begin{corollary}[Phelps, {\cite[Theorem 3.2]{Phelps1960a}}] Let $K$ be a compact Hausdorff space, and let $X$ be a finite-dimensional subspace of $(C(K),\nn_{\infty})$. Let $\dim(X)=n$. Assume that $X$ has property U in $C(K)$. Then every $x^*$ in $S_{X^*}$ is the convex combination of at most $n$ elements $\pm q(\dd_k)$, $k\in K$. In particular, for every $x\in X$ it holds that $\|x\|=|x(k)|$ for at most $n$ points $k\in K$.
\end{corollary}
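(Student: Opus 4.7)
The plan is to combine property U with the simplicial structure of faces in $B_{C(K)^*}$. Three ingredients drive the argument. First, the extreme points of $B_{C(K)^*}$ are precisely $\{\pm\dd_k : k\in K\}$. Second, by Remark \ref{rem-q1}, property U forces every extreme point of $B_{X^*}$ to be of the form $q(\eta)$ for a unique $\eta\in \Ext B_{C(K)^*}$. Third, for each $x \in X$ with $\|x\|=1$, the face $\face(i(x))$ in $B_{C(K)^*}$ is a geometric simplex: writing $M_x := \{k\in K : |x(k)|=1\}$ and $\epsilon_k := \operatorname{sign}(x(k))$, one has $\face(i(x))=\co\{\epsilon_k\dd_k : k\in M_x\}$, and a standard Urysohn argument shows that any finite subset of $\{\pm\dd_k : k \in K\}$ without antipodal pairs is affinely independent.

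Given $x^*\in S_{X^*}$, finite-dimensional Hahn--Banach produces $x\in S_X$ with $\la x,x^*\ra = 1$, so $x^*\in \face(x)$. By Proposition \ref{prop-wu-and-faces}, the restriction $q|_{\face(i(x))}$ is an affine $w^*$-$w^*$-homeomorphism onto $\face(x)$. Hence $\face(x)$ is itself a simplex, with vertices $\{\epsilon_k q(\dd_k) : k\in M_x\}$; these are pairwise distinct, because any coincidence would yield two distinct Hahn--Banach extensions of a single functional, contradicting property U. The simplex $\face(x)$ lies in the affine hyperplane $\{z^*\in X^* : \la x,z^*\ra=1\}$ of dimension $n-1$, hence has at most $n$ vertices. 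Consequently $x^*\in \face(x)$ is a convex combination of at most $n$ elements of the form $\pm q(\dd_k)$.

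For the second assertion, given $x\in X$ one may assume $\|x\|=1$. The preceding analysis shows that $|M_x|$ equals the number of vertices of the simplex $\face(x)$, which is at most $n$; therefore $\|x\|=|x(k)|$ for at most $n$ points $k\in K$.

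The main obstacle is ensuring that $q$ does not collapse distinct vertices of $\face(i(x))$ into a single point of $\face(x)$; this is precisely where property U is indispensable, via the uniqueness clause in Remark \ref{rem-q1} (equivalently, the injectivity in Proposition \ref{prop-wu-and-faces}). Once this is secured, everything else is bookkeeping with affine dimension and the affine independence of Dirac measures.
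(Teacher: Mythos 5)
Your skeleton is the right one and matches the paper's: transport $\face(i(x))$ onto $\face(x)$ via the affine $w^*$-$w^*$-homeomorphism $q|_{\face(i(x))}$ of Proposition \ref{prop-wu-and-faces}, identify the extreme points of $\face(i(x))$ as signed Dirac deltas, and count inside the $(n-1)$-dimensional affine hyperplane $\{z^*\in X^*:\la x,z^*\ra=1\}$. The paper finishes with Minkowski--Carath\'eodory applied to the compact convex set $\face(x)$ (affine dimension at most $n-1$, hence every point is a convex combination of at most $n$ extreme points, and these extreme points are images of $\pm\dd_k$), whereas you finish by arguing that the face is a simplex via affine independence of non-antipodal Dirac measures. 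Your route is closer to Phelps's original and has the merit of making the ``in particular'' clause ($|M_x|\le n$) explicit, which the paper leaves to the reader.

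There is, however, one step that is stated out of order and is false as stated. Your ``third ingredient'' asserts, prior to any use of property U, that $\face(i(x))=\co\{\epsilon_k\dd_k:k\in M_x\}$ and is a geometric simplex. For infinite $M_x$ this fails: by Lemma \ref{lemma_faces_CK} the face is the $w^*$-\emph{closed} convex hull $\cco^{w^*}\{\epsilon_k\dd_k:k\in M_x\}$ (for $x\equiv 1$ one gets $M_x=K$ and the face is the whole set of probability measures on $K$), which is not a finite-dimensional simplex. The finiteness of $M_x$ is precisely what property U buys you, so it cannot be assumed at the outset; as written, your argument derives $|M_x|\le n$ from the simplex structure while the simplex structure presupposes $M_x$ finite. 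The repair is short: if $|M_x|\ge n+1$, choose $n+1$ points of $M_x$; the corresponding $\epsilon_k\dd_k$ span an $n$-dimensional simplex $\Delta\subset\face(i(x))$, and $q|_{\Delta}$ is affine and injective (by property U), so $q(\Delta)$ has affine dimension $n$ inside a hyperplane of dimension $n-1$, a contradiction. Hence $|M_x|\le n$, and only now is $\face(i(x))$ genuinely equal to $\co\{\epsilon_k\dd_k:k\in M_x\}$, a simplex with at most $n$ vertices; the rest of your argument then goes through. Note also that the pairwise distinctness of the images $\epsilon_kq(\dd_k)$, which is all you verify explicitly, is strictly weaker than the affine independence needed for the dimension count; the latter comes from injectivity of the affine map $q$ on the convex set $\face(i(x))$, not merely from injectivity on its extreme points.
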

\begin{proof} It is enough to apply Proposition \ref{prop-wu-and-faces} and the classical  Minkowski--Carath\'eodory result on the  convex hull of a finite-dimensional set, together with the fact that the set of  extreme points of $S_{C(K)^*}$ is $\{\pm\dd_{k}:\ k\in K\}$.
\end{proof}

The other main result for  $C(K)$ spaces provided by Phelps in \cite{Phelps1960a} is the following. Let $K$ be a compact space and $Z\subset K$ a closed subset. The {\bf closed ideal of $C(K)$ defined by $Z$} is the linear subspace $I_Z(K):= \{f\in C(K) : f|_{Z}=0\}\ (\subset C(K))$.

\begin{proposition}[Phelps, {\cite[Lemma 3.1]{Phelps1960a}}]
\label{prop:phelps-ideals}
Let $K$ be a compact space and $Z\subset K$ a closed subset. Then, the closed ideal  $I_Z(K)$ defined by $Z$ has property U in $C(K)$.
\end{proposition}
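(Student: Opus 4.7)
The plan is to work in the dual space identification $C(K)^* \cong M(K)$ given by the Riesz representation theorem, so that Hahn--Banach extensions correspond to a concrete uniqueness question about signed (regular Borel) measures and their total variation.

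The first step is to identify the annihilator $I_Z(K)^\perp$. I claim that
\[
I_Z(K)^\perp = \{\mu \in M(K) : |\mu|(K\setminus Z) = 0\},
\]
i.e., the measures concentrated on $Z$. One inclusion is immediate: if $|\mu|$ is concentrated on $Z$ and $f\in I_Z(K)$, then $\int f\, d\mu=0$. For the other inclusion, suppose $\mu\in I_Z(K)^\perp$ and let $U\subset K\setminus Z$ be open with compact closure in $K\setminus Z$; by Urysohn's lemma there exist functions in $I_Z(K)$ (i.e., continuous on $K$, vanishing on $Z$) approximating the characteristic function of any compact subset of $K\setminus Z$ from above, and regularity of $\mu$ then forces $|\mu|(K\setminus Z)=0$.

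The second step is to pick a canonical norming extension. Given $x^*\in S_{I_Z(K)^*}$, take any $\nu\in M(K)$ representing $x^*$, and define $\mu$ to be the restriction of $\nu$ to the Borel subsets of $K\setminus Z$ (extended as zero on $Z$). Then $\nu-\mu$ is concentrated on $Z$, hence belongs to $I_Z(K)^\perp$, so $\mu$ also represents $x^*$; moreover, because $\mu$ and any $\lambda\in I_Z(K)^\perp$ are concentrated on the disjoint sets $K\setminus Z$ and $Z$, one has
\[
\|\mu+\lambda\|_{M(K)} = |\mu+\lambda|(K\setminus Z)+|\mu+\lambda|(Z) = \|\mu\|+\|\lambda\|.
\]
In particular $\|\mu\|\le\|\nu\|$, so $\mu$ is already a Hahn--Banach extension, and $\|\mu\|=\|x^*\|$.

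The third step delivers uniqueness essentially for free from the same additivity: if $\mu'=\mu+\lambda$ is another Hahn--Banach extension of $x^*$, then $\lambda\in I_Z(K)^\perp$ and $\|\mu+\lambda\|=\|\mu\|$, so the identity $\|\mu\|+\|\lambda\|=\|\mu\|$ forces $\lambda=0$. Therefore the extension is unique, and $I_Z(K)$ has property U in $C(K)$. The only real subtlety in the whole argument is the first step, the computation of $I_Z(K)^\perp$, which rests on regularity of Borel measures combined with Urysohn's lemma; once that identification is in hand, uniqueness reduces to the well-known fact that total variation is additive over disjointly supported measures.
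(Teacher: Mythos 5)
Your proof is correct. Note that the paper itself does not prove this proposition: it defers to Phelps' original argument and remarks that the statement now follows from M-ideal theory (closed ideals are exactly the M-ideals of $C(K)$, and M-ideals have property U). What you have written is precisely the concrete content of that remark: your Step 1 identifies $I_Z(K)^\perp$ with $M(Z)$, and your Step 2 exhibits the $\ell_1$-decomposition $M(K)=M(Z)\oplus_1 M(K\setminus Z)$, which is the defining property of an M-ideal; your Step 3 is the standard one-line deduction of property U from such a decomposition (a norm-preserving extension cannot carry a nonzero component in the $\ell_1$-complemented annihilator). Two small points you may want to tighten: in the reverse inclusion of Step 1, after obtaining $\mu(C)=0$ for every compact $C\subset K\setminus Z$ you should invoke inner regularity once more to get $\mu(E)=0$ for every Borel $E\subset K\setminus Z$ before concluding $|\mu|(K\setminus Z)=0$ (or, alternatively, use the ideal property to multiply the Urysohn function by arbitrary $g\in C(K)$ and conclude $f\,d\mu=0$ as a measure); and in Step 2 it is worth saying explicitly that $\|x^*\|=\inf_{\lambda\in I_Z(K)^\perp}\|\mu+\lambda\|=\|\mu\|$, so that $\mu$ is indeed norm-preserving. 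Neither is a gap, just bookkeeping.
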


Phelps provided a straightforward proof of Proposition \ref{prop:phelps-ideals}. Nowadays, it can be obtained as a consequence of the study of M-ideals, since those have property U, and the M-ideals in $C(K)$ spaces are precisely the closed ideals of $C(K)$ ---see \cite{Harmand1993}.

\section{U- and wU-embeddings}\label{sec:U-embeddings}
Provided a Banach space $X$, we wonder about the existence of an embedding from $X$ to a Banach space $Y$ whose range has property U in $Y$. Motivated by that, we introduce the following definition:

\begin{definition}
Let $X$ and $Y$ be Banach spaces. Let $T\colon  X\to Y$ be a linear isometry into. Given $y^*\in Y^*$ and $x^*\in X^*$, we say that $y^*$ is an \textbf{extension of 
$\boldsymbol{x^*}$ through $\boldsymbol{T}$} whenever $T^*(y^*)=x^*$. 
If additionally $\|y^*\|=\|x^*\|$ we will say that {\bf $y^*$ is a Hahn-Banach 
extension of $x^*$ (through $T$)}. We say that $T$ is a \textbf{U-embedding} 
whenever every $x^*\in S_{X^*}$ has a unique Hahn--Banach extension through 
$T$. 
\end{definition}

The definition above is motivated by the fact that whenever $T^*(y^*)=x^*$, then  $y^*$ is an actual extension to $Y$ of the functional $f$ defined on $T(X)$ by $f(T(x))=\langle x,x^*\rangle$ for every $x\in X$. Indeed, 
\begin{equation}
\la y^*,T(x)\ra= (y^* \circ T)(x)= T^*(y^*)(x)=\la x^*,x\ra = f(T\big(x)\big).
\end{equation}
In this case, we say that $f$ {\bf represents $x^*$ throughout} $T$.

An equivalent definition of U-embedding would be a linear isometry into ${T\colon X\to Y}$ whose range has property U in $Y$. Notice also that, provided a linear isometry $T\colon X\to Y$, it is natural to identify $X$ with the range of the map $T(X)\subset Y$, and think of $X$ as a linear subspace of $Y$. Formally, we have $X \xrightarrow{T} T(X)\xrightarrow{i} Y $. Thus, defining $\phi\colon X \to Y$ as $ i\circ T$, we have $\phi^*=T^* \circ i^*=T^* \circ q$. As $T$ is an isometric isomorphism,
% \sout{it is easy to see that all the unique extension properties (as they are clearly of isometric nature) of $T$ and $T^*$ are the same properties of the inclusion $i$ and $q$, respectively. Thus, in conclusion,}
all the results provided in the previous section for the inclusion and its adjoint are still valid to any linear isometry and its adjoint, and viceversa.

\medskip

Following with the notation, provided $T\colon X\to Y$ a linear isometry and given $x^*\in X^*$, we denote the set of its Hahn--Banach extensions through $T$ by 
\begin{equation*}
    \HB_T (x^*):= (T^*)^{-1}(x^*) \cap \|x^*\| S_{Y^*} = ((T^*)^{-1}(x^*) + T(X)^\perp)\cap \|x^*\| S_{Y^*}.
\end{equation*}

Obviously, $T\colon X\to Y$ is a U-embedding if and only if $\HB_T(x^*)$ is a singleton for every $x^*\in X^*$. All the other concepts and those related to the wU-embeddings are defined analogously as in the previous section.

\begin{remark}\label{rem:HB-symm}
\rm
Analogously to the previous section, given a linear isometry $T\colon X\to Y$ and a subset $A\subset X^*$, we define its set of Hahn--Banach extensions as:
\begin{equation*}
    \HB_T(A):= \{y^*\in Y^* : y^*\in \HB_T(x^*) \text{ for some }x^*\in A \}.
\end{equation*}
Despite being almost trivial, it is worth to state that if the set $A\subset X^*$ is symmetric, so it is $\HB_T(A)$.
The symmetry arguments will play a key r\^ole  when embedding into $C(K)$ spaces.\eor 
\end{remark}

 For later reference, we state the isometric version of Propositions \ref{prop-U-homeo} and \ref{prop-wu-and-faces}, both having analogous proofs to the original ones.

\begin{proposition}\label{prop:U_homeoT^*}
 Let $T\colon X\to Y$ be a continuous linear mapping. Then, $T$ is a U-embedding if, and only if,  ${T^*}|_{\HB_T(S_{X^*})}\colon  \HB_T(S_{X^*}) \to S_{X^*}$ is a $w^*$-$w^*$-homeomorphism.
\end{proposition}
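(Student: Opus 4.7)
The plan is to carry over the proof of Proposition \ref{prop-U-homeo} almost verbatim, substituting $q$ by $T^*$ throughout. The adjoint $T^*\colon Y^*\to X^*$ is $w^*$-$w^*$-continuous as the adjoint of a continuous linear mapping. Surjectivity of $T^*|_{\HB_T(S_{X^*})}$ onto $S_{X^*}$ is guaranteed by the Hahn--Banach theorem, while injectivity is precisely the definition of $T$ being a U-embedding. Thus the $w^*$-$w^*$-continuity of the direct map and its bijectivity require no further work, and only the continuity of the inverse (which exists under the U-embedding hypothesis) demands attention.

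For this, I would show that $T^*$ carries $w^*$-relatively closed subsets of $\HB_T(S_{X^*})$ to $w^*$-relatively closed subsets of $S_{X^*}$. Given such a relatively closed set $B$, write $B=C\cap \HB_T(S_{X^*})$ with $C\subseteq B_{Y^*}$ a $w^*$-closed (hence $w^*$-compact) set. Since $T$ is an isometry into, $\|T^*(y^*)\|\le \|y^*\|\le 1$ for every $y^*\in B_{Y^*}$, so any $y^*\in C$ with $T^*(y^*)\in S_{X^*}$ automatically satisfies $\|y^*\|=1$. Consequently,
\[
C\cap (T^*)^{-1}(S_{X^*})=C\cap \HB_T(S_{X^*})=B.
\]
Lemma \ref{lemma-intersection} then yields $T^*(B)=T^*(C)\cap S_{X^*}$, and $T^*(C)$ is $w^*$-compact as the continuous image of a $w^*$-compact set, hence $w^*$-closed.

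The main (and essentially only) obstacle I foresee is bookkeeping: making sure that the set-theoretic identity $\HB_T(S_{X^*})=(T^*)^{-1}(S_{X^*})\cap S_{Y^*}$ is used correctly when invoking Lemma \ref{lemma-intersection}. This is precisely where the isometric-into hypothesis on $T$ enters, since it is what forces any norm-preserving preimage to sit on $S_{Y^*}$. Alternatively, one can bypass the calculation entirely by appealing to the factorization $T=i\circ T_0$ already noted before the statement, where $T_0\colon X\to T(X)$ is a surjective isometry. Its adjoint $T_0^*$ is a $w^*$-$w^*$-homeomorphism $T(X)^*\to X^*$ mapping $S_{T(X)^*}$ onto $S_{X^*}$, so Proposition \ref{prop-U-homeo} applied to the inclusion $i\colon T(X)\hookrightarrow Y$ yields the present statement after composing with $T_0^*$.
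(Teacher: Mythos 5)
Your proposal is correct and matches the paper's intent exactly: the paper omits the proof, stating only that it is analogous to that of Proposition \ref{prop-U-homeo}, which is precisely the substitution of $T^*$ for $q$ that you carry out, including the key identity $C\cap (T^*)^{-1}(S_{X^*})=C\cap \HB_T(S_{X^*})$ for $C\subseteq B_{Y^*}$ and the appeal to Lemma \ref{lemma-intersection}. Your alternative via the factorization $T=i\circ T_0$ is also valid and is essentially the identification the paper itself makes in Section \ref{sec:U-embeddings} when it notes that results for inclusions transfer to arbitrary isometries.
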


\begin{proposition}\label{prop-wu-and-facesT^*}
Let $T\colon X\to Y$ be a linear isometry. Then, $T$ is a wU-embedding if, and only if, ${T^*}_{|\face(T(x))}$ is a $w^*$-$w^*$-homeomorphism from $\face(T(x))$ onto $\face(x)$, for every $x\in S_X$.
\end{proposition}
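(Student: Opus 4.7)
The plan is to reduce to the inclusion case already treated in Proposition \ref{prop-wu-and-faces}, using the factorization $T = i\circ \tilde T$ sketched immediately before the statement, where $\tilde T\colon X\to T(X)$ is the surjective isometric isomorphism induced by $T$ and $i\colon T(X)\hookrightarrow Y$ is the inclusion. This yields $T^* = \tilde T^*\circ q$ with $q\colon Y^*\to T(X)^*$ the restriction and $\tilde T^*\colon T(X)^*\to X^*$ an isometric (and $w^*$-$w^*$) isomorphism. A trivial check shows that for every $x\in S_X$ one has $\tilde T^*(\face(T(x))) = \face(x)$, where the face on the left is taken inside $T(X)^*$, so the isometric identification matches faces with faces and the desired equivalence follows from Proposition \ref{prop-wu-and-faces} applied to the subspace $T(X)\subset Y$.

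Alternatively, one may mimic the proof of Proposition \ref{prop-wu-and-faces} directly. Fix $x\in S_X$. For any $x^*\in \face(x)$ and any Hahn--Banach extension $y^*\in \HB_T(x^*)$, one has $\la T(x),y^*\ra = \la x,T^*(y^*)\ra = \la x,x^*\ra = 1$ together with $\|y^*\| = \|x^*\| = 1$, hence $y^*\in \face(T(x))$. Thus $T^*$ maps $\face(T(x))$ into $\face(x)$, and the map is surjective, since every $x^*\in \face(x)$ is norm-attaining at $x$ and therefore admits at least one Hahn--Banach extension, which by the previous line lies in $\face(T(x))$.

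For the forward implication, if $T$ is a wU-embedding then uniqueness of Hahn--Banach extensions for each norm-attaining $x^*\in\face(x)$ forces $T^*\colon \face(T(x))\to \face(x)$ to be injective. For the converse, assume the restriction is a bijection for every $x\in S_X$. Any $x^*\in \NA(X)\cap S_{X^*}$ is attained at some $x_0\in S_X$, so $x^*\in \face(x_0)$; all its Hahn--Banach extensions lie in $\face(T(x_0))$ by the computation above, and the assumed injectivity yields uniqueness, which is exactly the wU property of $T$.

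Finally, to upgrade the bijection to a $w^*$-$w^*$-homeomorphism, observe that $\face(T(x))$ is $w^*$-closed in the $w^*$-compact ball $B_{Y^*}$, hence itself $w^*$-compact, while $\face(x)$ is $w^*$-Hausdorff; as $T^*$ is always $w^*$-$w^*$-continuous, any continuous bijection from a compact space onto a Hausdorff space is automatically a homeomorphism. This step is markedly simpler than in Proposition \ref{prop-U-homeo}, where the domain $\HB(S_{X^*})$ need not be $w^*$-compact; here the compactness of faces bypasses that difficulty. I do not anticipate a genuine obstacle beyond the routine bookkeeping of the face identification under $\tilde T^*$ if one prefers the reduction route.
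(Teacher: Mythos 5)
Your proposal is correct and follows essentially the route the paper intends: the paper omits the proof, declaring it ``analogous'' to Proposition \ref{prop-wu-and-faces} via the identification $T=i\circ\tilde T$, which is exactly your first reduction, and your second, direct argument just spells out that analogy. Your observation that the homeomorphism step can be settled by noting that $\face(T(x))$ is $w^*$-compact (so a continuous bijection onto the Hausdorff set $\face(x)$ is automatically a homeomorphism) is a legitimate and slightly cleaner alternative to the closed-map argument via Lemma \ref{lemma-intersection} that the paper invokes from Proposition \ref{prop-U-homeo}.
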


We will end this section by making some comments about composition of U-embeddings. The following result has an almost trivial proof, that we shall omit.

\begin{lemma}\label{lemma-chain} Let $i\colon X\to Y$ and $j\colon Y\to Z$ be isometries into. Assume that $j$ is a U-embedding. Then, the composition $j\circ i$ is a U-embedding if, and only if,  $i$ is a U-embedding.
\end{lemma}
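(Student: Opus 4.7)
My plan is to prove both directions simultaneously by establishing a natural bijection between the sets of Hahn--Banach extensions $\HB_{j\circ i}(x^*)$ and $\HB_i(x^*)$ for every fixed $x^*\in S_{X^*}$. Once such a bijection is produced, the equivalence follows immediately since U-embedding of a map $T$ is just the statement that $\HB_T(x^*)$ is a singleton for every $x^*\in S_{X^*}$.

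The candidate bijection is $\Phi\colon \HB_{j\circ i}(x^*)\to \HB_i(x^*)$, $z^*\mapsto j^*(z^*)$. To see it is well defined, I would use $(j\circ i)^* = i^*\circ j^*$: if $z^*\in \HB_{j\circ i}(x^*)$, then $i^*(j^*(z^*))=x^*$, and the norm chain
\[
\|x^*\| = \|i^*(j^*(z^*))\| \le \|j^*(z^*)\|\le \|z^*\|=\|x^*\|
\]
(using that $i^*$ and $j^*$ are quotient maps of norm $1$ since $i$ and $j$ are isometries into) forces $\|j^*(z^*)\|=\|x^*\|$, so $j^*(z^*)\in \HB_i(x^*)$.

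For the inverse, given $y^*\in \HB_i(x^*)$ I would exploit the hypothesis that $j$ is a U-embedding: the set $\HB_j(y^*)$ is a singleton, so there is a unique $z^*\in Z^*$ with $j^*(z^*)=y^*$ and $\|z^*\|=\|y^*\|=\|x^*\|$. Then $(j\circ i)^*(z^*)=i^*(y^*)=x^*$, so $z^*\in \HB_{j\circ i}(x^*)$. This defines $\Psi\colon \HB_i(x^*)\to \HB_{j\circ i}(x^*)$, and the identities $\Phi\circ\Psi=\mathrm{id}$ and $\Psi\circ\Phi=\mathrm{id}$ are straightforward: the first is by construction of $\Psi$, and the second uses again the uniqueness clause in the U-embedding hypothesis on $j$ applied to $y^*:=j^*(z^*)$.

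I do not foresee a serious obstacle here; the only place the hypothesis on $j$ is genuinely needed is to build $\Psi$ and to verify $\Psi\circ\Phi=\mathrm{id}$ via uniqueness, which is precisely what makes the direction ``$i$ U-embedding $\Rightarrow$ $j\circ i$ U-embedding'' work. The reverse direction could in fact be proved without using the U-embedding hypothesis on $j$, since $\Phi$ is always well defined and surjective onto $\HB_i(x^*)$ (any $y^*\in \HB_i(x^*)$ admits at least one Hahn--Banach extension through $j$ by the classical Hahn--Banach theorem), so if $\HB_{j\circ i}(x^*)$ is a singleton, then $\HB_i(x^*)$ is as well.
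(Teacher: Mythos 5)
Your argument is correct: the bijection $\Phi\colon \HB_{j\circ i}(x^*)\to \HB_i(x^*)$, $z^*\mapsto j^*(z^*)$, with inverse supplied by the uniqueness clause of the U-embedding hypothesis on $j$, is precisely the ``almost trivial'' proof that the paper omits, and every step (the norm chain using $\|i^*\|=\|j^*\|=1$, the Hahn--Banach theorem to make $\Psi$ well defined, and uniqueness to get $\Psi\circ\Phi=\mathrm{id}$) checks out. Your closing observation that the implication ``$j\circ i$ U-embedding $\Rightarrow i$ U-embedding'' requires no hypothesis on $j$ also agrees with the paper's remark immediately following the lemma.
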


Notice that if $j\circ i$ is a U-embedding, so it is $i$; however, it is not possible to conclude that $j$ should be a U-embedding, too (arguing in terms of subspaces, an element $y^*$ can have two distinct Hahn--Banach extensions to $Z$ that coincide on $X$). In the other direction, the previous lemma concludes that if $i$ and $j$ are both U-embeddings, so it is $j\circ i$.

\section{Isometries into $C(K)$ and first consequences}
\label{subsect-intro-c(K)}

\subsection{A specific set-up for the $C(K)$ case}

Let $K$ be a compact Hausdorff topological space. We shall consider $C(K)$, the space of all continuous real-valued functions defined on $K$, endowed with its maximum norm $\nn_{\infty}$. Put $M(K):=C(K)^*$. The space $M(K)$ consists of all regular Borel measures on $K$. The norm $\nn_{\infty}^*$ on $M(K)$ is the total variation of a measure. For $k\in K$, let $\dd_k^K\in M(K)$ be the corresponding Dirac delta. Define a mapping $\delta^K\colon K\to M(K)$ as $\delta^K(k)=\delta_k^K$ for $k\in K$. Then $\delta K:=\dd^K(K)=\{\dd_k^K: k\in K\}$ in its $w^*$-topology is homeomorphic to $K$. We shall often identify the two topological spaces $K$ and $(\dd K,w^*)$. We shall write $\dd_k$  instead of $\dd^{K}_{k}$ (and $\dd$ instead of $\dd^K$) if there is no risk of misunderstanding.

We are mainly concerned with those closed subspaces of $C(K)$ having property U (or wU)  in $C(K)$. Our approach here focuses however in those linear isometric embeddings $T\colon X\to C(K)$ for which $T(X)$ has property U (or wU) in $C(K)$. The following observation inspired the majority of this section.

\begin{remark}\label{rem-C-of-the-dual-ball}
\rm A Banach space can be naturally embedded in a space of the form $C(K)$ (namely, $X$ is linearly isometric to a subspace of $C((B_{X^*},w^*))$ by means of the mapping $j\colon X\rightarrow C((B_{X^*},w^*))$ given by $j(x):=x|_{B_{X^*}}$ for all $x\in X$). The question on U-embeddability  translates, in particular, to this case. However, this is just the uninteresting one, since $X$ has not property U (neither property wU) in $C((B_{X^*},w^*))$, even in a strong way: no $x^*\in S_{X^*}$ has a unique Hahn--Banach extension to $C((B_{X^*},w^*))$. The reason is that, in case $x^*\in S_{X^*}$, both $\dd_{x^*}$ and $-\dd_{-x^*}$ are clearly two distinct Hahn--Banach extensions of $x^*$ to $C((B_{X^*},w^*))$.\eor
\end{remark}

\medskip

 As Remark \ref{rem-C-of-the-dual-ball} above shows, the ``canonical'' way of embedding a Banach space into a $C(K)$ never allows $X$ to have unique extensions to it. However, there are many other ways to find an embedding from $X$ into a $C(K)$ space. We will appeal ---see the next paragraph--- to a classical result that correlates bounded linear operators from $X$ to $C(K)$ and continuous maps from $K$ to $X^*$. 

\medskip

It is classical ---see \cite[Theorem VI.7.1]{ds}--- that given a Banach space $X$, there exists a linear  isometry %$\varphi$
from $B(X,C(K))$ (i.e., the Banach space of bounded linear operators from $X$ to $C(K)$), onto $C(K,(X^*,w^*))$. Precisely, if ${T\colon X\rightarrow C(K)}$ is a continuous linear mapping, let 
\begin{equation}
\label{eq-varphi}
\boxed{F_T:=T^*\circ \delta^K:K\rightarrow X^*.} 
\end{equation}
On the other hand, if $F\colon K \rightarrow (X^*,w^*)$ is a continuous map, put

\begin{equation}\label{eq-phi}
%\phi_{F}
\big(T_F(x)\big)(k):= \langle x,F(k)\rangle, \text{ for $x \in X$ and $k\in K$}.
\end{equation}
Thus, 
$$
\boxed{
T_F:X\rightarrow C(K).}
$$
Notice that $F_{T_F}=F$.

Observe that $\|F_T\|_{\infty}=\sup_{\dd  K}T^*=\sup_{B(M(K))}T^*=\|T^*\|=\|T\|$, due to the fact that $T^*$ is $w^*$-$w^*$-continuous and that $B_{M(K)}=\cco^{w^*}(\pm\dd K$) (where $\pm\dd K:=\dd K\cup(-\dd K)$). Thus, the map %$\varphi$
$T \mapsto F_T$ is indeed an isometric isomorphism from $B(X,C(K))$ onto $C(K,(X^*,w^*))$.

\medskip

Let $X$ be a Banach space. A subset $\JB\subset S_{X^*}$ is said to be a {\bf James boundary for $X$} if every $x\in S_X$ attains its norm at some point of $\JB$. For example, the set $\Ext B_{X^*}$ of all extreme points of $B_{X^*}$, is a James boundary for $X$. The Separation Theorem shows that $\cco^{w^*}(\JB)=B_{X^*}$ for every James boundary $\JB$. A subset $N$ of $B_{X^*}$ is said to be {\bf $1$-norming} if $\sup_{x^*\in N}|\langle x,x^*\rangle|=1$ for every $x\in S_X$. Clearly, every James boundary is $1$-norming, and every $1$-norming set is $w^*$-linearly dense in $X^*$.

A simple observation is that, 
% q version:
%in case that $X$ is a closed subspace of $Y$ and $\JB\subset S_{Y^*}$ is a James boundary for $Y$, then $q(\JB)\cap S_{X^*}$ is a James boundary for $X$. If $N$ is a $1$-norming subset of $B_{Y^*}$, then $q(N)$ is a $1$-norming subset of $B_{X^*}$.
in case that $T\colon X \to Y$ is an isometry into and $\JB\subset S_{Y^*}$ is a James boundary for $Y$, then $T^*(\JB)\cap S_{X^*}$ is a James boundary for $X$. If $N$ is a $1$-norming subset of $B_{Y^*}$, then $T^*(N)$ is a $1$-norming subset of $B_{X^*}$.

\begin{lemma}\label{lemma-isometry} Let $K$ be a compact Hausdorff space, and let $X$ be a Banach space.
Let $F\in C(K,(X^*,w^*))$, and let $T_F:X\rightarrow C(K)$ be its associated operator defined in {\em (\ref{eq-phi})}. Then, the following are equivalent:
\begin{enumerate}[\rm (i)]
    \item $T_F$ is an isometry into.
    \item $ F(K) \cup -F(K) \ (\subset B_{X^*})$ is a James boundary for $X$.
    \item $F(K) \ (\subset B_{X^*})$ is $1$-norming.
    \item %The set of extreme points of $B_{X^*}$ is contained in $ F(K)\cup -F(K) \subset B_{X^*}$, i.e.,
   $\Ext B_{X^*} \subset  F(K)\cup -F(K) \subset B_{X^*}$.
\end{enumerate}

\end{lemma}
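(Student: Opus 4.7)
The plan is to exploit the basic identity
\begin{equation*}
\|T_F(x)\|_\infty \;=\; \sup_{k\in K} |\la x, F(k)\ra| \;=\; \sup_{x^*\in F(K)\cup -F(K)} \la x, x^*\ra, \quad x\in X,
\end{equation*}
which follows directly from the definition of $T_F$ together with the observation that passing from $\sup$ to $\sup|\cdot|$ amounts to symmetrizing. All four equivalences will reduce to this identity plus two standard convex-geometric tools.

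First I would deduce (i) $\Leftrightarrow$ (iii). If $T_F$ is an isometry, then $\|F\|_\infty=\|T_F\|=1$ by the isometric identification $T\mapsto F_T$ recalled just before the lemma, hence $F(K)\subset B_{X^*}$; the identity then gives $\sup_{x^*\in F(K)}|\la x,x^*\ra|=\|x\|$ for every $x\in S_X$, i.e.\ $F(K)$ is $1$-norming. Conversely, $1$-norming plus $F(K)\subset B_{X^*}$ forces $\|T_F(x)\|_\infty=\|x\|$ by the same identity. For (ii) $\Leftrightarrow$ (iii): a James boundary is trivially $1$-norming, and symmetry lets one move between $F(K)$ and $F(K)\cup -F(K)$ freely. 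For the reverse, the map $k\mapsto\la x,F(k)\ra$ is a continuous function on the compact space $K$, so its maximum modulus is actually attained at some $k_0\in K$, producing an element of $F(K)\cup -F(K)$ at which $x\in S_X$ attains its norm.

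The more substantive step is (iii) $\Leftrightarrow$ (iv). For (iii) $\Rightarrow$ (iv), set $A:=F(K)\cup -F(K)$; it is $w^*$-compact (being the union of two continuous images of the compact $K$), symmetric, and sits in $B_{X^*}$. Being $1$-norming together with symmetry gives $\sup_{x^*\in A}\la x,x^*\ra=\|x\|$ for all $x$, so $\cco^{w^*}(A)=B_{X^*}$ by Hahn--Banach separation; Milman's theorem then yields $\Ext B_{X^*}\subset A$, which is exactly (iv). For (iv) $\Rightarrow$ (iii), Krein--Milman gives $B_{X^*}=\cco^{w^*}(\Ext B_{X^*})$, and Bauer's maximum principle shows that the $w^*$-continuous linear functional $\la x,\cdot\ra$ attains its supremum on $B_{X^*}$ at an extreme point, so $\Ext B_{X^*}$ is $1$-norming; hence so is $F(K)\cup -F(K)$, and by the symmetry of $|\la x,\cdot\ra|$ also $F(K)$.

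The step I expect to be the main obstacle is the passage from $1$-norming to containment of the extreme points in (iii) $\Rightarrow$ (iv): Milman's theorem needs a $w^*$-closed set whose closed convex hull fills out $B_{X^*}$, and $F(K)$ alone is in general not symmetric. The symmetrization $A=F(K)\cup -F(K)$ is exactly what makes the argument go through, and this is precisely the reason the statements in (ii) and (iv) involve $F(K)\cup -F(K)$ rather than $F(K)$ itself.
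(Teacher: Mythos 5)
Your proposal is correct and follows essentially the same route as the paper: the substantive steps ---symmetrizing to $F(K)\cup -F(K)$, applying Milman's converse of Krein--Milman for (iii)$\Rightarrow$(iv), and using that $\Ext B_{X^*}$ is $1$-norming for (iv)$\Rightarrow$(iii)--- are exactly those in the paper's proof. The only (cosmetic) difference is in how (ii) is reached: the paper deduces it from the fact that the adjoint of an isometry carries the James boundary $\pm\dd K$ of $C(K)$ to a James boundary of $X$, whereas you obtain norm-attainment directly from the compactness of $K$ and the $w^*$-continuity of $F$; both arguments are valid.
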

\begin{proof}
(i)$\Rightarrow$(ii) If (i) holds, then $(T_{F})^*$ is a quotient mapping. Its restriction to $\dd K$ is just $F$ (see equation (\ref{eq-varphi}) above). Since $\dd K\cup (-\dd K)$ is a James boundary for $C(K)$, previous observations show that $ F(K)\cup -F(K)$ is a James boundary for $X$.

(ii)$\Rightarrow$(iii) is always true.

(iii)$\Rightarrow$(iv) is a consequence of the fact that $\cco^{w^*}( F(K)\cup -F(K))=B_{X^*}$ and Milman's converse to the classical Krein--Milman Theorem (see, e.g., \cite[Theorem 3.66]{y2}).

(iii)$\Rightarrow$(i) is clear, and (iv)$\Rightarrow$(iii) follows from the fact that $\Ext B_{X^*}$ is $1$-norming.
\end{proof}

\begin{remark}\label{rem-core}\rm 
Let $T:X\rightarrow C(K)$ be a linear isometric embedding. Put $K_0:=F_T(K)\ (\subset B_{X^*})$, and endow it with the restriction of the $w^*$-topology. According to Lemma \ref{lemma-isometry}, $\pm K_0$ is a James boundary for $X$. In particular  $\cco^{w^*}(\pm K_0)=B_{X^*}$ (after all, $\Ext(B_{X^*})\subset \pm K_0$, and $K_0$ is compact). Put $C(f):=f\circ F_T$ for $f\in C(K_0)$. This defines a linear mapping $C:C(K_0)\rightarrow C(K)$. Observe that 
\begin{eqnarray*}
\lefteqn{\|C(f)\|_{\infty}=\sup_{k\in K}|C(f)(k)|}\\
&&=\sup_{k\in K}|f\circ T^*\circ \dd^K(k)|=\sup_{k_0\in K_0}|f(k_0)|=\|f\|_{\infty},
\end{eqnarray*}
This shows that $C:C(K_0)\rightarrow C(K)$ is a linear isometry. Let $R:X\rightarrow C(K_0)$ be defined as $R(x):=x|_{K_0}$. Thus, $R$ is a linear isometry from $X$ into $C(K_0)$. Obviously, $T=C\circ R$. If $T$ is a U-embedding, so it is $R$.  \eor
\end{remark}

\begin{remark}\label{rem:delta-extension}
\rm
Given $F\in C(K,(X^*,w^*))$ such that $T_F$ is an isometry (i.e., $F$ satisfies any of the equivalent conditions (ii), (iii), or (iv) in Lemma \ref{lemma-isometry}) and $k\in K$ is such that $\|F(k)\|=1$, then $\delta_k\in C(K)^*$ is a Hahn--Banach extension of $F(k)$ through $T_F$. \eor 
\end{remark}

\subsection{The \ucore}\label{subsec:U-core}
The present subsection will be devoted to justify the relevance of the following notion.

\begin{definition}
\label{def:U-core}
Let $T\colon X\to C(K)$ be a linear isometry. We define its \ucore\ as the set 
\begin{equation*}
   \boxed{\Az:= F_T^{-1}(\Ext B_{X^*}) \subset K.}
\end{equation*}

\end{definition}

\rm{}

 Along this note, we will use the symbol $A\dcup B $ to denote $A\cup B$ whenever we also know that $A\cap B= \emptyset$.
 
 \medskip
 
Let us list some easy properties of the \ucore\ for later reference. We omit the proofs.

\medskip

(i) ${\delta (\Az)= \HB_T(\Ext B_{X^*}) \cap \dd(K)}$.

(ii) As $T$ is an isometry, item (iv) of Lemma  \ref{lemma-isometry} implies that $\Az$, and so $\dd(\Az)$, are both non-empty. Indeed, every extreme point of $B_{X^*}$ is the image under $T^*$ of an extreme point of $M(K)$, i.e., a point of the form $\dd_{k}$ or $-\dd_{k}$, for some $k\in K$. As $B_{X^*}$ is symmetric, the result follows.

(iii) Since $\dd(\Az)\subset K$, it follows that $\ol{\dd (\Az)}^{w^*}\cap \ol{- \dd (\Az)}^{w^*}=\emptyset$.

(iv) Put
$$
\boxed{
\hbox{Ext}^+:=T^*(\dd(\Az))\ (=F_T(\Az)),}
$$
and  
$$
\boxed{
\hbox{Ext}^-:=T^*(\dd(-\Az))\ (=-F_T(\Az)).
}
$$
Notice that $\hbox{Ext}^-=-\hbox{Ext}^+$, and that $\Ext B_{X^*}= \hbox{Ext}^+\cup\hbox{Ext}^-$. 

(v) $F_{T}(K)\cap\Ext B_{X^*}=\hbox{Ext}^+$.
 
(vi) Since $T^*$ is $w^*$-$w^*$-continuous, $T^*\left(\ol{\HB_T(\Ext B_{X^*})}^{w^*}\right)= \ol{\Ext B_{X^*}}^{w^*}$, and $ F_T\left(\ol{\Az}\right)= \ol{\hbox{Ext}^+}^{w^*}$.

(vii) From (iv) and (vi) it follows that 
$$ \ol{\Ext B_{X^*}}^{w^*} =\ol{\hbox{Ext}^+}^{w^*}\cup -\ol{\hbox{Ext}^+}^{w^*}\ (=F_T(\ol{\Az})\cup -F_T(\ol{\Az})).
$$

(viii) If $T$ is a U-embedding, then $T^*|_{\HB_T(S_X^*)}:\HB_T(S_{X^*})\rightarrow S_{X^*}$ is a $w^*$-$w^*$-homeomorphism (see Proposition \ref{prop:U_homeoT^*}), hence $\hbox{Ext}^+\cap \hbox{Ext}^-=\emptyset$. Thus, 
$\Ext B_{X^*}=\hbox{Ext}^+ \ \dcup \ \hbox{Ext}^-$.

\subsection{Face structure of the dual of $C(K)$-spaces}
{\label{subsec:face-structure}

Fix a compact Hausdorff space $K$. The Riesz representation theorem gives a handful description of the face $\face(f)$ determined by any function $f\in S_{C(K)}$. This is the content of Lemma \ref{lemma_faces_CK} below. A measure $\mu$ on $K$ is said to be {\bf supported on a Borel set $B\subset K$} whenever $\mu(C)=0$ for every closed $C\subset K$ disjoint with $B$. Given $f\in C(K)$, put 
$$
\boxed{{\sigma^+(f):=\{t\in K: \ f(t)=1\}},}
$$
and 
$$
\boxed{
{\sigma^-(f):=\{t\in K: \ f(t)=-1\}}.}
$$  

\begin{lemma}[{\cite{Aviles2014}}]\label{lemma_faces_CK}
Let $K$ be a compact Hausdorff space. Fix $f\in S_{C(K)}$. A measure $\mu\in S_{M(K)}$ belongs to the face $\face(f)$ if, and only, if its positive part $\mu^+$ is supported on $\sigma^+(f)$ and its negative part $\mu^-$ is supported on $\sigma^-(f)$.
\end{lemma}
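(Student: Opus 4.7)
The plan is to identify $\face(f)$ as the set of $\mu\in S_{M(K)}$ with $\int f\,d\mu=1$, and then analyze this equality via the Jordan decomposition $\mu=\mu^+-\mu^-$, which satisfies $\mu^+(K)+\mu^-(K)=|\mu|(K)=\|\mu\|=1$.

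For the easy (sufficiency) direction, suppose $\mu^+$ is supported on $\sigma^+(f)$ and $\mu^-$ on $\sigma^-(f)$. By outer/inner regularity of the positive Borel measure $\mu^+$, together with the definition of ``supported on'' given in the text, one checks that $\mu^+(\{t\in K:f(t)<1\})=0$, and similarly $\mu^-(\{t\in K:f(t)>-1\})=0$. Consequently $\int f\,d\mu^+=\mu^+(K)$ and $\int f\,d\mu^-=-\mu^-(K)$, so $\int f\,d\mu=\mu^+(K)+\mu^-(K)=1$, as required.

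For the main (necessity) direction, assume $\mu\in\face(f)$. Since $-1\le f\le 1$ pointwise and $\mu^\pm\ge 0$, we obtain
\begin{equation*}
1=\int f\,d\mu=\int f\,d\mu^+-\int f\,d\mu^+\le \mu^+(K)+\mu^-(K)=1,
\end{equation*}
so the inequality is actually an equality. This forces
\begin{equation*}
\int (1-f)\,d\mu^+=0 \quad\text{and}\quad \int (1+f)\,d\mu^-=0.
\end{equation*}
Since $1-f$ is a nonnegative continuous function and $\mu^+$ is a positive regular Borel measure, I would conclude that $\mu^+(\{f<1\})=0$, and symmetrically $\mu^-(\{f>-1\})=0$.

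The one step that requires a little care is translating these vanishing statements into the paper's definition of ``supported on $B$'' (i.e., $\mu(C)=0$ for every closed $C$ disjoint from $B$). The bridge is inner regularity: given a closed set $C\subset K\setminus\sigma^+(f)$, by compactness of $C$ the continuous function $1-f$ attains a strictly positive minimum $\eta>0$ on $C$, hence $0=\int(1-f)\,d\mu^+\ge\eta\,\mu^+(C)$, forcing $\mu^+(C)=0$. The analogous argument handles $\mu^-$ and $\sigma^-(f)$, completing the equivalence. The only real obstacle is this minor compactness/regularity bookkeeping; the rest is a direct application of the Jordan decomposition and the equality case of an obvious integral bound.
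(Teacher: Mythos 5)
Your argument is correct. Note that the paper itself gives no proof of this lemma --- it is quoted from the cited reference --- so there is nothing internal to compare against; but your self-contained argument is exactly the standard one and it holds up: identifying $\face(f)$ with $\{\mu\in S_{M(K)}:\int f\,d\mu=1\}$, using $\mu^+(K)+\mu^-(K)=\|\mu\|=1$ from the Jordan decomposition, extracting $\int(1-f)\,d\mu^+=0$ and $\int(1+f)\,d\mu^-=0$ from the equality case, and then correctly handling the paper's specific definition of ``supported on'' (vanishing on closed sets disjoint from $\sigma^{\pm}(f)$) via the positive minimum of $1-f$ on such a compact set. The only blemish is a typo in your displayed inequality, where $\int f\,d\mu^+-\int f\,d\mu^+$ should read $\int f\,d\mu^+-\int f\,d\mu^-$; the surrounding reasoning makes the intended meaning unambiguous.
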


Given a space $X$ and a point $x\in S_X$, it is simple to show that the extreme points of the face $\face(x)$ are precisely the extreme points of $B_{X^*}$ sitting in $\face(x)$. In particular, if $X=C(K)$ for a compact topological space $K$, and $f\in S_{C(K)}$, then
\begin{equation}
\label{eq-extreme-face-M(K)}
\Ext\face(f)=\delta^K\big(\sigma^+(f)\big)\dcup\Big(-\delta^K\big(\sigma^-(f)\big)\Big).
\end{equation}
Notice that each of the sets in the disjoint union in (\ref{eq-extreme-face-M(K)}) is a closed set. If there exists an linear isometric embedding $T$ of a space $X$ into a space $C(K)$, we get in particular that, for $x\in S_X$, 
\begin{equation}\label{eq-extreme-faces}
\begin{split}
     \Ext \face (Tx) &= \{\dd_t : \la Tx,\dd_t \ra=1\} \dcup \{-\dd_t : \la Tx,-\dd_t \ra=1\} \\
      & = \{\dd_t : |\la Tx,\dd_t \ra|=1\} \\
       & = \dd^K (\sigma^+(Tx))\dcup -\dd^K (\sigma^-(Tx)).
\end{split}
\end{equation}
It is easy to see that
\[
\sigma^+(Tx)= F_T^{-1}(\face(x))\text{, and }\sigma^-(Tx)=F_T^{-1}(\face(-x)).
\]

\begin{remark}\label{rem-extreme-in-faces-closed}
\rm Assume that $X$ is a Banach space wU-embedded in a space $C(K)$, for a compact topological space $K$. If $x\in X$, it follows from (\ref{eq-extreme-faces}) above that $\Ext \face (Tx)$ is $w^*$-closed. In view of Propositions \ref{prop-wu-and-faces} (see also the comment after it) and \ref{prop-wu-and-facesT^*}, we get that $\Ext \face (x)\ (\subset B_{X^*})$ is $w^*$-closed, too. This simple remark plays a relevant role in some of the arguments below.\eor
\end{remark}

Lemma \ref{lemma_faces_CK} can be restated as the following corollary.

\begin{corollary}\label{corollary_facesU}
Let $T\colon X\to C(K)$ be an isometric embedding and let $x\in S_X$. A necessary and sufficient condition for $\mu\in S_{C(K)^*}$ to belong to $\face(T(x))$ is that its positive part $\mu^+$ is supported on $F_T^{-1}(\face(x))$ and its negative part $\mu^-$ is supported on $F_T^{-1}(\face(-x))$. 
\end{corollary}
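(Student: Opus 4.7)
The plan is to reduce the corollary to a direct application of Lemma \ref{lemma_faces_CK} with $f := Tx \in S_{C(K)}$, by identifying the level sets $\sigma^{\pm}(Tx)$ with the preimages $F_T^{-1}(\face(\pm x))$ that already appear in the paragraph preceding the statement. The key observation making the translation possible is that, by the definition $F_T = T^* \circ \delta^K$ together with the adjoint identity $\langle T^*\mu, x\rangle = \langle \mu, Tx\rangle$ applied to $\mu = \delta_t$, we have
\[
(Tx)(t) = \langle \delta_t, Tx\rangle = \langle T^*\delta_t, x\rangle = \langle x, F_T(t)\rangle, \quad t\in K.
\]

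First I would verify $\sigma^+(Tx) = F_T^{-1}(\face(x))$. The inclusion $F_T^{-1}(\face(x)) \subset \sigma^+(Tx)$ is immediate from the displayed identity: if $F_T(t) \in \face(x)$, then $(Tx)(t) = \langle x, F_T(t)\rangle = 1$. For the converse, if $(Tx)(t) = 1$, then $\langle x, F_T(t)\rangle = 1$; since $\|x\| = 1$ this forces $\|F_T(t)\| \ge 1$, and since $\|F_T(t)\| = \|T^*\delta_t\| \le \|T^*\|\,\|\delta_t\| \le 1$, we get $\|F_T(t)\| = 1$, so $F_T(t) \in S_{X^*} \cap \{x^*:\langle x,x^*\rangle = 1\} = \face(x)$. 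The identification $\sigma^-(Tx) = F_T^{-1}(\face(-x))$ follows by the symmetric argument (replace $x$ with $-x$).

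With these two set equalities in hand, the corollary is literally Lemma \ref{lemma_faces_CK} rewritten: $\mu \in S_{M(K)}$ belongs to $\face(Tx)$ iff $\mu^+$ is supported on $\sigma^+(Tx)=F_T^{-1}(\face(x))$ and $\mu^-$ is supported on $\sigma^-(Tx)=F_T^{-1}(\face(-x))$. There is no genuine obstacle; the only item requiring a touch of care is confirming the norm-one condition $\|F_T(t)\|=1$ on the relevant level sets, which the isometric hypothesis on $T$ delivers automatically via the two-sided estimate above.
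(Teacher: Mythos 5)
Your proof is correct and follows exactly the route the paper intends: the paper's own proof simply cites Lemma \ref{lemma_faces_CK} together with the identities $\sigma^{\pm}(Tx)=F_T^{-1}(\face(\pm x))$ stated just before the corollary, and your argument supplies precisely the verification of those identities (via $(Tx)(t)=\langle x,F_T(t)\rangle$ and the norm estimate $\|F_T(t)\|\le\|T^*\|=1$). Nothing is missing.
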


\begin{proof}
The result is straightforward from Lemma~\ref{lemma_faces_CK} and the observations above.
\end{proof}

%%%%%---------------------------------------------------

There is a particular kind of isometric embeddings into $C(K)$-spaces that is of interest. Namely, given a weak$^*$ closed $K\subset B_{X^*}$, we can define the map $u_K\colon X\to C(K)$ by $u_K(x)(k):=\langle x, k \rangle$, for every $x\in X$ and every $k\in K$. The map $u_K$ is an isometric embedding if and only if $\cco^{w^*}(\pm K)=B_{X^*}$. If this is the case, given $x\in S_X$ it holds that
\begin{align*}
\sigma^+(u_K(x))&=F_{u_K}^{-1}(\face(x))=K\cap \face(x),\\
\sigma^-(u_K(x))&=F_{u_K}^{-1}(\face(-x))=K\cap \face(-x).    
\end{align*}
Let us also state, for later reference, the translation of Corollary~\ref{corollary_facesU} for this particular isometric embedding.

\begin{corollary}\label{cor_faces_uE}
Let $X$ be a Banach space and  $K\subset B_{X^*}$ be $w^*$-closed with $\cco^{w^*}(\pm K)=B_{X^*}$. Given $x\in S_X$, a necessary and sufficient condition for $\mu\in S_{C(K)^*}$ to belong to $\face(u_K(x))$ is that its positive part $\mu^+$ is supported on $K\cap \face(x)$ and its negative part $\mu^-$ is supported on $K\cap \face(-x)$. 
\end{corollary}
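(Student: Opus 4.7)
The plan is to deduce this corollary as an immediate specialization of Corollary~\ref{corollary_facesU} applied to the isometric embedding $T := u_K$. The only actual work is to identify the associated map $F_{u_K}\colon K\to (X^*,w^*)$ given by (\ref{eq-varphi}) and then to translate the abstract preimage conditions of Corollary~\ref{corollary_facesU} into the intersections appearing in the statement.

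First I would confirm that $u_K$ is indeed a linear isometry into $C(K)$, so that Corollary~\ref{corollary_facesU} applies. Since $K$ is $w^*$-closed (hence $w^*$-compact) in $B_{X^*}$ and $\cco^{w^*}(\pm K)=B_{X^*}$, the set $K$ is $1$-norming for $X$, so condition (iii) of Lemma~\ref{lemma-isometry} holds and the implication (iii)$\Rightarrow$(i) yields that $u_K$ is an isometry into.

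Next I would compute $F_{u_K}$ explicitly. By definition, for each $k\in K$ and each $x\in X$,
\[
\langle x, F_{u_K}(k)\rangle = \langle x, (u_K)^*(\delta_k)\rangle = \langle u_K(x),\delta_k\rangle = u_K(x)(k) = \langle x,k\rangle,
\]
so $F_{u_K}(k)=k$; in other words, $F_{u_K}$ is simply the inclusion $K\hookrightarrow (X^*,w^*)$. Consequently
\[
F_{u_K}^{-1}(\face(x)) = K\cap \face(x) \quad\text{and}\quad F_{u_K}^{-1}(\face(-x)) = K\cap \face(-x).
\]

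Finally I would invoke Corollary~\ref{corollary_facesU} with $T=u_K$: a measure $\mu\in S_{C(K)^*}$ lies in $\face(u_K(x))$ if and only if $\mu^+$ is supported on $F_{u_K}^{-1}(\face(x))$ and $\mu^-$ is supported on $F_{u_K}^{-1}(\face(-x))$, and by the identification above these are exactly $K\cap \face(x)$ and $K\cap \face(-x)$. There is no real obstacle here; the whole argument reduces to the trivial identification $F_{u_K}=\operatorname{id}_K$, which is what makes the embeddings of the form $u_K$ so convenient for reading off face data.
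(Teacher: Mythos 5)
Your argument is correct and is exactly the paper's route: the paper states this corollary as ``the translation of Corollary~\ref{corollary_facesU} for this particular isometric embedding,'' having already recorded the identifications $F_{u_K}^{-1}(\face(x))=K\cap\face(x)$ and $F_{u_K}^{-1}(\face(-x))=K\cap\face(-x)$ that you derive from $F_{u_K}=\operatorname{id}_K$. Your extra verification that $u_K$ is an isometry via Lemma~\ref{lemma-isometry} is a harmless (and welcome) amplification of what the paper leaves implicit.
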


\section{Some necessary conditions for U- and wU-em\-bed\-da\-bi\-li\-ty in $C(K)$}

Let $T\colon X\to C(K)$ be a wU-embedding%\sout{ (thus, in particular, if $T$ is a  U-embedding)}
. It was already mentioned above that, as a consequence of Proposition $\ref{prop-wu-and-facesT^*}$,  the restriction of the linear map $T^*$ defines an affine $w^*$-$w^*$-homeomorphism between the faces $\face(Tx)\subset B_{C(K)^*}$ and $\face(x)\subset B_{X^*}$. Thus, we can already conclude that the duals of the spaces $X$ and $C(K)$ must share a similar face structure. Recall that the set of extreme points of the closed unit ball of the dual of a $C(K)$ space is $w^*$-compact; in particular, this is so for the set of extreme points of any of its $w^*$-closed faces. This and the previous observation makes it natural to focus on the following concept:  
\begin{definition}[\cite{Alfsen1971}] Simplices whose proper closed faces have a closed set of extreme points will be called {\bf Bauer simplices}. A {\bf (Bauer) simplexoid} ---see Definition {\em 16.8} in {\em \cite{Phelps2001}}--- is a compact convex set in a locally convex space whose proper closed faces are (respectively, Bauer) simplices. 
\end{definition}

As it was mentioned above, the $w^*$-compact set $B_{C(K)^*}$ is a Bauer simplexoid. Thus, the following necessary condition for wU-embeddability into a  $C(K)$-space holds:
%Arx:\textcolor{purple}{(It needs a reference or a proof)}. \textcolor{red}{as the extreme points of every proper closed face is a subset of $\Ext B_{C(K)^*}= \{\pm \dd_t : t\in K\}$.(whose elements are pairwise linearly independent). AJ: Previous statement needs a reference since the definition of \emph{simplex} for infinite affine dimension is not defined this way. Personally, I do not know if this equivalence is true. However the unit dual ball of $C(K)$ is a simplexoid} This and the affine homeomorphism of faces provided by Proposition \ref{prop-wu-and-facesT^*} lead to the following result.

\begin{proposition}\label{prop-wU-simplexoid}
Let $X$ be a Banach space. If $X$ is wU-embeddable  in a space $C(K)$, then $B_{X^*}$ is a Bauer simplexoid. 
\end{proposition}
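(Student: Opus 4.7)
My plan is to transport the Bauer-simplex structure of the proper weak$^*$-closed faces of $B_{C(K)^*}$ to $B_{X^*}$ via Proposition~\ref{prop-wu-and-facesT^*}.

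The starting observation is that $B_{C(K)^*}$ is itself a Bauer simplexoid. Using Lemma~\ref{lemma_faces_CK}, every proper weak$^*$-closed face of $B_{M(K)}$ is described by a pair $(Z^+,Z^-)$ of disjoint closed subsets of $K$ (the common Hahn-decomposition supports of its measures), with extreme-point set $\delta^K(Z^+)\dcup(-\delta^K(Z^-))$, which is weak$^*$-closed; hence such a face is a Bauer simplex. By Urysohn's lemma, each such face equals $\face(f)$ for some $f\in S_{C(K)}$ with $\sigma^+(f)=Z^+$ and $\sigma^-(f)=Z^-$.

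Next, given a wU-embedding $T\colon X\to C(K)$ and $x\in S_X$, Proposition~\ref{prop-wu-and-facesT^*} supplies an affine $w^*$-$w^*$-homeomorphism $T^*|_{\face(T(x))}\colon\face(T(x))\to\face(x)$ (affineness follows from linearity of $T^*$). Since $\face(T(x))$ is a Bauer simplex by the preceding paragraph and the Bauer-simplex property is an affine-topological invariant, $\face(x)$ is a Bauer simplex for every $x\in S_X$.

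The remaining and most delicate step is to extend this to an arbitrary proper weak$^*$-closed face $F\subset B_{X^*}$. My strategy would be to exhibit some $x\in S_X$ with $F\subset\face(x)$: once this is done, $F$ is a weak$^*$-closed face of the Bauer simplex $\face(x)$ and hence itself a Bauer simplex (its extreme-point set is $F\cap\Ext\face(x)$, which is weak$^*$-closed as an intersection of weak$^*$-closed sets, and the simplex property descends to faces). The main obstacle is precisely the existence of such an $x$, since a priori not every proper weak$^*$-closed face of $B_{X^*}$ is exposed by an element of $X$. To handle it I would pass through the pullback $\widetilde F:=(T^*)^{-1}(F)\cap B_{M(K)}$, which is readily seen to be a proper weak$^*$-closed face of $B_{M(K)}$ (hence of the form $\face(f)$ by the first paragraph) satisfying $T^*(\widetilde F)=F$, and then use the wU-embedding hypothesis together with the position of $T(X)$ inside $C(K)$ to convert the exposing function $f$ into an exposing element $x\in S_X$.
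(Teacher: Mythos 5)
Your first two paragraphs reproduce exactly the paper's own argument: the paper obtains the proposition directly from Proposition~\ref{prop-wu-and-facesT^*} together with the fact that $B_{C(K)^*}$ is a Bauer simplexoid, i.e., it only checks that the faces $\face(x)$, $x\in S_X$, are Bauer simplices. Your third paragraph correctly flags that the definition of simplexoid asks for \emph{every} proper $w^*$-closed face of $B_{X^*}$ to be a Bauer simplex --- a point the paper passes over in silence. (In the finite-dimensional applications this is harmless: there every proper face is contained in an exposed face, exposed faces of $B_{X^*}$ are of the form $\face(x)$, and all functionals attain their norm, so the reduction you describe in your third paragraph does go through.)

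However, the strategy you propose to close the gap in general does not work as stated. First, not every proper $w^*$-closed face of $B_{M(K)}$ equals $\face(f)$ for some $f\in S_{C(K)}$: by Lemma~\ref{lemma_faces_CK}, $\sigma^{+}(f)=f^{-1}(1)$ and $\sigma^{-}(f)=f^{-1}(-1)$ are necessarily zero sets, whereas the disjoint closed sets $(Z^+,Z^-)$ carrying the extreme points of a general $w^*$-closed face of $B_{M(K)}$ need not be $G_\delta$ (compare Lemma~\ref{lema_same_Gd-zs} and Example~\ref{ex:counterex-ideals}); Urysohn's lemma yields $f\equiv 1$ on $Z^+$ and $f\equiv -1$ on $Z^-$, but it cannot force $\sigma^{\pm}(f)=Z^{\pm}$. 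Second, and more seriously, the final step --- converting the exposing function $f$ into some $x\in S_X$ with $F\subset\face(x)$ --- fails in general: there is no reason for $T(X)$ to contain a function attaining the value $1$ precisely on the relevant set, and a non-norm-attaining extreme point $e^*$ of $B_{X^*}$ is already a proper $w^*$-closed face of $B_{X^*}$ that is contained in no $\face(x)$ whatsoever (that singleton is trivially a Bauer simplex, but it shows that ``expose every proper closed face by an element of $X$'' is not an available route). Related to this, the wU hypothesis gives uniqueness of Hahn--Banach extensions only for norm-attaining functionals, so $T^*$ need not be injective on the pullback face $(T^*)^{-1}(F)\cap B_{M(K)}$ when $F$ contains points that do not attain their norm; any argument covering arbitrary proper $w^*$-closed faces has to circumvent this, and neither your sketch nor the paper does so.
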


Next, we present a necessary condition for $T_F$ being a U-embedding.

\begin{lemma}\label{lemma_proper}
Given $F\in C(K,(X^*,w^*))$, a necessary condition for $T_F:X\rightarrow C(K)$ to be a U-embedding is that $F(K)$ is a James boundary and that, given $t,s\in K$  such that $F(t)=F(s)$ (or, alternatively, $F(t)=-F(s)$), then $\|F(t)\|<1$.
%---We call proper to any $F$ satisfying this necessary condition.
\end{lemma}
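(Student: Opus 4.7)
The statement contains two separate necessary conditions on $F$, which I plan to address independently. The first --- that $F(K)$ is a James boundary --- follows essentially for free from the fact that a U-embedding is, by definition, an isometric embedding: one invokes the equivalence (i)$\Leftrightarrow$(ii) of Lemma~\ref{lemma-isometry} to conclude that $F(K)\cup(-F(K))$ is a James boundary for $X$, which is the intended (symmetric) content of the statement. One-dimensional configurations such as $X=\mathbb{R}$ with $K$ a singleton and $F\equiv 1$ show that $F(K)$ taken literally need not norm every $x\in S_X$ even when $T_F$ is a U-embedding, so the symmetric reading is the only one compatible with the rest of the setup.

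For the second (main) condition I would argue by contraposition. Assume $t,s\in K$ satisfy $F(t)=F(s)$ (respectively $F(t)=-F(s)$) and $\|F(t)\|=1$; the goal is to exhibit two distinct Hahn--Banach extensions of $F(t)\in S_{X^*}$ through $T_F$, contradicting the U-embedding hypothesis. The natural candidates are Dirac masses: by Remark~\ref{rem:delta-extension}, whenever $\|F(k)\|=1$ the evaluation $\delta_k\in C(K)^*$ is a Hahn--Banach extension of $F(k)$ through $T_F$.

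In the case $F(t)=F(s)$ with $t\ne s$ this directly yields $\delta_t,\delta_s\in\HB_{T_F}(F(t))$, and they are distinct because $K$ is Hausdorff. In the case $F(t)=-F(s)$ (necessarily $t\ne s$, since otherwise $F(t)=0$ and $\|F(t)\|=0<1$ holds trivially), one computes $T_F^*(-\delta_s)=-F(s)=F(t)$ with $\|-\delta_s\|=1=\|F(t)\|$, so both $\delta_t$ and $-\delta_s$ are Hahn--Banach extensions of $F(t)$; they are distinct because one is a positive measure and the other a nonzero negative one. In either situation uniqueness fails at $F(t)\in S_{X^*}$, producing the desired contradiction.

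The only real subtlety is the interpretive point flagged above about the symmetric reading of ``James boundary''. Once Remark~\ref{rem:delta-extension} is identified as the source of the explicit Dirac extensions, the substantive second part reduces to an almost immediate application of the uniqueness clause in the definition of U-embedding, so no deep obstacle is anticipated in carrying out the argument.
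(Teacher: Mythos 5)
Your argument is correct and coincides with the paper's own proof: the James boundary claim is extracted from Lemma~\ref{lemma-isometry}, and the second condition is obtained by exhibiting the two distinct Hahn--Banach extensions $\delta_t,\delta_s$ (respectively $\delta_t,-\delta_s$) exactly as in Remark~\ref{rem:delta-extension}. Your side remark about the symmetric reading of ``$F(K)$ is a James boundary'' is a fair observation about the statement's phrasing, but it does not change the substance of the proof.
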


\begin{proof}
Let $F$ belong to $C(K,(X^*,w^*))$ such that $T_F:X\rightarrow C(K)$ is a  U-embedding. Lemma~\ref{lemma-isometry} implies that the range of $F$ is a James boundary. Assume that there exists $t$ and $s$ in $K$ such that $F(t)=F(s)$ (alternatively, $F(t)=-F(s)$) and $\|F(t)\|=1$. Then (see Remark \ref{rem:delta-extension}),  the functional $x^*=F(t)\in X^*$ admits two different Hahn--Banach extensions through $T_F$, namely $\delta_t$ and $\delta_s$ ($\delta_{t}$ and $-\dd_s$, respectively). This violates the U-embeddability assumption.
\end{proof}

In Lemma~\ref{lemma-isometry} it is established that a necessary and sufficient condition for a function $F$ belonging to $C(K,(X^*,w^*))$ to define an isometry $T_F$ is that $\Ext B_{X^*}$ is covered by $F(K)\cup -F(K)$. %\sout{Then, Lemma~\ref{lemma_proper} gives $F(K)$ being a proper boundary as a necessary condition for $F$ to be a U-embedding.}
To refine Lemma~\ref{lemma_proper} a bit more, let us prove the following

\begin{proposition}\label{propo_necessary}
 Let $T\colon X\to C(K)$ be a U-embedding and  $\Az\subset K$ be its \ucore. Then ${{F_T}_{|\Az}\colon  \Az \to F_T(\Az)\subset \Ext B_{X^*}} $ is a homeomorphism (when $B_{X^*}$ is endowed with the $w^*$-topology) and $\|F_T(t)\|<1$ for every $t\in K\setminus \overline{\Az}$. 
\end{proposition}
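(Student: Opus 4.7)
The plan is to prove the two claims separately. For the homeomorphism, I would factor $F_T|_{\Az}=T^*\circ \delta^K|_{\Az}$ and recognize each piece as a $w^*$-homeomorphism. Since $\delta^K\colon K\to \delta K$ is always a $w^*$-homeomorphism (as recalled in Section \ref{subsect-intro-c(K)}), its restriction $\delta^K|_{\Az}\colon\Az\to \delta(\Az)$ is one as well. Property (i) of the \ucore\ gives $\delta(\Az)=\HB_T(\Ext B_{X^*})\cap \delta K$, which sits inside $\HB_T(S_{X^*})$ because $\Ext B_{X^*}\subset S_{X^*}$. Since $T$ is a U-embedding, Proposition \ref{prop:U_homeoT^*} provides that $T^*|_{\HB_T(S_{X^*})}$ is a $w^*$-$w^*$-homeomorphism onto $S_{X^*}$; restricting further to $\delta(\Az)$ yields a $w^*$-homeomorphism onto $T^*(\delta(\Az))=F_T(\Az)$. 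Composition delivers the desired homeomorphism $F_T|_{\Az}\colon \Az\to F_T(\Az)\subset \Ext B_{X^*}$.

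For the norm inequality I plan to argue by contrapositive, proving in fact the slightly stronger statement that $\|F_T(t_0)\|=1$ implies $t_0\in \Az$. By Remark \ref{rem:delta-extension}, $\delta_{t_0}$ is a Hahn--Banach extension of $F_T(t_0)$ through $T$, so the U-property gives $\HB_T(F_T(t_0))=\{\delta_{t_0}\}$. The crux is to show $F_T(t_0)\in \Ext B_{X^*}$. Given a decomposition $F_T(t_0)=(a+b)/2$ with $a,b\in B_{X^*}$, I would use the quotient-map nature of $T^*$ (recall $T$ is an isometry) to pick $a',b'\in B_{C(K)^*}$ with $T^*(a')=a$, $T^*(b')=b$, and $\|a'\|=\|a\|$, $\|b'\|=\|b\|$. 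Then $c':=(a'+b')/2$ satisfies $T^*(c')=F_T(t_0)$ and $\|c'\|\le 1$; combined with $\|c'\|\ge \|T^*(c')\|=1$, this forces $\|c'\|=1$. Uniqueness of the Hahn--Banach extension then gives $c'=\delta_{t_0}$; extremality of the Dirac delta $\delta_{t_0}$ in $B_{C(K)^*}$ yields $a'=b'=\delta_{t_0}$, and applying $T^*$ we obtain $a=b=F_T(t_0)$. Hence $F_T(t_0)\in \Ext B_{X^*}$, i.e., $t_0\in \Az\subset \overline{\Az}$, as required.

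The main obstacle I anticipate lies in the second part: one has to transport extremality from $B_{C(K)^*}$ back to $B_{X^*}$, and the subtle step is recognizing that the midpoint $c'$ of Hahn--Banach extensions is itself a norm-one preimage of $F_T(t_0)$, to which the U-uniqueness then applies. Once this is set up, the well-known extremality of Dirac measures in $B_{C(K)^*}$ closes the argument.
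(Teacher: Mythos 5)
Your proof is correct. The first half coincides with the paper's argument: both of you restrict the homeomorphism of Proposition \ref{prop:U_homeoT^*} to $\delta(\Az)\subset \HB_T(S_{X^*})$ and compose with the homeomorphism $\delta^K|_{\Az}$. For the norm estimate, however, you take a genuinely different and in fact sharper route. The paper argues by contradiction for a point $k\in K\setminus\overline{\Az}$ with $\|F_T(k)\|=1$: it separates $k$ from $\overline{\Az}$ by a Urysohn function, approximates $F_T(k)$ by a net of convex combinations of extreme points, lifts these to measures supported on $\pm\delta(\Az)$, and extracts a $w^*$-cluster point which is a second Hahn--Banach extension of $F_T(k)$ annihilated by the Urysohn function --- an argument that genuinely needs $k\notin\overline{\Az}$ in order to run. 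Your argument instead lifts a convex decomposition $F_T(t_0)=(a+b)/2$ to Hahn--Banach extensions, identifies the midpoint with $\delta_{t_0}$ via the U-property, and transports extremality of the Dirac measure back down through $T^*$; this is the standard converse to the observation in Remark \ref{rem-q1}, and it avoids nets, Urysohn functions and cluster points altogether. Each step checks out: the lifts exist because $T^*$ maps $B_{M(K)}$ onto $B_{X^*}$ norm-preservingly, the inequality $\|c'\|\ge\|T^*(c')\|$ holds since $\|T^*\|=1$, and $\delta_{t_0}\in\Ext B_{M(K)}$. What your approach buys is the strictly stronger conclusion that $\|F_T(t)\|<1$ for every $t\in K\setminus\Az$, i.e., $\{t\in K:\|F_T(t)\|=1\}=\Az$, of which the stated inequality on $K\setminus\overline{\Az}$ is a special case; this is consistent with Example \ref{ex:closed-ideals}, where $F_T$ vanishes on $\overline{\Az}\setminus\Az$.
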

\begin{proof}

Recall from Proposition \ref{prop-U-homeo} that $T^*:\HB(S_{X^*})\rightarrow S_{X^*}$ is a $w^*$-$w^*$-homeo\-mor\-phism. Since  $\dd(\Az)\subset \HB(S_{X^*})$, this proves the first statement. 

Let $k\in K\setminus\ol \Az$. Assume that $\|F_T(k)\|=1$. Then $\dd_{k}\in\HB(S_{X^*)}$.  Find $f\in C(K)$ with range in $[0,1]$ such that $f(k)=1$ and $f(c)=0$ for each $c\in\ol{\Az}$. To simplify the notation, put $\hbox{Ext}:=\Ext B_{X^*}$ and write $\hbox{Ext}^+$ and $\hbox{Ext}^-$ as in (iv), Subsection \ref{subsec:U-core}. Recall that $\hbox{Ext}=\hbox{Ext}^+\dcup \hbox{Ext}^-$, and that $\hbox{Ext}^-=-\hbox{Ext}^+$.  Find a net $\{x^*_i:\ i\in I,\ \le\}$ in $\co(\hbox{Ext})$ that $w^*$-converges to $F_{T}(k)$. For $i\in I$ put 
\begin{eqnarray*}
\lefteqn{x^*_{i}=\sum_{e^*\in \hbox{Ext}^+}\aa^{i}_{e^*}e^*+\sum_{e^*\in \hbox{Ext}^-}\aa^{i}_{e^*}e^*,\ \hbox{where}}\\
&&\sum_{e^*\in \hbox{Ext}}\aa^{i}_{e^*}=1,\ \aa^{i}_{e^*}\ge 0,\ \hbox{for } e^*\in \hbox{Ext},
\end{eqnarray*}
and the sum above has only a finite number of non-zero summands. Let us define 
\begin{equation*}
    \mu_i:= \sum_{e^*\in \Ext^+} \aa_e^i \dd_{k_{e^*}} + \sum_{e^*\in \hbox{Ext}^-}\aa_e^i(-\dd_{k_{e^*}})
\end{equation*}
where $k_{e^*}$ is the (unique) element in $K$ such that $F_T(k_{e^*})=e^*$ if $e^*\in \hbox{Ext}^+$ and $T^*(-\dd_{k_{e^*}})=e^*$ if $e^*\in \hbox{Ext}^-$. Observe that $\mu_i \in \co(\pm \dd K)\subset B_{M(K)}$ and 
\begin{equation*}
    T^*(\mu_i)= \sum_{e^*\in \Ext^+}\aa_{e^*}^ie^* + \sum_{e^*\in \hbox{Ext}^-}\aa_{e^*}^ie^*= x_i^*, 
\end{equation*}
and also
\begin{equation*}
\begin{split}
    \la f, \mu_i\ra &= \sum_{e^*\in \Ext^+}\aa_{e^*}^i \la f, \dd_{k_{e^*}}\ra + \sum_{e^*\in \hbox{Ext}^-}\aa_{e^*}^i \la f, -\dd_{k_{e^*}}\ra\\
     &= \sum_{e^*\in \hbox{Ext}^+}\aa_{e^*}^i  f(k_{e^*}) + \sum_{e^*\in \hbox{Ext}^-}\aa_{e^*}^i f(k_{e^*})=0
\end{split}
\end{equation*}
for each $i\in I$. If we let $\mu\ (\in B_{M(L)})$ be a $w^*$-cluster point of $\{\mu_i:i\in I,\ \le\}$, then we have $\la f,\mu\ra=0$ and $T^*\mu=F_T(k)$. Since $\|F_T(k)\|=1$ and $\mu \in B_{M(K)}$, it follows that $\mu$ is a Hahn--Banach extension of $F_T(k)$ which is different from $\dd_k$, a contradiction.
\end{proof}

\begin{example}\label{ex:closed-ideals}
\rm
As observed above, in \cite{Phelps1960a}, Phelps showed that given a compact Hausdorff space $K$ and a closed set $Z\subset K$, then its associated closed ideal $I_Z(K)$ has property U in $C(K)$ ---see Proposition~\ref{prop:phelps-ideals}. In this concrete case ---$T$ is just the canonical inclusion $i$--- the homeomorphism ${F_T}_{|\Az}$ in the previous proposition can not be extended in a natural way to ${F_T}_{|\ol{\Az}}$. Indeed, the set $\Az=K\setminus Z$ may not be closed (by just taking $Z$ closed but not open). In this specific U-embedding, we have a homeomorphism between $\Az$ and $F_T(\Az)$ but this is not the case for $\ol{\Az}$ and ${F_T}|_{\ol{\Az}}$ because $F_T^{-1}(0)=Z$ and thus, in particular, for every $t\in \ol{\Az}  \setminus \Az$, $F_T(t)=0$.
\end{example}

We will end this section proving the existence of a very specific type of subset in $B_{X^*}$ that plays a role in the assumption of the U-embeddability of a space $X$ into a space $C(K)$.

\begin{definition} \label{def:U-suitable}
Given a Banach space $X$, a $w^*$-closed set $E\subset B_{X^*}$ will be called {\bf U-suitable} if it satisfies the following properties:
\begin{enumerate}[\rm (i)]
    \item $ E \cap(-E) \cap S_{X^*}=\emptyset$;
    \item $ E \cup (-E) = \overline{\Ext{B_{X^*}}}^{w^*}$. 
\end{enumerate}
If, moreover, it satisfies the following condition:
\begin{enumerate}[\rm (i),resume]
    \item for every $x\in S_X$,  $E \cap \face (x) = E \cap \ol{\Ext \face (x) }^{w^*}$,
\end{enumerate}
we will say that $E$ is a \textbf{proper} U-suitable set.
\end{definition}

Observe that if $E$ is a U-suitable set of $X$, then $\Ext(B_{X^*})\subset (E \cup (-E)) \cap S_{X^*}$, hence $E\cup(-E)$ is, in particular, a James boundary for $X$.

\medskip

The next proposition shows that the existence of a proper U-suitable set is a necessary condition for having a U-embedding into a space $C(K)$. It gives a precise description of the U-suitable set available in such a circumstance, highlighting one of the key properties of the \ucore. Recall that we shortened $\Ext B_{X^*}$ by $\hbox{Ext}$, that $\hbox{Ext}^+$ denotes $\hbox{Ext}\cap F_T(K)$ ($=F_T(\Az)$, where $\Az$ is the U-core, see Definition \ref{def:U-core} above),  and  $\hbox{Ext}^-=-\hbox{Ext}^+$ (see Subsection \ref{subsec:U-core} above).  Observe, too, that $\ol{\hbox{Ext}^+}^{w^*}=F_T(\ol{\Az})$.

\begin{proposition}\label{prop:existence-U-suit}
Let $T\colon X\to C(K)$ be a U-embedding. Then, the set $(F_T(\ol{\Az})=)\ E:= \ol{\hbox{\rm Ext}^+}^{w^*}\subset B_{X^*}$ is a proper U-suitable set.
\end{proposition}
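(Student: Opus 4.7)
The plan is to verify conditions (i), (ii), (iii) of Definition \ref{def:U-suitable} separately. Condition (ii) is essentially immediate from the listed properties of the \ucore: using item (vii) in Subsection \ref{subsec:U-core}, one has $\ol{\Ext B_{X^*}}^{w^*}=\ol{\hbox{Ext}^+}^{w^*}\cup \ol{\hbox{Ext}^-}^{w^*} = E\cup(-E)$, since $\ol{\hbox{Ext}^-}^{w^*}=-\ol{\hbox{Ext}^+}^{w^*}=-E$. Also, $E$ is $w^*$-closed by definition and sits in $B_{X^*}$ because $B_{X^*}$ is $w^*$-closed.

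For (i), I would argue by contradiction. Suppose $x^*\in E\cap(-E)\cap S_{X^*}$. By (vi) in Subsection \ref{subsec:U-core}, $E=F_T(\ol{\Az})$, so pick $t\in\ol{\Az}$ with $F_T(t)=x^*$ and $s\in\ol{\Az}$ with $F_T(s)=-x^*$. Since $\|x^*\|=1$, both $F_T(t)$ and $F_T(s)$ have norm $1$, so Remark \ref{rem:delta-extension} yields that $\delta_t$ is a Hahn--Banach extension of $x^*$ through $T$ and $\delta_s$ is a Hahn--Banach extension of $-x^*$ through $T$; equivalently, $-\delta_s\in\HB_T(x^*)$. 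Since $T$ is a U-embedding, $\delta_t=-\delta_s$, which is impossible (evaluating at the constant function $1\in C(K)$ yields $2=0$).

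For (iii), one direction $E\cap\ol{\Ext\face(x)}^{w^*}\subset E\cap\face(x)$ is trivial because $\face(x)$ is $w^*$-closed and contains $\Ext\face(x)$. The reverse inclusion is the interesting one. Take $x^*\in E\cap\face(x)$ and write $x^*=F_T(k)$ for some $k\in\ol{\Az}$. Because $x^*\in\face(x)$, we have $(Tx)(k)=\langle x,F_T(k)\rangle=\langle x,x^*\rangle=1$ and $\|x^*\|=1$, i.e., $k\in\sigma^+(Tx)$. By Proposition \ref{prop-wu-and-facesT^*}, the restriction $T^*|_{\face(Tx)}\colon\face(Tx)\to\face(x)$ is an affine $w^*$-$w^*$-homeomorphism between the two faces, so it carries extreme points to extreme points; combining with the description \eqref{eq-extreme-faces}, one gets $F_T(\sigma^+(Tx))\subset\Ext\face(x)\subset\Ext B_{X^*}$. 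Hence $\sigma^+(Tx)\subset F_T^{-1}(\Ext B_{X^*})=\Az$, giving $k\in\Az$ and therefore $x^*=F_T(k)\in\Ext\face(x)\subset\ol{\Ext\face(x)}^{w^*}$.

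The only delicate step is (iii); the argument hinges on identifying $\sigma^+(Tx)$ inside the U-core via the homeomorphism $T^*|_{\face(Tx)}$, which forces $\sigma^+(Tx)\subset\Az$ and in fact shows the stronger statement that $E\cap\face(x)\subset\Ext\face(x)$. Conditions (i) and (ii) are essentially bookkeeping once one combines Remark \ref{rem:delta-extension} with the U-embedding property and the summary properties of the \ucore\ recorded in Subsection \ref{subsec:U-core}.
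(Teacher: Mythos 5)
Your proof is correct, and for conditions (i) and (ii) it matches the paper, which simply declares them ``clear'' after noting $-E=\ol{\hbox{Ext}^-}^{w^*}$ and $\hbox{Ext}=\hbox{Ext}^+\dcup\hbox{Ext}^-$; your explicit two-extensions argument for (i) is the same mechanism as Lemma \ref{lemma_proper}. Where you genuinely diverge is condition (iii). The paper argues by contradiction: assuming some $e_0^*\in(E\cap\face(x))\setminus\Ext\face(x)$, it manufactures a second Hahn--Banach extension of $e_0^*$ by lifting a net in $\co(\Ext\face(x))$ to measures in $\co(\pm\dd K)$ and separating a $w^*$-cluster point from $\dd_{k_0}$ with a Urysohn function ---the same technique as in Proposition \ref{propo_necessary}--- and it first invokes Remark \ref{rem-extreme-in-faces-closed} to replace $\ol{\Ext\face(x)}^{w^*}$ by $\Ext\face(x)$. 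You instead observe directly that any $x^*\in E\cap\face(x)$ equals $T^*(\dd_k)$ for some $k\in\ol{\Az}$, that $\dd_k$ then lies in $\face(Tx)$ and, being extreme in $B_{M(K)}$, is automatically an extreme point of that face, and that the affine $w^*$-$w^*$-homeomorphism $T^*|_{\face(Tx)}$ from Proposition \ref{prop-wu-and-facesT^*} carries extreme points to extreme points, so $x^*\in\Ext\face(x)$. This is shorter, avoids the net/cluster-point machinery and the closedness of $\Ext\face(x)$ altogether (the trivial inclusion $\Ext\face(x)\subset\ol{\Ext\face(x)}^{w^*}$ suffices for the nontrivial direction, and $w^*$-closedness of $\face(x)$ gives the other), and makes transparent that only the wU property is used, exactly as the paper notes in the remark following the proposition. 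The step ``$\sigma^+(Tx)\subset\Az$'' in your write-up is a harmless detour: once you have $F_T(\sigma^+(Tx))\subset\Ext\face(x)$ and $k\in\sigma^+(Tx)$, you are done.
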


\begin{proof} Notice that $-E=\ol{\hbox{Ext}^-}^{w^*}$. Thus, (i) in Definition \ref{def:U-suitable} is clear.

Since $\hbox{Ext}=\hbox{Ext}^+\dcup\hbox{Ext}^-$, (ii) in the same definition should also be clear.

The technique to prove (iii) is similar to the one used in the proof of Proposition \ref{propo_necessary} above. Notice first that, since $T$ is an wU-embedding, for every $x\in S_X$ the mapping $T^*|_{\face(T(x))}\rightarrow \face(x)$ is a (affine) $w^*$-$w^*$-homeomorphism from the $w^*$-compact set $\face(T(x))$ onto the ($w^*$-compact) subset $\face(x)$. As a consequence (that was mentioned above), the set $\Ext \face(x)$ is $w^*$-closed (see Remark \ref{rem-extreme-in-faces-closed}). Thus, we need to prove that $E \cap \face (x) = E \cap \Ext \face (x)$ for every $x\in S_X$. 

Assume that there exists $e^*_0\in (E \cap \face (x)) \setminus\Ext\face(x)$. There exists a (unique) $k_0\in K$ such that $T^*(\dd_{k_0})=e^*_0$, or, alternatively, $T^*(-\dd_{k_0})=e^*_0$. Assume that the first option holds (the argument for the alternative one is similar). Notice that $\dd_{k_0}\not\in \Ext\face(Tx))$. Thus, there exists $f\in C(K)$ with values in $[0,1]$ such that $f(k)=0$ for $k\in K$ such that $\dd_k\in \Ext\face (Tx)$ and $f(\dd_{k_0})=1$. Since $e^*_0\in \face(x)$, there exists a net $\{x^*_i:\ i\in I, \ \le\}$ in $\co(\Ext \face(x))$ that $w^*$-converges to $e^*_0$. 

Let $\hbox{Ext}^+\face(x):=\Ext\face(x)\cap \hbox{Ext}^+$, and $\hbox{Ext}^-\face(x):=\Ext\face(x)\cap \hbox{Ext}^-$. Put, for $i\in I$,
\begin{eqnarray*}
\lefteqn{x^*_i=\sum_{e^*\in\Ext\face(x)}a^i_{e^*}e^*}\\
&& =\sum_{e^*\in\hbox{Ext}^+\face(x)}a^i_{e^*}e^*+\sum_{e^*\in\hbox{Ext}^-\face(x)}a^i_{e^*}e^*,
\end{eqnarray*}
where $a^i_{e^*}\ge 0$ for all $e^*\in\Ext\face(x)$ and $\sum_{e^*\in\Ext\face(x)}a^i_{e^*}=1$. Given $e^*\in \hbox{Ext}^+\face{x}$, find (the unique)  $k_{e^*}\in K$ such that $T^*(\dd_{k_{e^*}})=e^*$. If, alternatively, $e^*\in \hbox{Ext}^-\face{x}$, find (the unique) $k_{e^*}\in K$ such that $T^*(-\dd_{k_{e^*}})=e^*$. 
Let us define, for $i\in I$,
$$
\mu_i:=\sum_{e^*\in\hbox{Ext}^+\face(x)}a^i_{e^*}\dd_{k_{e^*}}+\sum_{e^*\in\hbox{Ext}^-\face(x)}a^i_{e^*}(-\dd_{k_{e^*}}).
$$
Notice that $\mu_i\in \co(\pm \dd K)\ (\subset B_{M(K)})$ and, obviously, $T^*\mu_i=x^*_i$, for $i\in I$. The net $\{\mu_i:\ i\in I,\ \le\}$ has a $w^*$-cluster point $\mu_0\in \cco^{w^*}(\pm\dd K)\ (\subset B_{M(K)})$, and clearly $T^*(\mu_0)=e^*_0$. Thus, $\mu_0$ is a Hahn-Banach extension of $e^*_0$. Notice that $\langle f,\mu_i\rangle=0$ for all $i\in I$, hence $\langle f,\mu_0\rangle=0$. Since $\langle f,\dd_{k_0}\rangle=1$, we got two different Hahn--Banach extensions of $e^*_0$, a contradiction.
\end{proof}

\begin{remark}
\rm Observe that for (iii) in Definition \ref{def:U-suitable} for the set $E$ in Proposition \ref{prop:existence-U-suit}, wU-embeddability of $X$ into $C(K)$ suffices.
\end{remark}

\section{Spaces that cannot be U-embedded in $C(K)$}

We have obtained in the previous section the key necessary condition for a Banach space $X$ to be U-embedded into a $C(K)$ space; namely, the existence of a U-suitable set. This allows us to provide some classes of spaces that are not U-embeddable into a space $C(K)$. 

In that direction, the feeling that whenever $X$ is U-embedded into a space  $C(K)$, then $B_{X^*}$ can not have too many extreme points is supported by the following result, a consequence of Proposition \ref{prop:existence-U-suit} above: %We need first a simple 

%\begin{remark}\label{rem-connection}
%\rm If the topological space  $(\ol{\Ext B_{X^*}}^{w^*}\cap S_{X^*},w^*)$ is connected, then the topological space $(\ol{\Ext B_{X^*}}^{w^*},w^*)$ is connected, too. Indeed, assume $\ol{\Ext B_{X^*}}^{w^*}=A\cup B$, where $A$ and $B$ are disjoint nonempty $w^*$-open (relatively to $\ol{\Ext B_{X^*}}^{w^*}$) subsets  of $\ol{\Ext B_{X^*}}^{w^*}$. Thus, $A\cap \Ext B_{X^*}\not=\emptyset$ and $B\cap \Ext B_{X^*}\not=\emptyset$. Since $\Ext B_{X^*}\subset S_{X^*}$, we get that $A\cap S_{X^*}$ and $B\cap S_{X^*}$ are disjoint nonempty $w^*$-open (relatively to $\ol{\Ext B_{X^*}}^{w^*}\cap S_{X^*}$) subsets of $\ol{\Ext B_{X^*}}^{w^*}\cap S_{X^*}$, a contradiction.\eor  \end{remark}

\begin{corollary}\label{cor-connection}
Let $X$ be a Banach space such that $(\ol{\Ext B_{X^*}}^{w^*}\cap S_{X^*},w^*)$ is connected. Then $X$ cannot be U-embedded into any $C(K)$-space. 
\end{corollary}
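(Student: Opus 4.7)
The plan is to argue by contradiction, turning Proposition \ref{prop:existence-U-suit} into an explicit disconnection of the $w^*$-space $S:=\overline{\Ext B_{X^*}}^{w^*}\cap S_{X^*}$. Assume, toward a contradiction, that there exists a U-embedding $T\colon X\to C(K)$. By Proposition \ref{prop:existence-U-suit}, the set $E:=\overline{\hbox{Ext}^+}^{w^*}\subset B_{X^*}$ is a proper U-suitable set; in particular it is $w^*$-closed and, by conditions (i) and (ii) of Definition \ref{def:U-suitable}, satisfies
$$E\cap(-E)\cap S_{X^*}=\emptyset\qquad\text{and}\qquad E\cup(-E)=\overline{\Ext B_{X^*}}^{w^*}.$$

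Intersecting the second identity with $S_{X^*}$ gives
$$S=(E\cap S)\cup\bigl((-E)\cap S\bigr),$$
and this union is disjoint by the first identity. Both $E\cap S$ and $(-E)\cap S$ are $w^*$-relatively closed in $S$, because $E$ and $-E$ are $w^*$-closed subsets of $B_{X^*}$. Thus, provided the two pieces are nonempty, we obtain a separation of $S$ into two disjoint nonempty relatively $w^*$-closed sets, contradicting the assumed connectedness.

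To check nonemptiness, note that we may assume $X\ne\{0\}$, so by the Krein--Milman theorem $\Ext B_{X^*}\ne\emptyset$. Recalling from item (iv) of Subsection \ref{subsec:U-core} that $\Ext B_{X^*}=\hbox{Ext}^+\cup\hbox{Ext}^-$ with $\hbox{Ext}^-=-\hbox{Ext}^+$, we conclude $\hbox{Ext}^+\ne\emptyset$. Since $\hbox{Ext}^+\subset S_{X^*}$ and $\hbox{Ext}^+\subset E$, we have $\hbox{Ext}^+\subset E\cap S$; by symmetry $\hbox{Ext}^-\subset(-E)\cap S$. Both pieces are therefore nonempty, completing the contradiction.

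I do not foresee any genuine obstacle: the corollary is essentially a direct repackaging of Proposition \ref{prop:existence-U-suit} combined with the trivial symmetry $\hbox{Ext}^-=-\hbox{Ext}^+$. The only subtlety worth stressing in the write-up is that the disconnection must be performed inside $S_{X^*}$ rather than inside $B_{X^*}$: the sets $E$ and $-E$ may well meet in the interior of the dual ball (for instance at $0\in\overline{\hbox{Ext}^+}^{w^*}$ when $X$ is infinite-dimensional), and property (i) of a U-suitable set only prevents their overlap on the sphere.
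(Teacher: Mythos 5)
Your proof is correct and follows essentially the same route as the paper's: the paper's own argument is a one-line appeal to Proposition \ref{prop:existence-U-suit} together with condition (i) of Definition \ref{def:U-suitable} to conclude that $\overline{\Ext B_{X^*}}^{w^*}\cap S_{X^*}$ is disconnected. You have merely (and correctly) filled in the details the paper leaves implicit, namely the closedness and nonemptiness of the two pieces $E\cap S$ and $(-E)\cap S$ and the restriction of the separation to the sphere.
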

\begin{proof} Assume that $X$ is U-embedded in a space $C(K)$. Proposition \ref{prop:existence-U-suit} and (i) in Definition \ref{def:U-suitable} show that $(\ol{\Ext B_{X^*}}^{w^*}\cap S_{X^*},w^*)$ is disconnected.
\end{proof}

\begin{corollary}\label{cor-gateaux}
No G\^ateaux smooth Banach space can be U-embedded into a $C(K)$ space.
\end{corollary}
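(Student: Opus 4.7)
The plan is to combine the preserved face structure in a (w)U-embedding (Proposition \ref{prop-wu-and-facesT^*}) with the rigid extreme point geometry $\Ext B_{C(K)^*}=\dd K\,\dcup\,(-\dd K)$ and Šmulyan's characterisation of Gâteaux smoothness. Suppose for contradiction that $T\colon X\to C(K)$ is a U-embedding of a Gâteaux smooth $X$. Gâteaux smoothness means that every $x\in S_X$ has a unique norming functional $J(x)$, so $\face(x)=\{J(x)\}$. By Proposition \ref{prop-wu-and-facesT^*}, applied to this U- (hence wU-) embedding, $T^*$ restricts to a $w^*$-$w^*$-homeomorphism from $\face(Tx)$ onto $\face(x)$, whence $\face(Tx)=\{\mu_x\}$ is likewise a singleton; equivalently, $Tx$ is a Gâteaux smooth point of $C(K)$.

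Next I would read off that $\mu_x$ is an extreme point of $B_{C(K)^*}$. Using the standard identity $\Ext\face(Tx)=\face(Tx)\cap \Ext B_{C(K)^*}$, the singleton $\face(Tx)=\{\mu_x\}$ must sit inside $\Ext B_{C(K)^*}=\dd K\,\dcup\,(-\dd K)$. Moreover, the uniqueness of the Hahn--Banach extension through $T$ together with $J(-x)=-J(x)$ yields $\mu_{-x}=-\mu_x$, so for every $x\in S_X$ the pair $\{\mu_x,\mu_{-x}\}$ straddles the partition of $\Ext B_{C(K)^*}$ into the two disjoint $w^*$-compact pieces $\dd K$ and $-\dd K$.

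The contradiction will come from a connectedness argument. Šmulyan's lemma gives norm-to-$w^*$ continuity of the duality map $J\colon S_X\to (S_{X^*},w^*)$, while Proposition \ref{prop-wu-and-facesT^*} furnishes the $w^*$-$w^*$ continuity of the inverse of $T^*|_{\HB_T(\NA(X)\cap S_{X^*})}$; composing the two, $x\mapsto \mu_x$ is norm-to-$w^*$ continuous. For $\dim X\geq 2$ the sphere $S_X$ is path-connected, so $\{\mu_x:x\in S_X\}$ must be a $w^*$-connected subset of $\dd K\,\dcup\,(-\dd K)$, contradicting the fact that it meets both pieces. (This essentially recovers the conclusion of Corollary \ref{cor-connection}, but applied to the subset $J(S_X)\subset \Ext B_{X^*}\cap S_{X^*}$ rather than to the full set of extreme points.)

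The main obstacle, such as it is, lies in cleanly composing the continuities to get $x\mapsto \mu_x$; one must observe that $J$ lands in $\NA(X)\cap S_{X^*}$, which is precisely the domain where $(T^*)^{-1}$ is $w^*$-$w^*$ continuous by Proposition \ref{prop-wu-and-facesT^*}. A caveat: the one-dimensional case is a genuine exception, since by Corollary \ref{coro-one-dimension} the line $\mathbb R$ can be U-embedded in any $C(K)$ possessing a Gâteaux smooth point; the statement is therefore intended for $\dim X\geq 2$.
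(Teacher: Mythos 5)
Your argument is correct (modulo the dimension caveat you yourself raise) but it follows a genuinely different route from the paper. The paper argues globally on $X^*$: Gâteaux smoothness gives $\NA(X)\cap S_{X^*}\subset \Ext B_{X^*}$, Bishop--Phelps upgrades this to $\ol{\Ext B_{X^*}}^{w^*}\cap S_{X^*}=S_{X^*}$, and then Corollary \ref{cor-connection} --- whose proof rests on the existence of a proper U-suitable set (Proposition \ref{prop:existence-U-suit}), a nontrivial measure-theoretic result --- yields the contradiction with connectedness of $S_{X^*}$. You instead work pointwise on the sphere of $X$: the face homeomorphism of Proposition \ref{prop-wu-and-facesT^*} sends the singleton face $\face(x)=\{J(x)\}$ to a singleton face $\{\mu_x\}$ of $B_{M(K)}$, which must therefore be an extreme point, i.e.\ lie in $\dd K\,\dcup\,(-\dd K)$; the symmetry $\mu_{-x}=-\mu_x$ forces the image $\{\mu_x:x\in S_X\}$ to meet both of these disjoint $w^*$-compact (hence relatively clopen) pieces, while Šmulyan's lemma composed with the $w^*$-$w^*$-continuity of the inverse of $T^*|_{\HB_T(\NA(X)\cap S_{X^*})}$ (Proposition \ref{prop-wu-and-faces} and its isometric version) makes $x\mapsto\mu_x$ continuous on the connected set $S_X$ --- a contradiction. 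What your approach buys is independence from Bishop--Phelps and from the U-suitable set machinery of Proposition \ref{prop:existence-U-suit}, at the modest price of Šmulyan's lemma; in effect you run the connectedness obstruction inside $\Ext B_{M(K)^*}$ rather than inside $\ol{\Ext B_{X^*}}^{w^*}\cap S_{X^*}$. Both arguments use the same underlying disconnection phenomenon, so the kinship with Corollary \ref{cor-connection} that you note is real.

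Your caveat about dimension one is a legitimate observation and applies equally to the paper's own proof: for $\dim X=1$ the set $S_{X^*}$ consists of two points and is \emph{not} connected, so the paper's assertion that $\ol{\Ext B_{X^*}}^{w^*}\cap S_{X^*}=S_{X^*}$ is ``obviously connected'' fails there, and indeed $\R$ is Gâteaux smooth yet U-embeds into $C[0,1]$ by Corollary \ref{coro-one-dimension}. The statement should be read with $\dim X\ge 2$, exactly as you say.
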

\begin{proof}
If $X$ is a G\^ateaux smooth Banach space then $\NA(X)\cap S_{X^*}\subset \Ext B_{X^*}$. By the Bishop--Phelps theorem,  $\NA(X)\cap S_{X^*}$ is norm-dense in $S_{X^*}$ and, therefore $\ol{\Ext B_{X^*}}^{w^*}\cap S_{X^*}=S_{X^*}$, which is obviously connected in the $w^*$-topology. The result follows from  Corollary \ref{cor-connection}. 
\end{proof}

We will see later ---see Corollary~\ref{coro-char-2dim}--- that for two-dimensional subspaces, being non-G\^ateaux smooth is equivalent to U-embeddability into $C(K)$-spaces.

\section{Construction of U-embeddings}

\subsection{A sufficient condition in terms of Bauer simplexoids}
Let us first show ---as a consequence of Bauer characterization of extreme points of compact convex subsets of locally convex spaces--- that a U-suitable set $E$ give uniqueness of the extension for extreme points of the unit ball of $X^*$ that are simultaneously norm-attaining, through its associated isometric embedding $u_E$.

\begin{proposition}\label{prop-uE} Let $X$ be a Banach space for which there exists a U-suitable set $E$. Then the map $u_E:X\rightarrow C(E)$ is an isometric embedding such that for every $x^*\in\Ext{B_{X^*}}\cap\NA{(X)}$ there exists a unique norm-one measure $\mu$ on $E$ such that $u_E^*(\mu)=x^*$. Precisely, if $x^*\in E$ then $\mu=\dd_{x^*}$, and if $x^*\in -E$ then $\mu=-\dd_{-x^*}$.  
\end{proposition}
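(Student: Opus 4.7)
The plan is to handle the three assertions of the proposition in turn: that $u_E$ is an isometric embedding, existence of the claimed Dirac representatives, and uniqueness of the norm-one representing measure for each $x^*\in \Ext B_{X^*}\cap \NA(X)$. That $u_E$ is an isometric embedding is immediate from Lemma~\ref{lemma-isometry}: property (ii) of Definition~\ref{def:U-suitable} gives $E\cup(-E)\supset \Ext B_{X^*}$, so Lemma~\ref{lemma-isometry}(iv) applies with $F_{u_E}:E\to B_{X^*}$ the identity inclusion. For existence, a direct check shows that $\dd_{x^*}$ (when $x^*\in E$) and $-\dd_{-x^*}$ (when $x^*\in -E$) are norm-one elements of $C(E)^*$ whose images under $u_E^*$ are both $x^*$; property (i) forces these two situations to be mutually exclusive since $x^*\in S_{X^*}$.

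For uniqueness, fix $x^*\in \Ext B_{X^*}\cap \NA(X)$, a norming point $x\in S_X$, and an arbitrary $\mu\in S_{C(E)^*}$ with $u_E^*(\mu)=x^*$. Since $\|\mu\|=\|x^*\|=1$, $\mu$ is a Hahn--Banach extension of $x^*$ through $u_E$, hence $\mu\in\face(u_E(x))$. Corollary~\ref{cor_faces_uE} then constrains the Jordan decomposition $\mu=\mu^+-\mu^-$: $\mu^+$ is supported on $E\cap\face(x)$ and $\mu^-$ on $E\cap\face(-x)$. Set $\alpha:=\|\mu^+\|$, $\beta:=\|\mu^-\|$ (so $\alpha+\beta=1$) and let $b^+$, $b^-$ be the $w^*$-barycenters of the normalized probabilities $\mu^+/\alpha$, $\mu^-/\beta$ (when defined). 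Then the identity $u_E^*(\mu)=x^*$, unwound through the definition of $u_E$, reads
\[
x^* = \alpha b^+ + \beta(-b^-),
\]
a convex combination of two points of the $w^*$-closed convex set $\face(x)$, using that $-b^-\in -\face(-x)=\face(x)$.

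Since $x^*\in \Ext B_{X^*}\cap \face(x)=\Ext\face(x)$, extremality forces one of the alternatives $\alpha=0$, $\beta=0$, or $b^+=-b^-=x^*$. To convert this into information about $\mu$ itself I would invoke the classical Bauer-type fact that an extreme point of a $w^*$-compact convex set admits only the Dirac at itself as a representing probability measure (proved by Hahn--Banach separation of $x^*$ from a compact subset of the support of any candidate measure and splitting the measure to contradict extremality). Thus, when $\alpha=1$, $\mu^+$ is a probability with barycenter $x^*\in\Ext\face(x)$, forcing $\mu^+=\dd_{x^*}$, and hence $x^*\in E$ and $\mu=\dd_{x^*}$. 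The symmetric case $\beta=1$ yields $\mu=-\dd_{-x^*}$ with $x^*\in -E$. The remaining possibility, with both $\alpha,\beta>0$ and $b^+=-b^-=x^*$, would give simultaneously $x^*\in E$ and $-x^*\in E$, violating property (i) of Definition~\ref{def:U-suitable}.

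The main obstacle I foresee is the clean invocation (or brief self-contained reproof) of the Bauer-type uniqueness for representing an extreme point by a probability measure; once that is in hand, the rest is bookkeeping with the Jordan decomposition and exploiting the symmetry between $E$ and $-E$ built into the U-suitable hypothesis, which is precisely what rules out the mixed case and pins down $\mu$ to the specific Dirac (or negative Dirac) dictated by whether $x^*$ lies in $E$ or in $-E$.
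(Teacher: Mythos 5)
Your proof is correct and follows essentially the same route as the paper's: both rest on Corollary~\ref{cor_faces_uE} to pin down the supports of the Jordan decomposition and on Bauer's theorem (an extreme point of a $w^*$-compact convex set has only the Dirac at itself as representing probability measure). The only difference is organizational: the paper reflects the negative part to assemble a single probability measure on $B_{X^*}$ with barycenter $x^*$ and applies Bauer once, whereas you split into cases via extremality of $x^*$ in the convex combination of the two normalized barycenters and apply Bauer to each Jordan part separately; both versions work.
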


\begin{proof}
That $u_E:X\rightarrow C(E)$ is an isometric embedding was already noticed above (see the paragraph after Definition \ref{def:U-suitable}). Fix $x^*\in\Ext{B_{X^*}}\cap\NA{(X)}$ that attains its norm at, say, $x\in S_X$. Assume that $\nu$ in $S_{M(E)}$ is such that $u_E^*(\nu)=x^*$. By Corollary~\ref{cor_faces_uE}, it holds that $\nu^+$ is supported on $E\cap \face(x)$ and $\nu^-$ is supported on $E\cap \face(-x)$. Define the measure $\mu$ on $B_{X^*}$ by 
\[
\mu(A):=\nu^+(A\cap E\cap \face(x))+\nu^-(-A\cap E\cap \face(-x)),
\] 
for every Borel subset $A$ of $B_{X^*}$. It is clear that $\mu$ is a probability measure on $B_{X^*}$ ---it is well defined due to the fact that $\face(x)$ and $\face(-x)$ are $w^*$-closed---. It is also easy to see that for every $z\in X$, it holds that
\begin{align*}
\int z\,d\mu&= \int_{E\cap \face(x)} z\,d\mu+\int_{-E\cap \face(x)} z\,d\mu=\\
&=\int_{E\cap \face(x)} z\,d\nu^+-\int_{E\cap \face(-x)} z\,d\nu^-=\\
&=\int z\,d\nu= \langle u_E(z),\nu\rangle=\langle z,x^*\rangle.    
\end{align*}
By Bauer theorem~\cite[Proposition 1.4]{Phelps2001} applied to the convex and $w^*$-compact set $B_{X^*}$, we deduce that $\mu=\delta_{x^*}$, which implies that $\nu=\delta_{x^*}$ in case $x^*$ belongs to $E$, and that $\nu=-\delta_{-x^*}$ in case $x^*$ belongs to $-E$.
\end{proof}

\begin{definition}[{See Propositions 1.1 and 1.2 in \cite{Phelps2001}}]\label{def-resultant}
Let $K\subset S$ be a compact convex subset of the locally convex space $(S,\mathcal T)$. There exists and affine, onto,  and $w^*$-$\mathcal T$-continuous map $r_K\colon P(\ol{\Ext(K)})\to K$ (where $P(\ol{\Ext(K)})$ is the set of all  probability measures on $\ol{\Ext(K)}$) such that, for every $f\in S^*$ and every $\mu\in  P(\ol{\Ext(K)})$,
\[
\int f\,d\mu = \langle r_K(\mu),f\rangle.
\]
The map $r_K$ is called the {\bf resultant map}. The point $r_K(\mu)\ (\in K)$ is usually called the {\bf resultant} ---or {\bf barycenter}--- of the measure $\mu$. 
\end{definition}

In the concrete case where $X$ is a Banach space and $S=(X^*,w^*)$, since $S^*=X$, it follows that for every $x^*\in K$ the preimage $(r_K)^{-1}(x^*)$ is precisely the set of representing probability measures of $x^*$ on $K$; that is, a probability measure $\mu$ belongs to $(r_K)^{-1}(x^*)$ if, and only if, it satisfies $\langle y, x^*\rangle =\int_K y\,d\mu$ for every $y\in X$ ---in a similar way, though not completely analogous, to the Riesz representation theorem in the case $X = C(K)$.

\medskip

Let $X$ be a Banach space. Given $x\in S_X$, put
$$
\ex(x):=\overline{\Ext{\face(x)}}^{w^*}\ \Big(\subset \face(x)\subset S_{X^*}\Big).
$$
Certainly, $\face (x)=\cco^{w^*}(\ex(x))$. 

\medskip

Put 
$$
\ex^+(x):=\ex(x)\cap E,\ \hbox{and } \ex^-(x):=\ex(x)\cap -E.
$$
Thus, 
\begin{equation}\label{eq-E(x)}
\ex(x)=\ex^+(x)\ \dcup \ \ex^-(x),
\end{equation}
and both sets, $\ex^+(x)$ and $\ex^-(x)$, are $w^*$-closed.

The following lemma collects some easy facta for later reference. Its proof shall be omitted. 

\begin{lemma}\label{lemma-U-suitable}
Let $E$ be a U-suitable set for a Banach space $X$, and let $u_E:X\rightarrow C(E)$ be the natural isometric embedding defined above. Let $x\in S_X$. Then
\begin{enumerate}[\rm (i)]
    \item $\Ext \face\big(u_{E}(x)\big)=\dd\big(E\cap\face(x)\big)\dcup -\dd\big(E\cap\face(-x)\big)$.
    \item $\Ext\face(x)\subset \ex(x)\subset \big(E\cap\face(x)\big)\dcup \big(-E\cap\face(x)\big)$.
    \end{enumerate}
    If $E$ is, moreover, proper, then
\begin{enumerate}[\rm (i)]\setcounter{enumi}{2}
\item $\ex(x)= \big(E\cap\face(x)\big)\dcup \big(-E\cap\face(x)\big)$. More precisely, $\ex^+(x)=E\cap\face(x)$ and $\ex^-(x)=-E\cap\face(x)$. 
\end{enumerate}
\end{lemma}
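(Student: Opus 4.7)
The plan is to deduce all three items from formulas already recorded in Section~\ref{subsec:face-structure}, combined with the defining properties of a U-suitable set and the elementary fact that $E \cap \face(x)$ and $-E \cap \face(x)$ are $w^*$-closed. For item (i), I would simply specialize equation (\ref{eq-extreme-faces}) to $T = u_E$ and the compact space $E$; the observation recorded just before Corollary~\ref{cor_faces_uE} gives $\sigma^+(u_E(x)) = E \cap \face(x)$ and $\sigma^-(u_E(x)) = E \cap \face(-x)$, and the disjointness of the union is already built into (\ref{eq-extreme-faces}).

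For item (ii), the inclusion $\Ext \face(x) \subset \ex(x)$ is immediate from the definition of $\ex(x)$. For the inclusion $\ex(x) \subset (E \cap \face(x)) \dcup (-E \cap \face(x))$, I would first observe that every $e^* \in \Ext \face(x)$ belongs to $\Ext B_{X^*} \subset \ol{\Ext B_{X^*}}^{w^*} = E \cup (-E)$ by (ii) of Definition~\ref{def:U-suitable}, and therefore to $(E \cap \face(x)) \cup (-E \cap \face(x))$. Because $E$, $-E$ and $\face(x)$ are all $w^*$-closed, this union is $w^*$-closed and hence contains $\ex(x) = \ol{\Ext \face(x)}^{w^*}$. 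The disjointness of the union follows from $\face(x) \subset S_{X^*}$ combined with $E \cap (-E) \cap S_{X^*} = \emptyset$, which is (i) of Definition~\ref{def:U-suitable}.

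For item (iii), properness gives directly $E \cap \face(x) = E \cap \ex(x) \subset \ex(x)$, which by definition of $\ex^+(x)$ identifies $\ex^+(x) = E \cap \face(x)$. To handle the $-E$ side, I would apply condition (iii) of Definition~\ref{def:U-suitable} to the point $-x \in S_X$ and exploit the symmetries $\face(-x) = -\face(x)$ and $\ex(-x) = -\ex(x)$ (the second one follows from $\Ext \face(-x) = -\Ext \face(x)$ after passing to $w^*$-closures, since negation is a $w^*$-homeomorphism). Negating both sides of $E \cap \face(-x) = E \cap \ex(-x)$ yields $-E \cap \face(x) = -E \cap \ex(x) \subset \ex(x)$, so $\ex^-(x) = -E \cap \face(x)$. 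Combined with the reverse inclusion and the disjointness already established in (ii), this gives the claimed equality.

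The only mildly nontrivial step is the symmetrization in (iii)---passing through $-x$ and negating to transfer the properness hypothesis from the $E$-side to the $-E$-side. Everything else is bookkeeping with $w^*$-closed sets, which is presumably why the authors chose to omit the proof.
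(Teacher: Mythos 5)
Your proof is correct, and since the paper deliberately omits the proof of this lemma (``Its proof shall be omitted''), your argument supplies exactly the intended one: item (i) is the specialization of equation (\ref{eq-extreme-faces}) via the identities $\sigma^{\pm}(u_E(x))=E\cap\face(\pm x)$, item (ii) follows from $\Ext\face(x)=\Ext B_{X^*}\cap\face(x)$ together with conditions (i)--(ii) of Definition \ref{def:U-suitable} and the $w^*$-closedness of $E$, $-E$ and $\face(x)$, and item (iii) follows from properness applied at $x$ and at $-x$ with the symmetries $\face(-x)=-\face(x)$ and $\ex(-x)=-\ex(x)$. No gaps; the symmetrization step you flag is indeed the only point requiring a moment's care.
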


We continue now the work done in Subsection \ref{subsec:face-structure}.

\medskip

Let $X$ be a Banach space. Fix $x\in S_X$. In order to simplify the notation in the following two results, we shall write 
$$
\face:=\face(x)\ (\subset B_{X^*})
$$ 
and 
$$
\face_{u}:=\face\big(u_{E}(x)\big)\ (\subset B_{M(E)}).
$$

\begin{lemma}
Let $X$ be a Banach space. Let us assume that $E\subset B_{X^*}$ is a U-suitable set for $X$. Let $x\in S_X$. Then, the resultant mapping ${r_{\mathfrak{F}_u}:P(\Ext\face_u)\rightarrow \face_u}$ is one-to-one and onto.
\end{lemma}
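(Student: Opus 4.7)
My plan is to exploit the fact that $\face_u$ is essentially a Bauer simplex in disguise, namely by explicitly building the bijection via the Jordan decomposition. First, I would use Lemma \ref{lemma-U-suitable}(i) to write
\[
\Ext\face_u=\dd\big(E\cap\face(x)\big)\ \dcup\ -\dd\big(E\cap\face(-x)\big),
\]
a disjoint union of two $w^*$-closed sets (hence clopen in $\Ext\face_u$). Since $\face(x)\cap\face(-x)=\emptyset$ (a point cannot simultaneously give value $1$ and $-1$ to $x$), the corresponding subsets of $E$ are disjoint too. This disjointness is the structural ingredient that will force uniqueness.

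For surjectivity, fix $\mu\in\face_u\subset S_{M(E)}$. By Lemma \ref{lemma_faces_CK} (together with the identities $\sigma^+(u_E(x))=E\cap\face(x)$ and $\sigma^-(u_E(x))=E\cap\face(-x)$ noted just after Corollary \ref{corollary_facesU}), the Jordan decomposition $\mu=\mu^+-\mu^-$ satisfies $\mathrm{supp}\,\mu^+\subset E\cap\face(x)$ and $\mathrm{supp}\,\mu^-\subset E\cap\face(-x)$, with $\|\mu^+\|+\|\mu^-\|=1$. I would push $\mu^+$ forward through the $w^*$-homeomorphism $k\mapsto\dd_k$ and $\mu^-$ through $k\mapsto-\dd_k$, and add the two positive measures to obtain $\nu\in P(\Ext\face_u)$. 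A direct check shows that for every $f\in C(E)$,
\[
\int_{\Ext\face_u}f\,d\nu=\int_E f\,d\mu^+-\int_E f\,d\mu^-=\int_E f\,d\mu=\langle f,\mu\rangle,
\]
so $r_{\face_u}(\nu)=\mu$ by Definition \ref{def-resultant}.

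For injectivity, given any $\nu\in P(\Ext\face_u)$ with $r_{\face_u}(\nu)=\mu$, I would use the clopen decomposition of $\Ext\face_u$ to split $\nu=\nu^++\nu^-$, with $\nu^+$ carried by $\dd(E\cap\face(x))$ and $\nu^-$ by $-\dd(E\cap\face(-x))$. Pushing these back to $E$ through the inverses of the two homeomorphisms above produces positive measures $\tilde\mu^+,\tilde\mu^-$ on $E$ with disjoint supports contained in $\face(x)$ and $\face(-x)$ respectively. Repeating the same integration identity shows that $\mu=\tilde\mu^+-\tilde\mu^-$ in $M(E)$; by the disjointness of supports this is necessarily the Jordan decomposition of $\mu$, which is unique. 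Hence $\tilde\mu^+$ and $\tilde\mu^-$ are determined by $\mu$, and pushing forward again recovers $\nu$ uniquely.

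The only mildly delicate point is purely bookkeeping: verifying that the push-forward/push-back through $k\mapsto\pm\dd_k$ genuinely yields Borel probability measures on $\Ext\face_u$ and that the barycenter computation pulls back unchanged through these homeomorphisms. I do not expect a real obstacle here, since both $E\cap\face(x)$ and $E\cap\face(-x)$ are $w^*$-compact and the maps involved are $w^*$-homeomorphisms onto their images; the rest is a matter of applying the change-of-variables formula and invoking uniqueness of the Jordan decomposition.
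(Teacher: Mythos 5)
Your proof is correct, and it takes a more constructive route than the paper's. For injectivity, the paper establishes the explicit formula $r_{\face_u}(\nu)(B)=\nu\big(\dd(B\cap\face(x))\big)-\nu\big(-\dd(B\cap\face(-x))\big)$ for compact $B\subset E$ by approximating indicators with continuous functions (an argument it only sketches, ``the reader may easily fill the $\ee$'s missing''); you reach the same conclusion by splitting $\nu$ along the clopen decomposition $\Ext\face_u=\dd(E\cap\face(x))\,\dcup\,-\dd(E\cap\face(-x))$, pulling the two pieces back to $E$, and invoking uniqueness of the Jordan decomposition --- the mutual singularity you need does hold, since $E\cap\face(x)$ and $E\cap\face(-x)$ are disjoint $w^*$-compact sets. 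This is essentially the same structural fact, packaged more cleanly and without the $\ee$-bookkeeping. For surjectivity the divergence is more genuine: the paper appeals to a general Choquet-type representation theorem (valid because $\Ext\face_u$ is $w^*$-closed), whereas you build the representing probability explicitly as the sum of the pushforwards of $\mu^+$ and $\mu^-$ under $k\mapsto\dd_k$ and $k\mapsto-\dd_k$, using Corollary \ref{cor_faces_uE} to locate the supports and $\|\mu^+\|+\|\mu^-\|=1$ to get total mass one. Your version is more self-contained and in fact exhibits the inverse of $r_{\face_u}$ directly, at the cost of the measure-theoretic bookkeeping you acknowledge; the paper's citation of the representation theorem is shorter but less informative about what the inverse map actually is.
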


\begin{proof}
Let us prove first injectivity. It is enough to show that  
\begin{equation}\label{eq-r-and-mu}
r_{\face_u}(\mu)(B)=\mu\big(\dd(B\cap\face)\big)-\mu\Big(-\dd\big(B\cap\face(-x)\big)\Big)
\end{equation}
for every compact subset $B$ of $E$ and every $\mu\in P(\Ext\face_u)$. Let us sketch the proof.  The reader may easily fill the $\ee$'s missing. The set $\face_u$ is a $w^*$-compact convex subset of the locally convex space $(M(E),w^*)$. Thus, given $f\in C(E)$, the definition of resultant shows that 
\begin{equation}\label{eq-barycenter}
\int f\, d\mu=\int_{\Ext\face_u}f\, d\mu=\langle f,r_{\face_u}(\mu)\rangle.
\end{equation}
Find $g\in C(E)$ such that $g(e^*)=1$ for $e^*\in B$ and that is almost $0$ on $E\setminus B$. Notice that the way $g$ acts on $M(E)$ shows that   $\langle g,-\dd_{e^*}\rangle=-g(e^*)$ for all $e^*\in E$. 
Then
\begin{eqnarray*}
\lefteqn{r_{\face_u}(\mu)(B)\approx \langle g,r_{\face_u}(\mu)\rangle}\\
&&=\int_{\Ext\face_u}g\, d\mu=\int_{\dd(E\cap\face)}g\, d\mu+\int_{-\dd(E\cap\face(-x))}g\, d\mu\\
&&\approx \mu\big(\dd(B\cap\face)\big)-\mu\Big(-\dd\big(B\cap\face(-x)\big)\Big),
\end{eqnarray*}
where the first equality comes from (\ref{eq-barycenter}). This (almost) shows (\ref{eq-r-and-mu}) and then the sought injectivity. 

Surjectivity is a consequence of a general representation theorem in this case, since $\Ext\face_u$ is $w^*$-closed.\end{proof}

\begin{lemma} \label{lemma-measures-measures}
Let $X$ be a Banach space.  Let us assume that $E\subset B_{X^*}$ is a U-suitable set for $X$. Then, for every $x\in S_X$ there is a one-to-one and $w^*$-$w^*$-continuous mapping $P:P\big(\ex(x)\big)\to P(\Ext \face_u)$. If $E$ is, moreover, proper, then $P$ is onto. 
\end{lemma}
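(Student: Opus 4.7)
The plan is to define an explicit continuous injection $\Phi\colon \ex(x)\to \Ext\face_u$ and then let $P$ be its pushforward on probability measures. Exploiting the decomposition (\ref{eq-E(x)}), put
\[
\Phi(e^*):=\begin{cases}\dd_{e^*} & \text{if }e^*\in\ex^+(x)\subseteq E,\\ -\dd_{-e^*} & \text{if }e^*\in\ex^-(x)\subseteq -E.\end{cases}
\]
Condition (i) of Definition \ref{def:U-suitable} ensures $\ex^+(x)\cap\ex^-(x)=\emptyset$; both $\ex^\pm(x)$ are $w^*$-closed in $\ex(x)$, so a pasting of continuous restrictions defines a continuous map, and the $w^*$-$w^*$-continuity of $\dd\colon (B_{X^*},w^*)\to (M(E),w^*)$ is immediate. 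Hence $\Phi$ is $w^*$-$w^*$-continuous. By Lemma \ref{lemma-U-suitable}(i)--(ii), together with the observation that $e^*\in -E\cap\face(x)\Leftrightarrow -e^*\in E\cap\face(-x)$, the image of $\Phi$ sits inside $\Ext\face_u=\dd(E\cap\face(x))\dcup(-\dd(E\cap\face(-x)))$. Injectivity of $\Phi$ is immediate: $\dd$ is injective on $B_{X^*}$ and the two pieces of the partition are sent to disjoint components of $\Ext\face_u$.

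Now define $P(\mu):=\Phi_*\mu$ by $\int g\,dP(\mu):=\int(g\circ \Phi)\,d\mu$ for $g\in C(\Ext\face_u)$. Since $g\circ\Phi\in C(\ex(x))$, the map $P$ sends probability measures to probability measures and is $w^*$-$w^*$-continuous. For injectivity, the key point is that $\Phi$---a continuous injection between $w^*$-compact Hausdorff spaces---is a homeomorphism onto its (closed) image; the Tietze extension theorem then shows that every $f\in C(\ex(x))$ is of the form $g\circ\Phi$ for some $g\in C(\Ext\face_u)$. Thus $P(\mu)=P(\nu)$ forces $\int f\,d\mu=\int f\,d\nu$ for all $f\in C(\ex(x))$, and Riesz gives $\mu=\nu$.

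For the surjectivity statement under the proper hypothesis, Lemma \ref{lemma-U-suitable}(iii) upgrades the inclusions in Lemma \ref{lemma-U-suitable}(ii) to equalities $\ex^+(x)=E\cap\face(x)$ and $\ex^-(x)=-E\cap\face(x)$. Consequently $\Phi(\ex^+(x))=\dd(E\cap\face(x))$ and $\Phi(\ex^-(x))=-\dd(E\cap\face(-x))$, so $\Phi$ exhausts $\Ext\face_u$ by Lemma \ref{lemma-U-suitable}(i). Being a continuous bijection between compact Hausdorff spaces, $\Phi$ is then a $w^*$-$w^*$-homeomorphism, and $P=\Phi_*$ is a bijection.

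The main obstacle I anticipate is organizational rather than conceptual: cleanly handling the sign-swap in the definition of $\Phi$ and keeping track of the four subsets $\pm E\cap\face(\pm x)$. Once the decomposition of $\ex(x)$ into two disjoint $w^*$-closed pieces is taken seriously, continuity and injectivity of $\Phi$ are immediate, and the properness hypothesis is used in exactly one place---to promote $\Phi$ from injection to bijection.
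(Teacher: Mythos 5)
Your proof is correct and follows essentially the same route as the paper: your map $\Phi$ is exactly the restriction to $\ex(x)$ of the map the paper calls $\Delta$ (the inverse of $u_E^*|_{\Ext\face_u}$), your pushforward $\Phi_*$ coincides with the paper's set-theoretic definition $\wh\mu(\wh A)=\mu\big(u_E^*(\wh A)\cap\ex(x)\big)$, and the continuity, injectivity, and properness-implies-surjectivity arguments all match (you merely make the injectivity step, which the paper leaves as ``easy,'' explicit via Tietze and Riesz). No gaps.
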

\begin{proof} 
Let $\mu\in P\big(\ex(x)\big)$. We shall define a regular Borel measure $\wh \mu:=P(\mu)$ on $\Ext \face_u$. It is enough to define $\wh\mu$ on the family of all closed (i.e.,  compact) subsets $\wh A$ of $\Ext \face_u$. For such a set $\wh A$, put
\begin{equation}
\wh \mu(\wh A):=\mu \big(u_E^*(\wh A)\cap \ex(x)\big)
\end{equation}
It is well defined, since $u^*_E(\wh A)$ is a $w^*$-compact set. 
We claim that $\wh\mu$ is a probability measure on $\Ext \face_u$. Indeed, $\wh\mu$ is non-negative, and
\begin{equation}
\wh\mu(\Ext \face_u)=\mu\big(\ex(x)\big)=1.
\end{equation}
Let us prove that $P$ is $w^*$-$w^*$-continuous. To this end, observe that the mapping $u_E^*:M(E)\rightarrow X^*$ is $w^*$-$w^*$-continuous, and $u_{E}^*|_{\hbox{\small{Ext}\ }\face_u}$ maps the $w^*$-compact set $\Ext\face_u$ onto the $w^*$-compact set $(\pm E)\cap\face$ in a one-to-one way. Precisely, $u_{E}^*(\dd_{e^*})=e^*$ for $e^*\in E\cap\face$, and $u_{E}^*(-\dd_{e^*})=-e^*$ for $e^*\in E\cap\face(-x)$. Its inverse, denoted by $\Delta:(\pm E)\cap \face\rightarrow \Ext\face_u$, is hence also $w^*$-$w^*$-continuous.

Let $\{\mu_i\}$ be a net in $P(\ex(x))$ that $w^*$-converges to $\mu\in P(\ex(x))$. Let $f\in C\Big(\Ext \face\big(u_{E}(x)\big)\Big)$. Put $g:=f\circ\Delta$. It follows that $g\in C\big(\ex(x)\big)$. Thus, $\langle g,\mu_i\rangle\rightarrow \langle g,\mu\rangle$, hence $\langle f,P(\mu_i)\rangle\rightarrow \langle f,P(\mu)\rangle$, as we wanted to show. From (ii) in Lemma \ref{lemma-U-suitable} it follows easily that $P$ is one-to-one. If $E$ is, moreover, proper, the surjectivity of $P$ follows from the fact (see (i) and (iii) in Lemma \ref{lemma-U-suitable}) that $\ex(x)$ and $\Ext\face_u$ are homeomorphic in their $w^*$-topologies.\end{proof}

The following result provides a sufficient condition for the existence of a wU-embedding. Observe that the Bauer-simplexoid condition on $B_{X^*}$ is unavoidable: The existence of a wU-embedding implies that the $w^*$-closed faces of $B_{X^*}$ are affinely homeomorphic to $w^*$-closed faces of some $C(K)$ space, and so the set ot its extreme points should be $w^*$-closed. 

\begin{theorem}\label{theoremsimplexoid-w-U}
Let $X$ be a Banach space admitting a proper U-suitable set $E$ and whose dual ball is a Bauer simplexoid. Then the map $u_E$ is a wU-embedding from $X$ into $C(E)$.
\end{theorem}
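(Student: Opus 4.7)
By Proposition \ref{prop-wu-and-facesT^*}, the goal reduces to showing that, for every $x \in S_X$, the restriction of $(u_E)^*$ to $\face(u_E(x))$ is a $w^*$-$w^*$-homeomorphism onto $\face(x)$. Continuity is built in, and surjectivity is automatic by Hahn--Banach (any norm-preserving extension of $x^*\in\face(x)$ automatically lies in $\face(u_E(x))$, since $\langle u_E(x),\nu\rangle=\langle x,x^*\rangle=1$). Moreover, both faces are $w^*$-compact Hausdorff, so a continuous bijection between them is already a homeomorphism. The real work is therefore to establish injectivity of $(u_E)^*$ on $\face_u:=\face(u_E(x))$.

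The plan is to factor $(u_E)^*|_{\face_u}$ as a composition of three explicit bijections supplied by the Bauer simplex structure on both sides together with Lemma \ref{lemma-measures-measures}. Write $\face:=\face(x)$. The hypothesis that $B_{X^*}$ is a Bauer simplexoid forces $\face$ to be a Bauer simplex with closed extreme-point set $\ex(x)$, so the resultant $r_\face\colon P(\ex(x))\to\face$ is a bijection. The classical fact that $B_{C(E)^*}$ is a Bauer simplexoid makes $\face_u$ a Bauer simplex whose (closed) set of extreme points was identified in Lemma \ref{lemma-U-suitable}, so $r_{\face_u}\colon P(\Ext\face_u)\to\face_u$ is likewise a bijection. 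Finally, since $E$ is proper, Lemma \ref{lemma-measures-measures} provides a bijection $P\colon P(\ex(x))\to P(\Ext\face_u)$, obtained as the pushforward along the $w^*$-$w^*$-homeomorphism $\Delta\colon\ex(x)\to\Ext\face_u$ whose inverse is $u_E^*|_{\Ext\face_u}$.

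The core step is to verify the commutation
\[
(u_E)^*|_{\face_u}\circ r_{\face_u}=r_\face\circ P^{-1}.
\]
Given $\hat\nu\in P(\Ext\face_u)$ and $\nu:=r_{\face_u}(\hat\nu)$, I would test $(u_E)^*(\nu)$ against an arbitrary $y\in X$, apply the defining property of the resultant for $u_E(y)\in C(E)$ (viewed as an element of the dual of $(M(E),w^*)$), then use the identity $\langle u_E(y),m\rangle=\langle y,\Delta^{-1}(m)\rangle$ for $m\in\Ext\face_u$ to push the integral forward to $\ex(x)$, and recognize the result as $\langle y,r_\face(P^{-1}(\hat\nu))\rangle$. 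Once the displayed identity is in hand, the left-hand side is a composition of bijections, so $(u_E)^*|_{\face_u}$ is itself a bijection, yielding the required injectivity.

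The main obstacle is not the identity itself (essentially a routine unwrapping of definitions) but the \emph{simultaneous} availability of both Bauer simplex structures together with the properness of $E$: without the simplexoid hypothesis on $B_{X^*}$ representing measures on $\face$ need not be unique, and without properness the map $P$ fails to be surjective, so the composition argument collapses. The content of the theorem is precisely that these two conditions are enough to make the factorization work.
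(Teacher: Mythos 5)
Your proposal is correct and follows essentially the same route as the paper: both factor $u_E^*|_{\face(u_E(x))}$ through the bijection of Lemma \ref{lemma-measures-measures} between $P(\Ext\face_u)$ and $P(\ex(x))$ together with the resultant maps on each side, and both invoke the Bauer-simplex structure of $\face(x)$ (via Choquet--Meyer uniqueness) to make $r_{\face}$ bijective on measures supported on the closed set $\ex(x)$. The only differences are presentational — you spell out the compactness argument for upgrading the continuous bijection to a homeomorphism and isolate the commutation identity $u_E^*\circ r_{\face_u}=r_{\face}\circ P^{-1}$ as the step to verify, which the paper asserts more tersely.
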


\begin{proof}
Let us assume that $B_{X^*}$ is a Bauer simplexoid and that $E$ is a U-suitable set for $X$. It is clear that $u_E$ is an isometric embedding. We claim that $u_E$ is a wU-embedding. Indeed, fix $x\in S_X$. By Lemma \ref{lemma-measures-measures} there exits a bijective function $\gamma\colon \face(u_E(x))\to P(\ex{(x)})$ such that $u_E^*=r_x\circ \gamma$. If a face is a Bauer simplex, then the probabilities in $P(\ex(x))$ are maximal (incidentally, this is an equivalence), since they are supported on the extreme points of the face. Then, Choquet--Meyer uniqueness theorem implies that the resultant is bijective from the maximal probability measures onto  $\face(x)$. 
\end{proof}

\subsection{Finite-dimensional spaces}

Along this section $X$ will be a finite-dimensional Banach space. We are interested in characterizing when it can be U-embedded into a $C(K)$-space. Observe that in this case ---as finite-dimensional spaces are reflexive--- U-embeddings and wU-embeddings coincide. Also, notice that any simplexoid in a finite dimensional space is a Bauer simplexoid.

By Proposition~\ref{prop-wu-and-facesT^*}, a necessary condition on $X$ to be U-embedded into some $C(K)$-space is that its dual faces $\face(x)$ are Bauer simplices. Indeed, \cite[Theorem 3.2]{Phelps1960a} implies that given $x\in S_X$, there exist a set $\{t_i\}_{i=1}^k\subset K$ and signs $\{\varepsilon_i\}_{i=1}^k$, with $k\leq n$ such that $\face(x)$ is afine homeomorphic to the convex hull of the afinelly independent set  $\{\varepsilon_i\delta_{t_i}^K\}_{i=1}^k\ (\subset S_{M(K)})$.

%We will say that a U-suitable set $E$ for $X$ is \emph{proper} whenever $E(x)\cap E=\face(x)\cap E$ for every $x\in S_X$.

\begin{theorem}\label{thm:finite-Uembedd}
A finite-dimensional Banach space $X$ can be U-embedded into a $C(K)$-space if, and only if, it admits a proper U-suitable set and its dual unit ball is a simplexoid.
\end{theorem}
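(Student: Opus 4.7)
The plan is to recognize that this theorem is essentially a corollary that packages together the necessary and sufficient conditions already established in the excerpt, simplified by the finite-dimensional setting. I would treat each direction separately and in each case identify which earlier result supplies what we need.

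For the necessity direction, I would assume $T\colon X \to C(K)$ is a U-embedding. First, Proposition \ref{prop:existence-U-suit} immediately produces a proper U-suitable set, namely $E = \ol{\mathrm{Ext}^+}^{w^*} \subset B_{X^*}$; no adjustments are needed since that proposition already yields the ``proper'' version. Second, since any U-embedding is in particular a wU-embedding, Proposition \ref{prop-wU-simplexoid} ensures that $B_{X^*}$ is a Bauer simplexoid; since we are in finite dimensions, this in particular means $B_{X^*}$ is a simplexoid (the Bauer clause is automatic in finite dimensions, as proper closed faces of a finite-dimensional simplex have finite vertex sets which are trivially closed). This finishes one implication.

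For the sufficiency direction, suppose $X$ admits a proper U-suitable set $E$ and that $B_{X^*}$ is a simplexoid. In finite dimensions, every finite-dimensional simplex is a Bauer simplex (its extreme points form a finite set, hence a $w^*$-closed one), so $B_{X^*}$ is in fact a Bauer simplexoid. All hypotheses of Theorem \ref{theoremsimplexoid-w-U} are therefore in force, and that theorem supplies a wU-embedding $u_E\colon X \to C(E)$. To upgrade this to a genuine U-embedding I would invoke the observation recorded at the start of the subsection: since $X$ is finite-dimensional (hence reflexive), we have $\NA(X) = X^*$, so the set $\NA(X)\cap S_{X^*}$ equals $S_{X^*}$ and the wU property in $C(E)$ coincides with the U property. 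Hence $u_E$ is a U-embedding of $X$ into $C(E)$.

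I do not anticipate a serious obstacle here: the entire argument is the composition ``Proposition \ref{prop:existence-U-suit} + Proposition \ref{prop-wU-simplexoid}'' on the one side, and ``Theorem \ref{theoremsimplexoid-w-U} + reflexivity'' on the other. The one point that deserves a brief sentence of justification (and is the only place where the finite dimension of $X$ is genuinely used in a non-trivial way beyond reflexivity) is the equivalence of ``simplexoid'' and ``Bauer simplexoid'' in the finite-dimensional setting; this is essentially a remark about the vertices of a finite-dimensional simplex being closed, and I would include it as a one-line observation rather than as a standalone lemma.
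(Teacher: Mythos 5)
Your proof is correct and follows essentially the same route as the paper: the paper's proof is exactly ``one direction follows from Theorem \ref{theoremsimplexoid-w-U}, the other from Propositions \ref{prop-wU-simplexoid} and \ref{prop:existence-U-suit}'', relying on the two observations made at the start of the subsection (that wU- and U-embeddings coincide for finite-dimensional, hence reflexive, spaces, and that every finite-dimensional simplexoid is a Bauer simplexoid), both of which you justify explicitly.
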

\begin{proof}
One direction follows from Theorem \ref{theoremsimplexoid-w-U}. The other follows from Propositions \ref{prop-wU-simplexoid} and \ref{prop:existence-U-suit}.
\end{proof}

Since any ball in $\mathbb{R}^2$ is a simplexoid and every U-suitable set is proper ---the set of extreme points of $B_{X^*}$ is closed--- we deduce the following corollary.

\begin{corollary}\label{cor-2-U-suitable}
A two-dimensional Banach space $X$ can be U-embedded into a $C(K)$-space if and only if it admits a U-suitable set.
\end{corollary}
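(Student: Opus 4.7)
The plan is to deduce this corollary directly from Theorem~\ref{thm:finite-Uembedd}, which already characterises finite-dimensional U-embeddability in terms of two conditions: the existence of a \emph{proper} U-suitable set and the simplexoid condition on $B_{X^*}$. The task therefore reduces to verifying that, in dimension two, both extra hypotheses are automatic whenever a U-suitable set is available. The forward direction is then one line: if $X$ is U-embedded in some $C(K)$, Proposition~\ref{prop:existence-U-suit} already supplies a proper U-suitable set, which is \emph{a fortiori} U-suitable.

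For the converse, I would start from a U-suitable set $E$. Since $\dim X^{*}=2$, the unit ball $B_{X^{*}}$ is a planar convex body, so its proper closed faces are either single points or line segments, that is, $0$- or $1$-simplices; hence $B_{X^{*}}$ is (trivially, a Bauer) simplexoid. Next, I would invoke the classical planar fact that $\Ext B_{X^{*}}$ is norm-closed: any non-extreme boundary point lies in the relative interior of a maximal boundary segment, a relative interior which provides a whole boundary neighbourhood free of extreme points. Consequently $\overline{\Ext B_{X^{*}}}^{w^{*}}=\Ext B_{X^{*}}$, and condition~(ii) of Definition~\ref{def:U-suitable} forces $E\subset \Ext B_{X^{*}}$.

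Properness of $E$ then follows from the general identity $\Ext\face(x)=\Ext B_{X^{*}}\cap \face(x)$, which holds because $\face(x)$ is itself a face of $B_{X^{*}}$: for $x\in S_{X}$, any $e^{*}\in E\cap\face(x)$ is extreme in $B_{X^{*}}$ and lies in $\face(x)$, hence is extreme in $\face(x)$, giving $E\cap\face(x)\subset E\cap\overline{\Ext\face(x)}^{w^{*}}$; the reverse inclusion is immediate. Theorem~\ref{thm:finite-Uembedd} then yields the desired U-embedding, which can be taken to be $u_{E}\colon X\to C(E)$. The only point that really needs care is the closedness of $\Ext B_{X^{*}}$, and this is precisely the planar phenomenon that prevents a direct extension of the corollary to higher dimensions.
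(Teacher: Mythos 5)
Your proposal is correct and follows essentially the same route as the paper: the paper also derives the corollary from Theorem~\ref{thm:finite-Uembedd} by observing that any ball in $\mathbb{R}^2$ is a simplexoid and that every U-suitable set is automatically proper because $\Ext B_{X^*}$ is closed in the plane. You merely spell out in more detail the two facts the paper states in one line (closedness of $\Ext B_{X^*}$ for planar convex bodies, and the identity $\Ext\face(x)=\Ext B_{X^*}\cap\face(x)$), which is a faithful elaboration rather than a different argument.
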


Again a concrete geometric property of two-dimensional spaces gives the following characterization.

\begin{corollary}\label{coro-char-2dim}
A two-dimensional Banach space $X$ can be U-embedded into a $C(K)$-space if and only if it is not G\^ateaux smooth.
\end{corollary}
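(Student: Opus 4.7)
The forward direction is Corollary~\ref{cor-gateaux}. For the converse, given a non-Gâteaux-smooth two-dimensional Banach space $X$, Corollary~\ref{cor-2-U-suitable} reduces the problem to producing a U-suitable set $E\subset B_{X^*}$.

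In dimension two the $w^*$-topology on $X^*$ coincides with the norm topology, $B_{X^*}$ is a centrally symmetric planar convex body, and $S_{X^*}$ is a Jordan curve. Non-Gâteaux-smoothness at some $x\in S_X$ forces $\face(x)$ to be non-trivial, which in the plane means that $S_{X^*}$ contains a non-degenerate closed segment $[a,b]$; by central symmetry the antipodal segment $[-a,-b]$ is also on $S_{X^*}$. My plan is to parametrize $S_{X^*}$ by a homeomorphism $\gamma\colon \mathbb{R}/\mathbb{Z}\to S_{X^*}$ intertwining the antipodal map with the half-shift, i.e.\ $\gamma(t+\tfrac12)=-\gamma(t)$ (possible because $x\mapsto-x$ is a fixed-point-free orientation-preserving involution of the topological circle $S_{X^*}$), normalized so that $a=\gamma(t_1)$ and $b=\gamma(t_2)$ with $0\le t_1<t_2\le \tfrac12$. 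The candidate U-suitable set is then
$$
E := \gamma\bigl([t_2,\,t_1+\tfrac12]\bigr)\cap \Ext B_{X^*},
$$
the ``antipodal half-arc'' from $b$ to $-a$, restricted to extreme points.

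The verification has three ingredients, all routine once the set-up is in place. First, $E$ is $w^*$-closed, because $\Ext B_{X^*}$ is closed in $S_{X^*}$: in two dimensions a limit of extreme points cannot lie in the relative interior of a flat face, since a small enough arc-neighborhood of such an interior point lies entirely inside that flat segment, whose only extreme points are its endpoints. Second, $E\cap(-E)=\emptyset$, since the arcs $\gamma([t_2,t_1+\tfrac12])$ and $\gamma([t_2+\tfrac12,t_1+1])$ are disjoint subarcs of the circle by construction. Third, $E\cup(-E)=\overline{\Ext B_{X^*}}^{w^*}=\Ext B_{X^*}$, because the union of the two arcs is $S_{X^*}$ minus the two open flat segments $(a,b)$ and $(-a,-b)$, and $\Ext B_{X^*}$ already omits the relative interiors of all flat faces. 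The most delicate point is the combination of this closure claim with the existence of the antipodally equivariant parametrization; once these are fixed the rest is bookkeeping on the Jordan curve, and Corollary~\ref{cor-2-U-suitable} then produces the desired U-embedding of $X$ into $C(E)$.
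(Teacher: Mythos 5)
Your proof is correct and takes essentially the same approach as the paper: the forward direction is Corollary~\ref{cor-gateaux}, and the converse reduces via Corollary~\ref{cor-2-U-suitable} to exhibiting a U-suitable set, which in both cases is the "half" of $\Ext B_{X^*}$ lying on the closed arc of $S_{X^*}$ joining the two flat segments $\pm\face(x)$. The paper obtains the very same set more economically, as $E=\{e^*\in\Ext B_{X^*}:\ e^*(z)>0\}$ where $z\in S_X$ is annihilated by the midpoint of $\face(x)$ (closedness being automatic since no extreme point lies on $\ker z\cap S_{X^*}$), which sidesteps your antipodally equivariant parametrization of the Jordan curve.
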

\begin{proof}
On one hand, we now that a Banach space that can be U-embedded into a $C(K)$-space can not be G\^ateaux. On the other hand, assume that $X$ is not G\^ateaux and pick $x\in S_X$ where the norm is not G\^ateaux smooth. Let $x^*$ and $y^*$ be the two extreme points of $\face(x)$ and set $z^*=(x^*+y^*)/2$. Take now $z\in S_X$ such that $z^*(z)=0$. Then, it is easy to see that $E=\{e^*\in\Ext B_{X^*}: e^*(z)>0\}$ is a U-suitable set for $X$. The conclusion follows from Corollary \ref{cor-2-U-suitable}
\end{proof}

Thus, a consequence of Proposition \ref{prop:retraction} below and the argument in the proof of Corollary \ref{coro-char-2dim}, the space $C[0,1]$ is ``U-embedding''-universal for the $2$-dimensional non-G\^ateaux spaces. Indeed, every such a space has a U-suitable set that is simultaneously a retract and a zero-set of the interval $[0,1]$. We can then use Proposition \ref{prop:retraction} below.

\begin{remark}
\rm
The corollaries above are obtained as a consequence of some previous theorems. It is worth to notice that, for finite-dimensional spaces, it is not difficult to get them through explicit computations. For example, if we assume $X$ being a finite-dimensional polyhedral space with simplexoid dual ball, then we also know that $B_{X^*}$ has an even number of finite extreme points, say $2n$. Thus, one can just take $E\subset \Ext B_{X^*} $ a selection of $n$ ``proper'' extreme points (namely, for each $x^*\in \Ext B_{X^*}$, we can just take either $x^*$ or $-x^*$). Trivially, such set $E$ satisfies the conditions of a U-suitable set, and thus $X$ can be embedded in $C(E) \equiv C(\{1,2,...,n\})$ through the operator $u_E$. Also, even though it is obtained as a particular consequence of Theorem \ref{thm:finite-Uembedd}, it is easy to explicitly prove that every $x^*\in X^*$ has a unique Hahn--Banach extension through $u_E$. \eor
\end{remark}

\begin{remark}
\rm

At first glance we may think that, at least in finite-dimensional spaces, the notion of U-embeddability might have something to do with the one of polyhedrality. This is not so, and the example below illustrates the gap between both concepts.

\medskip

{\bf Example}: Let $\{\theta_n\}_n\subset[0,\pi/2)$ be an increasing sequence such that $\theta_1=0$ and $\lim_n \theta_n=\pi/2$. Set the following vectors of $\mathbb{R}^2$: $x_n^*:=(\cos(\theta_n),\sin(\theta_n))$ and $x_{\bullet}^*:=(0,1)$. Define $X$ as $\mathbb{R}^2$ endowed with the norm
\[
\|(x,y)\|=\max\{\sup_n\{|\langle(x,y),x^*_n\rangle|\},|\langle(x,y),x^*_{\bullet}\rangle|\}.
\]
By definition, the set  $E=\{\pm x_n^*\colon n\in\mathbb{N}\}\cup\{\pm x_{\bullet}^*\}$ is a $1$-norming subset of $B_{X^*}$. It is additionally closed so ---by Milman's converse of the Krein-Milman theorem--- $E$ is the set of extreme points of $B_{X^*}$, and $B_{X^*}$ is the convex hull of $E$.\\
\centerline{\begin{tikzpicture}[scale=0.7]
\draw[thick, lightgray] (-4,0) --(4,0);
\draw[thick, lightgray] (0,-4) --(0,4);
\draw(0:3) node[right]{$x_1^*$}--(30:3)node[right]{$x_2^*$} --(55:3) node[right]{$x_3^*$}--(75:3)node[anchor= south west]{$x_4^*$}--(90:3) node[above]{$x_\bullet^*$}--(-3,0)
-- (210:3)--(235:3)--(255:3)--(270:3)--(3,0);
\end{tikzpicture}}
Let us consider the compact set $K=\{x_n^*\colon n\in\mathbb{N}\}\cup\{x_{\bullet}^*\}\subset B_{X^*}$. Set $F\colon K\to X^*$ by $F(t):=t$ for every $t\in K$. This function defines an isometric embedding $T_F\colon X\to C(K)$ given by
\[
T_F(x)(t):=\langle x, F(t)\rangle
\]
for every $x\in X$ and every $t\in K$. It is easy to see that $T_F$ is indeed a U-embedding by Corollary \ref{coro-char-2dim}, as $X$ is clearly non-G\^ateaux differentiable. In particular, $K$ is indeed a U-suitable set. Hence, this is an example of a Banach space $X$ which can be U-embedded in $C(K)$, $B_{X^*}$ is a simplexoid, but it is not a polyhedral space, as it has infinitely many faces.

\medskip

Corollary \ref{cor:c-beta-non-poly} below gives an example of an  infinite-dimensional space which is also non-polyhedral although it is U-embedded in $C(K)$.\eor

\end{remark}

\subsection{Embeddings between $C(K)$-spaces}

Through this subsection we characterize the pairs of compact Hausdorff spaces $(K,S)$ for which $C(K)$ can be U-embedded into $C(S)$. The following result shows that the natural isometric embeddings between $C(K)$-spaces are never U-embeddings.

\begin{lemma}\label{lemma_compositionOperator}
Let $h\colon S\to K$ be a continuous onto map between the compact Hausdorff spaces $S$ and $K$. The isometric embedding $C_h\colon C(K)\to C(S)$ given by $C_h(f)=f\circ h$ for $h\in C(K)$ is a U-embedding if, and only if, $h$ is one-to-one ---so $S$ and $K$ are homeomorphic---, and if, and only if, $C_h$ is onto.
\end{lemma}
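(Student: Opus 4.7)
The plan is to prove the cyclic implications: $h$ one-to-one $\Rightarrow C_h$ onto $\Rightarrow C_h$ is a U-embedding $\Rightarrow h$ one-to-one. The crucial preliminary computation is to identify the map $F_{C_h}:S\to C(K)^*$, which by definition sends $s\mapsto C_h^*(\delta_s^S)$. For every $f\in C(K)$ we have $\langle f, C_h^*(\delta_s^S)\rangle=(f\circ h)(s)=\langle f,\delta_{h(s)}^K\rangle$, so that $F_{C_h}(s)=\delta_{h(s)}^K$ for all $s\in S$. In particular, $F_{C_h}(S)\subset \Ext B_{C(K)^*}$, and the \ucore{} of $C_h$ coincides with $S$.

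For $h$ injective $\Rightarrow$ $C_h$ onto: a continuous bijection between compact Hausdorff spaces is a homeomorphism, so $h^{-1}\in C(K,S)$ and every $g\in C(S)$ satisfies $g=(g\circ h^{-1})\circ h=C_h(g\circ h^{-1})$. For $C_h$ onto $\Rightarrow$ $C_h$ is a U-embedding: since $C_h$ is a surjective isometric embedding, it is a linear surjective isometry, hence $(C_h)^*\colon C(S)^*\to C(K)^*$ is a bijective isometry as well. Consequently $(C_h)^{*\,-1}(x^*)$ is a singleton for every $x^*\in C(K)^*$, so certainly $\HB_{C_h}(x^*)$ is a singleton and $C_h$ is a U-embedding.

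The remaining implication $C_h$ U-embedding $\Rightarrow h$ injective is the key step. Suppose toward a contradiction that there exist $t\neq s$ in $S$ with $h(t)=h(s)=:k_0$. Then by the computation above,
\[
C_h^*(\delta_t^S)=\delta_{h(t)}^K=\delta_{k_0}^K=\delta_{h(s)}^K=C_h^*(\delta_s^S),
\]
and both $\delta_t^S$ and $\delta_s^S$ lie in $S_{C(S)^*}$ while $\|\delta_{k_0}^K\|=1$. Thus $\delta_t^S$ and $\delta_s^S$ are two distinct Hahn--Banach extensions of $\delta_{k_0}^K\in S_{C(K)^*}$ through $C_h$, contradicting the U-embedding property. (Equivalently, one may quote Lemma~\ref{lemma_proper} applied to $F_{C_h}$, observing $\|F_{C_h}(t)\|=1$.) Hence $h$ must be one-to-one, closing the cycle.

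The argument is essentially formal once the identification $F_{C_h}(s)=\delta_{h(s)}^K$ is made; there is no real obstacle, only the mild issue that the three equivalent conditions must be linked in the proper order so that compact-Hausdorff bijectivity can be invoked exactly once to upgrade ``$h$ one-to-one'' to ``$h$ homeomorphism'' and thereby to ``$C_h$ onto''.
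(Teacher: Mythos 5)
Your proof is correct and follows essentially the same route as the paper: both identify $F_{C_h}(s)=\delta^K_{h(s)}$ and obtain the key implication (U-embedding $\Rightarrow$ $h$ injective) by exhibiting $\delta^S_t$ and $\delta^S_s$ as two distinct Hahn--Banach extensions of $\delta^K_{h(t)}$, i.e.\ by the argument of Lemma~\ref{lemma_proper}. Your cyclic organization is slightly more complete than the paper's write-up, since it makes explicit the direction ``$h$ one-to-one $\Rightarrow$ $C_h$ onto $\Rightarrow$ U-embedding'' that the paper leaves implicit.
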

\begin{proof}
It is easy to see that the isometry $C_h$ coincides with $T_F$ for $F(s):=\delta_{h(s)}$, $s\in S$ (see equations (\ref{eq-varphi}) and (\ref{eq-phi}) above). If $h$ is not one-to-one then there exist $s_1$ and $s_2$ in $S$ such that $s_1\not=s_2$ and $h(s_1)=h(s_2)$. Thus, $F(s_1)=F(s_2)\in S_{M(K)}$. An appeal to Lemma~\ref{lemma_proper} gives that $T_F\ (=C_h)$ is not a U-embedding.

Notice that $C_h$ is always one-to-one. Thus, $C_h$ is onto if, and only if, it is an isometry from $C(K)$ onto $C(S)$. This happens if, and only if, $S$ and $K$ are homeomorphic.
\end{proof}

Let us recall the following result from \cite{Aviles2014}. We need first a definition:

\begin{definition}\label{def-u-v}
Let $S$ be a Hausdorff compact topological space. Let $f\in C(S)$ and $0<\sigma<\ee$. Let  
$u^{f}_{\sigma,\ee}\in B_{C(S)}$ such that 
$$
u^{f}_{\sigma,\ee}|_{\{f\ge 1-\sigma\}}\equiv 1\ \ \hbox{and }\  u^{f}_{\sigma,\ee}|_{\{f\le 1-\ee\}}\equiv 0,
$$
and let 
$v^{f}_{\sigma,\ee}\in B_{C(S)}$ such that 
$$
v^{f}_{\sigma,\ee}|_{\{f\le -1+\sigma\}}\equiv 1\ \ \hbox{and }\  v^{f}_{\sigma,\ee}|_{\{f\ge -1+\ee\}}\equiv 0.
$$
Given $\mu\in M(S)$, let us define $\mu^{f,1}_{\sigma,\ee}$ and $\mu^{f,2}_{\sigma,\ee}$ in $M(S)$ by
$$
\mu^{f,1}_{\sigma,\ee}(g):=\int_{S}g.u^{f,1}_{\sigma,\ee}\, d \mu\ \ \hbox{and }\ \mu^{f,2}_{\sigma,\ee}(g):=\int_{S}g.v^{f,1}_{\sigma,\ee}\, d \mu\ \ \hbox{for all }g\in C(S).
$$
\end{definition}

\begin{lemma}\label{lemma_almostattaining}
Let $S$ be a Hausdorff compact topological space. Let $f\in B_{C(S)}$ and $\mu\in B_{M(S)}$. Then, for every $0<\sigma<\ee<1$, it holds that

{\em (i)} $\|\mu^{f,1}_{\sigma,\ee}\|\le 1$ and $\|\mu^{f,1}_{\sigma,\ee}\|\le 1$.

{\em (ii)} $\|(\mu^{f,1}_{\sigma,\ee})^+\|+\|(\mu^{f,2}_{\sigma,\ee})^-\|\ge 1-(1-\langle f,\mu\rangle)/\sigma$.

{\em (iii)}  $\|(\mu^{f,1}_{\sigma,\ee})^-\|+\|(\mu^{f,2}_{\sigma,\ee})^+\|\le (1-\langle f,\mu\rangle)/\sigma$.

{\em (iv)}  $\|\mu-\mu^{f,1}_{\sigma,\ee}-\mu^{f,2}_{\sigma,\ee}\|\le (1-\langle f,\mu\rangle)/\sigma$.
\end{lemma}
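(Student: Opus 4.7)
Throughout I abbreviate $u:=u^{f}_{\sigma,\ee}$, $v:=v^{f}_{\sigma,\ee}$, $\nu_1:=\mu^{f,1}_{\sigma,\ee}=u\mu$ and $\nu_2:=\mu^{f,2}_{\sigma,\ee}=v\mu$, and adopt the standard Urysohn choice $0\le u,v\le 1$. Since $\sigma<\ee<1$, the supports of $u$ and $v$ lie in the disjoint open sets $\{f>1-\ee\}$ and $\{f<-1+\ee\}$ respectively. Because $\mu^+$ and $\mu^-$ are mutually singular and $u,v\ge 0$, the Jordan decompositions of $\nu_i$ read $\nu_1^\pm=u\mu^\pm$ and $\nu_2^\pm=v\mu^\pm$. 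Part (i) is then immediate: $\|\nu_i\|=\int w_i\,d|\mu|\le\|\mu\|\le 1$, with $w_1=u$ and $w_2=v$.

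The whole proof revolves around the identity
\[
1-\langle f,\mu\rangle \;=\; \bigl(1-\|\mu\|\bigr)\;+\;\int(1-f)\,d\mu^+\;+\;\int(1+f)\,d\mu^-, \qquad (\ast)
\]
obtained by splitting $\mu=\mu^+-\mu^-$ and using $\int f\,d\mu^+=\|\mu^+\|-\int(1-f)\,d\mu^+$ together with $\int f\,d\mu^-=-\|\mu^-\|+\int(1+f)\,d\mu^-$; each summand on the right of $(\ast)$ is non-negative. Along with $(\ast)$ I shall use two elementary pointwise bounds: on the support of $u$ one has $1+f>2-\ee>\sigma$, hence $u\le 1\le(1+f)/\sigma$ holds throughout $S$; symmetrically, $v\le(1-f)/\sigma$ on all of $S$.

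With these tools (iii) is essentially a single line:
\[
\|\nu_1^-\|+\|\nu_2^+\|=\int u\,d\mu^-+\int v\,d\mu^+\le\tfrac{1}{\sigma}\Bigl[\int(1+f)\,d\mu^-+\int(1-f)\,d\mu^+\Bigr]\le\tfrac{1-\langle f,\mu\rangle}{\sigma}
\]
by $(\ast)$. For (iv), I note that $1-u-v\in[0,1]$ vanishes outside $D:=\{-1+\sigma<f<1-\sigma\}$, where both $1-f\ge\sigma$ and $1+f\ge\sigma$; bounding $(1-u-v)\le(1-f)/\sigma$ against $\mu^+$ and $(1-u-v)\le(1+f)/\sigma$ against $\mu^-$ and invoking $(\ast)$ yields $\|\mu-\nu_1-\nu_2\|=\int(1-u-v)\,d|\mu|\le(1-\langle f,\mu\rangle)/\sigma$.

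The only step that is not completely mechanical is (ii). Using $u\ge\chi_{\{f\ge 1-\sigma\}}$ and $v\ge\chi_{\{f\le -1+\sigma\}}$ I obtain $\|\nu_1^+\|+\|\nu_2^-\|\ge\|\mu\|-a-b$, where $a:=\mu^+(\{f<1-\sigma\})$ and $b:=\mu^-(\{f>-1+\sigma\})$. On the defining set of $a$ one has $1-f\ge\sigma$, and on the defining set of $b$ one has $1+f\ge\sigma$, so $\sigma(a+b)\le\int(1-f)\,d\mu^++\int(1+f)\,d\mu^-$, which by $(\ast)$ rewrites as $a+b\le[(1-\langle f,\mu\rangle)-(1-\|\mu\|)]/\sigma$. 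Substituting and collecting terms gives $\|\nu_1^+\|+\|\nu_2^-\|\ge 1+(1-\|\mu\|)(1/\sigma-1)-(1-\langle f,\mu\rangle)/\sigma$, and the desired bound $\ge 1-(1-\langle f,\mu\rangle)/\sigma$ follows since $\sigma<1$ and $\|\mu\|\le 1$. The main (minor) obstacle I anticipate is precisely this last accounting: one must keep the non-negative deficit $1-\|\mu\|$ visible in $(\ast)$ in order to absorb the unfavourable factor $1/\sigma>1$.
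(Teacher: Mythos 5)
Your proof is correct, and in fact the paper itself offers no proof to compare against: Lemma~\ref{lemma_almostattaining} is simply quoted from \cite{Aviles2014}. Your argument is a clean, self-contained verification: the identity $1-\langle f,\mu\rangle=(1-\|\mu\|)+\int(1-f)\,d\mu^{+}+\int(1+f)\,d\mu^{-}$ with all three summands non-negative is exactly the right bookkeeping device, the pointwise bounds $u\le(1+f)/\sigma$, $v\le(1-f)/\sigma$ and $1-u-v\le\min\{1-f,1+f\}/\sigma$ give (iii) and (iv) immediately, and your handling of (ii) --- keeping the deficit $1-\|\mu\|$ explicit so that it absorbs the factor $1/\sigma>1$ --- is precisely the point where a careless estimate would fail. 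One remark worth making explicit: Definition~\ref{def-u-v} only requires $u^{f}_{\sigma,\ee},v^{f}_{\sigma,\ee}\in B_{C(S)}$, whereas your proof (in particular the identification of the Jordan decompositions $\nu_i^{\pm}=w_i\mu^{\pm}$ and the bound $0\le 1-u-v\le 1$) uses $0\le u,v\le 1$. This extra hypothesis is genuinely needed --- if $u$ were allowed to dip below $0$ in the transition region, parts (ii)--(iv) could fail with the stated constants --- but it is clearly the intended reading, since the functions are produced by Urysohn's lemma; it would be worth flagging when the lemma is invoked.
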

The following will be a useful characterization.

\begin{theorem}\label{theorem_cks}
Let $K$ and $S$ be compact Hausdorff spaces. An isometric embedding $T\colon C(K)\to C(S)$ is a U-embedding if, and only if, there exist a closed subset $S_0$ of $S$, an homeomorphism $h\colon K\to S_0$, and a continuous function $\varepsilon\colon K\to \{\pm 1\}$ such that

{\em (i)} $F_T\big(h(k)  \big)=\varepsilon(k)\delta^K_k$ for every $k\in K$, and 

{\em (ii)} $\|F_T(s)\|<1$ for every $s\in S\setminus S_0$. 
\end{theorem}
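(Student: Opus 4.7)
The theorem is a biconditional, and I will tackle each direction separately. For the sufficiency, the strategy is to display the unique Hahn--Banach extension explicitly; for the necessity, the data $(S_0, h, \varepsilon)$ will be read off directly from the U-core machinery of Subsection \ref{subsec:U-core}.

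\textbf{Sufficiency.} Assume the structure $(S_0, h, \varepsilon)$ exists. Given $\mu \in M(K)$, my candidate for the unique Hahn--Banach extension is $\nu_0 \in M(S)$ supported on $S_0 = h(K)$ and defined by
\[
\int_S g \, d\nu_0 := \int_K g(h(k))\,\varepsilon(k)\, d\mu(k)
\]
for $g \in C(S)$; equivalently, $\nu_0 = h_*(\varepsilon\mu)$. Using (i), $T(f)(h(k)) = \langle f, F_T(h(k))\rangle = \varepsilon(k)\,f(k)$, so $\int T(f)\, d\nu_0 = \int \varepsilon(k)^{2} f(k)\, d\mu(k) = \int f\, d\mu$, i.e. $T^*\nu_0 = \mu$, and $\|\nu_0\| = \|\mu\|$ since $h$ is a homeomorphism and $|\varepsilon|\equiv 1$. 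For uniqueness, I take any Hahn--Banach extension $\nu$ and decompose $\nu = \nu' + \nu''$ with $\nu' := \nu|_{S_0}$ and $\nu'' := \nu|_{S\setminus S_0}$, so that $\|\nu\|=\|\nu'\|+\|\nu''\|$. The same change of variables as above gives $T^*\nu' = \varepsilon\cdot (h^{-1})_*\nu'$, whence $\|T^*\nu'\| = \|\nu'\|$. On the other hand, $|T(f)(s)| \le \|f\|\cdot\|F_T(s)\|$ yields $\|T^*\nu''\| \le \int \|F_T\|\, d|\nu''|$, and since $\|F_T\| < 1$ pointwise on $S\setminus S_0$ which carries $|\nu''|$, the right-hand side is strictly less than $\|\nu''\|$ whenever $\nu''\neq 0$. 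Combining $\|\mu\|=\|T^*\nu\|\le \|T^*\nu'\|+\|T^*\nu''\|$ with $\|\mu\|=\|\nu\|=\|\nu'\|+\|\nu''\|$ forces $\nu''=0$; then $\nu$ lives on $S_0$ and unwinding $T^*\nu = \mu$ gives $\nu = \nu_0$.

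\textbf{Necessity.} Assume $T$ is a U-embedding and set $S_0 := \Az$, the U-core of $T$. Since $\Ext B_{M(K)} = \dd^K(K) \dcup (-\dd^K(K))$ is $w^*$-closed (a disjoint union of two $w^*$-compact copies of $K$) and $F_T$ is $w^*$-continuous, $S_0$ is closed in $S$. By Proposition \ref{propo_necessary}, $F_T|_{S_0}$ is a homeomorphism onto $\hbox{Ext}^+ := F_T(S_0)$ and $\|F_T(s)\| < 1$ on $S\setminus \ol{S_0} = S\setminus S_0$, which is (ii). By property (viii) of the U-core, $\hbox{Ext}^+ \cap \hbox{Ext}^- = \emptyset$, so for each $k \in K$ exactly one of $\pm \dd_k^K$ lies in $\hbox{Ext}^+$; define $\varepsilon(k) \in \{\pm 1\}$ by requiring $\varepsilon(k)\dd_k^K \in \hbox{Ext}^+$. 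The sets $\{\varepsilon = 1\} = (\dd^K)^{-1}(\hbox{Ext}^+)$ and $\{\varepsilon = -1\} = (\dd^K)^{-1}(-\hbox{Ext}^+)$ are both $w^*$-closed in $K$ (since $\hbox{Ext}^+$ is $w^*$-compact and $\dd^K$ is a homeomorphism onto its image), disjoint, and cover $K$, hence each is clopen and $\varepsilon$ is continuous. Finally, put $h(k) := (F_T|_{S_0})^{-1}(\varepsilon(k)\dd_k^K)$: this is a continuous bijection $K \to S_0$ (surjectivity from the definition of $\Az$, injectivity from that of $\dd^K$), hence a homeomorphism by compactness, and (i) holds by construction.

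\textbf{Main obstacle.} The one nontrivial analytic step is the strict inequality $\|T^*\nu''\| < \|\nu''\|$ in the sufficiency. The norm on $M(K)$ is only $w^*$-lower semicontinuous, so $s \mapsto \|F_T(s)\|$ is merely Borel measurable on $S$; nevertheless, the pointwise strict inequality $\|F_T\| < 1$ on $S\setminus S_0$ combined with the sublevel decomposition $S\setminus S_0 = \bigcup_n \{\|F_T\| \le 1 - 1/n\}$ yields the required strict integral inequality as soon as $|\nu''|(S\setminus S_0) > 0$. Every remaining step is either a direct change-of-variable computation or an immediate consequence of an already-established property of the U-core.
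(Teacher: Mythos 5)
Your proof is correct. The necessity half follows essentially the same route as the paper: both read off $S_0$ from the preimage under $F_T$ of the extreme points of $B_{M(K)}$, invoke Proposition \ref{propo_necessary} (equivalently Proposition \ref{prop:U_homeoT^*}) for the homeomorphism and the strict inequality off $S_0$, and define $\varepsilon$ by the sign with which $\dd_k^K$ is hit; your justification of the continuity of $\varepsilon$ via the clopen partition $(\dd^K)^{-1}(\hbox{Ext}^+)\dcup(\dd^K)^{-1}(-\hbox{Ext}^+)$ is in fact more explicit than the paper's ``it is easy to show''. (The paper writes $S_0:=F_T^{-1}(\dd(K))$, which should be read as $F_T^{-1}(\pm\dd(K))=\Az$, exactly your choice.)

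The sufficiency half is where you genuinely diverge. The paper argues by approximation: it takes two Hahn--Banach extensions $\nu_1,\nu_2$ of $\mu$, picks almost-norming functions $f_n$, exhausts $S\setminus h(K)$ by compact sets on which $\sup\|F_T\|<1$, and invokes the quantitative estimates of Lemma \ref{lemma_almostattaining} to conclude that each $\nu_i$ is supported on $h(K)$, after which the change of variables $g=\varepsilon\cdot(f\circ h)$ forces $\nu_1=\nu_2$. You replace all of this with a direct decomposition $\nu=\nu|_{S_0}+\nu|_{S\setminus S_0}$, the identity $\|T^*(\nu|_{S_0})\|=\|\nu|_{S_0}\|$, and the strict integral inequality $\|T^*(\nu|_{S\setminus S_0})\|\le\int\|F_T\|\,d|\nu|_{S\setminus S_0}|<\|\nu|_{S\setminus S_0}\|$ when the latter is nonzero, which you correctly justify via lower semicontinuity of $s\mapsto\|F_T(s)\|$ and the sublevel decomposition. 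This bypasses Lemma \ref{lemma_almostattaining} entirely, is fully rigorous where the paper only sketches, and in addition exhibits the unique extension explicitly as $h_*(\varepsilon\mu)$; what it costs is nothing beyond standard facts about total variation of mutually singular restrictions of a Radon measure. Both arguments exploit the same two mechanisms (no mass can sit where $\|F_T\|<1$, and on $S_0$ the operator $T^*$ acts as a sign-twisted pushforward), so the difference is one of technique rather than of underlying idea.
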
 

\begin{proof}
Let us assume first that $T$ is a U-embedding. Put 
\begin{equation}\label{eq-S0}
S_0:=F_T^{-1}\big(\dd(K)\big)\ (\subset S).
\end{equation}
Obviously, $S_0$ is closed. Proposition \ref{prop:U_homeoT^*} above shows that $F_T|_{S_0}:S_0\rightarrow \dd(K)$ is a $w^*$-$w^*$-homeomorphism from $S_0$ onto $\dd(K)$. Put $h:=(F_T|_{S_0})^{-1}\circ\dd^K:K\rightarrow S_0$. It is a $w^*$-$w^*$-homeomorphism from $K$ onto $S_0$. The equation $T^*(\dd_{h(k)})=\ee(k)\dd^{K}_{k}$ defines a mapping $\ee=K\rightarrow \{\pm 1\}$. It is easy to show that $\ee$ is continuous. 
Finally, the fact that $\|F_T(s)\|<1$ for every $s\in S\setminus S_0$ follows from Proposition \ref{propo_necessary}.

Let us now show a sketch of a proof for the reciprocal. Assume $T\colon C(K)\to C(S)$ is an isometric embedding with the properties in the statement.  

Pick $\mu\in S_{M(K)}$ and take $\nu_1$ and $\nu_2$ in $S_{M(S)}$ such that $T^*(\nu_1)=T^*(\nu_2)=\mu$. Choose a sequence of functions $\{f_n\}_n\subset S_{C(K)}$ such that $\{\langle f_n,\mu\rangle\}_n$ is increasingly convergent to $1$. This, in particular, means that the sequence $\{\langle T(f_n),\nu_i\rangle\}_n$ also increasingly converges to $1$ for $i=1,2$. Let $Z$ be a compact subset of $S\setminus h(K)$ such that $\nu_i\Big(\big(S\setminus h(K)\big)\setminus Z\Big)$ is small for $i=1,2$. Since $\|F_T(s)\|<1$ for every $s\in S\setminus h(K)$, we have $\sup\{\|F_T(s)\|:\ s\in Z\}<1$. Then we can find $n_0\in\N$ such that $\sigma^+\big(T(f_n)\big)\cup \sigma^-\big(T(f_n)\big)\subset S\setminus Z$ for every $n\ge n_0$. An easy consequence of Lemma~\ref{lemma_almostattaining} shows then that the measures $\nu_i$, $i=1,2$, are supported on $h(K)$. Given any function $f\in S_{C(S)}$, consider the function $g= \varepsilon\cdot (f\circ h)$, that belongs to $B_{C(K)}$. Then, for $s\in h(K)$ it holds that there exists $t\in K$ with $s=h(t)$ and
\[
T(g)(s)=\langle g, F_T\big(h(t)\big)\rangle= \langle g,\varepsilon(t)\delta^K_t\rangle=\varepsilon(t)g(t)=f(s).
\]
Since the support of $\nu_i$ is contained in $h(K)$, it follows that
\[
\langle f, \nu_i\rangle=\langle T(g), \nu_i\rangle= \langle g, T^*(\nu_i)\rangle=\langle g, \mu\rangle,\]
which, in particular, implies that $\nu_1=\nu_2$ as we wanted to show.
\end{proof}

\begin{remark}
\label{rem-u-core}
\rm Observe that in case that a linear isometry $T:C(K)\rightarrow C(S)$ is a U-embedding, then the set $S_0$ defined in (\ref{eq-S0}) is the U-core $\Az$ of $T$ (see Definition \ref{def:U-core} above). Indeed, given $k\in K$ there is a single $s_0\in S_0$ such that either $T^*(\dd_{s_0})=\dd_k$ or $T^*(-\dd_{s_0})=\dd_k$. This element is precisely $h(k)$. No element in $\dd(S\setminus S_0)$ can be applied to some $\dd_k$ for $k\in K$, since $\|T^*(\dd_{s})\|<1$.\eor  
\end{remark}

The following example is a consequence of Theorem \ref{theorem_cks}. It is interesting due to the fact that the space of convergent sequences $c$ has property $\beta$ but it is not polyhedral.

\begin{corollary}\label{cor:c-beta-non-poly}
The Banach space $c$ can be U-embedded into $C[0,1]$.
\end{corollary}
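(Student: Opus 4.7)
The plan is to identify $c$ with $C(K)$ for $K := \{0\} \cup \{1/n : n \in \N\} \subset [0,1]$ and then invoke the sufficient direction of Theorem \ref{theorem_cks} with $S := [0,1]$, $S_0 := K$, $h : K \to S_0$ the identity inclusion, and $\varepsilon \equiv 1$. The task thus reduces to producing a $w^*$-continuous map $F : [0,1] \to M(K)$ with $F(k) = \dd_k^K$ for every $k \in K$ and $\|F(t)\| < 1$ for each $t \in [0,1] \setminus K$. Such an $F$ defines via (\ref{eq-phi}) an operator $T_F : C(K) \to C[0,1]$, which is an isometry by Lemma \ref{lemma-isometry} since $F([0,1]) \cup -F([0,1]) \supset \pm\dd(K) = \Ext B_{M(K)}$, and Theorem \ref{theorem_cks} then certifies $T_F$ as a U-embedding.

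For the construction, I would fix a sequence $\{\ee_n\}_{n\ge 1} \subset (0,1)$ with $\ee_n \to 0$ (say $\ee_n := 1/n$), and on each interval $[1/(n+1),1/n]$ parametrized by $s := (t - 1/(n+1))/(1/n - 1/(n+1)) \in [0,1]$, set
$$
F(t) := \bigl(1 - \ee_n s(1-s)\bigr)\bigl[(1-s)\dd_{1/(n+1)} + s\,\dd_{1/n}\bigr],
$$
together with $F(0) := \dd_0$. At $s=0$ and $s=1$ one obtains $F(1/(n+1)) = \dd_{1/(n+1)}$ and $F(1/n) = \dd_{1/n}$, so the pieces agree across consecutive intervals and $F(k) = \dd_k^K$ for every $k \in K$. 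In the interior of each interval, $F(t)$ is a positive measure of total mass $1 - \ee_n s(1-s) < 1$, which secures the norm condition of Theorem \ref{theorem_cks}.

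Continuity of $F$ on each closed interval $[1/(n+1), 1/n]$ is immediate, since there it maps into the two-dimensional subspace $\sp\{\dd_{1/(n+1)}, \dd_{1/n}\}$ of $M(K)$. The main obstacle is $w^*$-continuity at $t = 0$: for $f \in C(K)$ and $t \in [1/(n+1), 1/n]$ one has
$$
\langle f, F(t)\rangle = \bigl(1 - \ee_n s(1-s)\bigr)\bigl[(1-s) f(1/(n+1)) + s\, f(1/n)\bigr],
$$
and this must tend to $f(0)$ as $t \to 0^+$ even though $s$ can range freely in $[0,1]$ as $n \to \infty$. Continuity of $f$ at $0$ makes the bracket tend to $f(0)$ uniformly in $s$, while $\ee_n \to 0$ makes the prefactor tend to $1$ uniformly in $s$, giving the required limit. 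This delicate balance is precisely what forces $\ee_n \to 0$: a fixed positive $\ee$ would leave a residual dip of size $\ee/4$ at midpoints $s = 1/2$, so one would obtain $\langle f, F(t)\rangle \to (1-\ee/4)f(0) \neq f(0)$ along a subsequence and continuity at $0$ would fail. Once continuity is verified, conditions (i) and (ii) of Theorem \ref{theorem_cks} hold by construction, and the U-embedding of $c$ into $C[0,1]$ follows.
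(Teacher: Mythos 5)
Your proof is correct and follows the same overall strategy as the paper: identify $c$ with $C(K)$ for $K$ a convergent sequence, build an explicit $w^*$-continuous curve $F\colon[0,1]\to B_{M(K)}$ that passes through the Dirac deltas at the points of $K$ and has norm strictly below $1$ elsewhere, and invoke the sufficient direction of Theorem~\ref{theorem_cks}. The one substantive difference is the choice of interpolating arc, and here your version is actually an improvement: the paper joins $\delta_{n+1}$ to $\delta_n$ by the Bezier curve with middle control point $0$, i.e.\ $s\mapsto(1-s)^2\delta_{n+1}+s^2\delta_n$, whose norm dips to $1/2$ at the midpoint of \emph{every} interval $[1/(n+1),1/n]$. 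Testing against the constant function $1\in C(K)$ along the sequence of midpoints shows that this $F$ is \emph{not} $w^*$-continuous at $0$ (the values converge to $1/2$, not to $1=\langle 1,\delta_\infty\rangle$), so the paper's construction as written has a gap precisely at the point you single out. Your factor $1-\ee_n s(1-s)$ with $\ee_n\to 0$ makes the dip vanish asymptotically, and your uniform-in-$s$ argument for continuity at $0$ is exactly what is needed; your remark that a fixed $\ee$ would leave a residual defect of $\ee/4$ is the correct diagnosis of why the dip must be damped. The remaining steps (isometry via Lemma~\ref{lemma-isometry}(iv), verification of conditions (i) and (ii) of Theorem~\ref{theorem_cks} with $S_0=K$, $h$ the inclusion and $\varepsilon\equiv 1$) are all in order, so your argument is complete; alternatively, one could repair the paper's version the same way by shrinking the Bezier dip with $n$, e.g.\ using control points $\delta_{n+1}$, $(1-\ee_n)\tfrac{1}{2}(\delta_{n+1}+\delta_n)$, $\delta_n$.
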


\begin{proof}
Define $K$ as the one-point compactification of $\mathbb{N}$. Then $c$ is isometrically isomorphic to $C(K)$. Set $F\colon [0,1] \to B_{M(K)}$ by $F(0)=\delta_\infty$, $F(1/n)=\delta_n$, and for $t\in[1/(n+1),1/n]$ the point $F(t)$ runs along the Bezier curve with control points $\delta_{n+1}$, $0$ and $\delta_{n}$ starting at $\delta_{n+1}$ and ending at $\dd_n$, for $\nin$.  The function $F$ is $w^*$-continuous, and defines an isometric embedding $T_F$ as in equation  (\ref{eq-phi}) above. Since $\|F(t)\|<1$ whenever $t\in[0,1]$ with $t\neq 1/n$ or $t\neq 0$, the function $h:K\rightarrow [0,1]$ defined as $h(n)=1/n$ for $\nin$ and $h(\infty)=0$ is a homeomorphism from $K$ into $S_0:=\{0,1,1/2,1/3,\ldots\}\ (\subset [0,1])$ that satisfies the set of sufficient conditions in Theorem \ref{theorem_cks}. This shows the statement. \end{proof}

As mentioned in Proposition \ref{prop:phelps-ideals}, Phelps \cite{Phelps1960a}  showed that given a closed subset $A$ of a compact Hausdorff space $K$, the ideal $I_K(A)\subset C(K)$, consisting of those continuous function vanishing at every point of $A$, has property U in $C(K)$. As a consequence of this and the transitivity of the U-embeddings (see Lemma \ref{lemma-chain}) we have the following corollary.

\begin{corollary}\label{cor-c0}
The Banach space $c_0$ can be U-embedded into $C[0,1]$.
\end{corollary}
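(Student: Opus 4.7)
The plan is to combine the two ingredients that the authors explicitly flag: Phelps' result on closed ideals (Proposition \ref{prop:phelps-ideals}) and the transitivity statement for U-embeddings (Lemma \ref{lemma-chain}), using the just-established Corollary \ref{cor:c-beta-non-poly} as the ``outer'' embedding.

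First I would identify $c_0$ as a closed ideal inside $c$. Let $K$ denote the one-point compactification of $\N$, with limit point $\infty$, so that $c$ is isometrically isomorphic to $C(K)$ via the canonical map sending a convergent sequence $(x_n)$ to the function $f\in C(K)$ with $f(n)=x_n$ and $f(\infty)=\lim_n x_n$. Under this identification, the subspace $c_0$ corresponds exactly to the closed ideal $I_{\{\infty\}}(K)=\{f\in C(K):f(\infty)=0\}$.

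Next, by Proposition \ref{prop:phelps-ideals}, the inclusion $i\colon c_0\hookrightarrow c$ is a U-embedding. By Corollary \ref{cor:c-beta-non-poly}, there is a U-embedding $j\colon c\to C[0,1]$. Applying the ``composition of two U-embeddings is a U-embedding'' half of Lemma \ref{lemma-chain}, we conclude that $j\circ i\colon c_0\to C[0,1]$ is itself a U-embedding, which is the desired statement.

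There is no genuine obstacle here beyond making the identification $c_0\cong I_{\{\infty\}}(K)$ explicit; everything else is a direct citation. The only subtlety worth mentioning is that Lemma \ref{lemma-chain} is phrased for the case where $j$ is assumed a U-embedding and characterizes when $j\circ i$ is one in terms of $i$, so one should point to the remark immediately after its statement (``if $i$ and $j$ are both U-embeddings, so it is $j\circ i$'') to invoke the composition result in the form needed.
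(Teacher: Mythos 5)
Your proof is correct and follows exactly the route the paper intends: identify $c_0$ with the closed ideal $I_{\{\infty\}}(K)$ in $c\cong C(K)$ for $K$ the one-point compactification of $\N$, apply Proposition \ref{prop:phelps-ideals} and Corollary \ref{cor:c-beta-non-poly}, and compose via Lemma \ref{lemma-chain}. Your remark about invoking the ``composition of two U-embeddings'' direction of Lemma \ref{lemma-chain} is a fair point of care, but there is no substantive difference from the paper's argument.
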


\begin{remark}\label{rem-renorming}
\rm Corollary \ref{cor-c0} shows something that should be clear from the very beginning: The property of being U-embedded into a Banach space is not stable by renormings. Indeed, $c_0$ has an equivalent G\^ateaux smooth norm $\nn_1$, as it does every separable Banach space. However, $(c_0,\nn_1)$ cannot be embedded in any $C(K)$ space (see Corollary \ref{cor-gateaux} above).\eor
\end{remark}

A topological subspace $Z$ of a compact space $S$ is called a {\bf zero-set} whenever there exists $f\in C(S)$ such that $Z=\{t\in S:\  f(t)=0\}$. The next lemma is well known; we omit the proof.

\begin{lemma} \label{lema_same_Gd-zs}
Let $Z$ be a compact subspace of a normal space $S$. Then the following conditions are equivalent:
\begin{enumerate}[\rm (i)]
    \item $Z$ is a zero-set in $S$.
    \item $Z$ is a $G_\delta$-set in $S$.
\end{enumerate}
\end{lemma}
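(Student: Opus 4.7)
The plan is to prove the two directions separately, with the forward implication (i) $\Rightarrow$ (ii) being essentially trivial and the reverse (ii) $\Rightarrow$ (i) relying on Urysohn's lemma together with a weighted series construction.

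For (i) $\Rightarrow$ (ii), I would simply note that if $Z = \{t \in S : f(t) = 0\}$ for some $f \in C(S)$, then
\[
Z = \bigcap_{n=1}^{\infty} \left\{t \in S : |f(t)| < \tfrac{1}{n}\right\},
\]
and each set in the intersection is open by continuity of $f$. This direction doesn't use compactness of $Z$ nor normality of $S$.

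For the harder direction (ii) $\Rightarrow$ (i), suppose $Z = \bigcap_{n=1}^{\infty} U_n$ with each $U_n$ open in $S$. Since $Z$ is compact (hence closed in the Hausdorff/normal context) and $S \setminus U_n$ is closed and disjoint from $Z$, Urysohn's lemma gives $f_n \in C(S)$ with $0 \le f_n \le 1$, $f_n \equiv 0$ on $Z$, and $f_n \equiv 1$ on $S \setminus U_n$. Define
\[
f := \sum_{n=1}^{\infty} 2^{-n} f_n.
\]
Uniform convergence of the series gives $f \in C(S)$ with $0 \le f \le 1$. Clearly $f \equiv 0$ on $Z$. Conversely, if $t \notin Z$, then $t \notin U_n$ for some $n$, whence $f_n(t) = 1$ and so $f(t) \ge 2^{-n} > 0$. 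Therefore $Z = \{t \in S : f(t) = 0\}$, which is the desired zero-set representation.

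The only subtle point is justifying that $Z$ and $S\setminus U_n$ are disjoint closed sets eligible for Urysohn separation; once the ambient framework (compact in a normal/Hausdorff space) is fixed this is immediate, so there is no real obstacle. The proof is standard point-set topology and the lemma is stated in the paper precisely because the characterization is invoked elsewhere, not because its proof requires new ideas.
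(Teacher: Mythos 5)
Your proof is correct and is the standard argument; the paper itself omits the proof of this lemma as well known, so there is nothing to compare it against. The only point worth making explicit is that closedness of $Z$ (which is what Urysohn's lemma actually needs, and the only place compactness enters) follows from compactness because the ambient spaces in the paper are compact Hausdorff --- a caveat you already flag.
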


\begin{corollary}\label{coro_zerosets}
Let $S$ and $K$ be two compact Hausdorff spaces. Assume that there exists a U-embedding $T$ from $C(K)$ into $C(S)$. Then the homeomorphic copy of $K$ inside $S$ given by Theorem~{\em{\ref{theorem_cks}}} (i.e., the \ucore\ $\Az$, see Remark {\em \ref{rem-u-core}}) is a zero-set in $S$ and so a $G_\delta$ set in $S$.
\end{corollary}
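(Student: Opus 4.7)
The plan is to combine Theorem~\ref{theorem_cks}, Remark~\ref{rem-u-core}, and Lemma~\ref{lema_same_Gd-zs}: I will show that the homeomorphic copy of $K$ inside $S$ (which equals $\Az$ by Remark~\ref{rem-u-core}) coincides with $\{s\in S:\|F_T(s)\|=1\}$, and that this set is a $G_\delta$ set in $S$; the zero-set conclusion is then automatic from Lemma~\ref{lema_same_Gd-zs} since $S$ is compact Hausdorff (hence normal).

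First, I would record the identification. By Theorem~\ref{theorem_cks}, there exists a closed $S_0\subset S$ and a homeomorphism $h\colon K\to S_0$ with $F_T(h(k))=\varepsilon(k)\delta^K_k\in S_{M(K)}$ for every $k\in K$, and $\|F_T(s)\|<1$ for every $s\in S\setminus S_0$. By Remark~\ref{rem-u-core}, $S_0=\Az$. Since $\|F_T(s)\|\le \|T\|=1$ for all $s\in S$, this yields the pointwise description
\[
\Az=\{s\in S:\|F_T(s)\|=1\}.
\]

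Next, I would observe that $s\mapsto \|F_T(s)\|$ is $w^*$-lower semicontinuous on $S$. Indeed, since $T\colon C(K)\to C(S)$ is an isometry, for every $s\in S$,
\[
\|F_T(s)\|=\|T^*(\delta_s)\|=\sup_{f\in B_{C(K)}}|(Tf)(s)|,
\]
a supremum of continuous functions, hence lower semicontinuous. (Equivalently, $F_T\colon S\to (X^*,w^*)$ is continuous and the dual norm is $w^*$-lower semicontinuous.) Consequently, for each $n\in \N$ the set $U_n:=\{s\in S:\|F_T(s)\|>1-1/n\}$ is open in $S$, and
\[
\Az=\{s\in S:\|F_T(s)\|=1\}=\bigcap_{n\in\N}U_n,
\]
which exhibits $\Az$ as a $G_\delta$ set in $S$.

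Finally, since $S$ is compact Hausdorff (in particular normal) and $\Az$ is a compact $G_\delta$ subset of $S$, Lemma~\ref{lema_same_Gd-zs} applies and yields that $\Az$ is a zero-set in $S$. There is no real obstacle here beyond being careful about the lower semicontinuity of $\|F_T(\cdot)\|$ --- note that this function need not be continuous, which is precisely why we reduce to the $G_\delta$ characterization rather than attempting to write $\Az$ directly as the zero-set of an explicit continuous function.
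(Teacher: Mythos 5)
Your proof is correct, but it runs in the opposite direction from the paper's. The paper gets the zero-set property \emph{directly} and the $G_\delta$ property as a consequence: it takes the continuous sign function $\varepsilon\colon K\to\{\pm 1\}$ supplied by Theorem~\ref{theorem_cks}, regards it as an element of $C(K)$, and computes that $\langle T\varepsilon,\delta_{h(k)}\rangle=\varepsilon(k)^2=1$ for every $k\in K$ while $|\langle T\varepsilon,\delta_s\rangle|\le\|T^*\delta_s\|<1$ for $s\in S\setminus S_0$; thus $S_0$ is exactly the set where the continuous function $1-T\varepsilon$ vanishes, i.e.\ an explicit zero-set, no auxiliary lemma needed. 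You instead first prove that $S_0=\{s:\|F_T(s)\|=1\}$ is a $G_\delta$ via the $w^*$-lower semicontinuity of $s\mapsto\|T^*(\delta_s)\|=\sup_{f\in B_{C(K)}}|(Tf)(s)|$, and then invoke Lemma~\ref{lema_same_Gd-zs} (compact $G_\delta$ in a normal space $=$ zero-set) to recover the zero-set conclusion. Both arguments are sound; each step of yours checks out, including the identification of $\Az$ with the norm-one level set and the compactness of $h(K)$ needed to apply the lemma. What your route buys is that it uses only the quantitative condition $\|F_T(s)\|<1$ off $S_0$ and never touches $\varepsilon$, so it would survive in settings where no such sign function is available; what it costs is the reliance on Lemma~\ref{lema_same_Gd-zs}. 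One small overstatement: you suggest that one cannot exhibit $\Az$ directly as the zero-set of an explicit continuous function, but the paper does precisely that with $1-T\varepsilon$ --- it is only the specific function $\|F_T(\cdot)\|$ that fails to be continuous, not the possibility of a direct witness.
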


\begin{proof}
Let $h:K\rightarrow h(K)=:S_0\ (\subset S)$ and $\ee:K\rightarrow \{\pm 1\}$ be the functions given by Theorem \ref{theorem_cks}. Observe that
$$
\langle T\ee,\dd_{s_0}\rangle=\langle \ee,T^*\dd_{h(k)}\rangle=\langle \ee,\ee(k)\dd_k\rangle=(\ee(k))^2=1,\ \hbox{ for }s_0=h(k)\in S_0,
$$
while
$$
|\langle T\ee,\dd_{s}\rangle|=|\langle \ee,T^*\dd_s\rangle|\le\|T^*\dd_s\|<1,\ \hbox{ for }s\in S\setminus S_0.
$$
This shows the statement. \end{proof}

Following \cite{Pelczynski1968} and given a continuous embedding $h\colon K\to S$ between compact spaces, a linear operator $T\colon C(K)\to C(S)$ is said to be a {\bf linear extension operator} for $h$  whenever $F_T\circ h =\delta^K$. This terminology allows us to restate Theorem~\ref{theorem_cks} as follows.

\begin{theorem}\label{theorem_equiv-with-eos}
Let $K$ and $S$ be compact spaces. The following conditions are equivalent:
\begin{enumerate}[\rm (i)]
    \item There exists a U-embedding $T\colon C(K)\to C(S)$.
    \item There exists a continuous embedding $h\colon K\to S$ admitting a norm-one extension operator with $h(K)$ a $G_\delta$-set of $S$.
\end{enumerate}
\end{theorem}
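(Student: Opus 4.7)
The plan is to reduce both implications to Theorem~\ref{theorem_cks}, using Corollary~\ref{coro_zerosets} to supply the $G_\dd$ condition and, otherwise, just massaging the given operator by a sign or a cutoff so that it matches the precise conditions of that theorem. I do not expect a serious obstacle; the two directions amount respectively to removing the sign function $\ee$ from the one side and manufacturing the strict inequality $\|F(s)\|<1$ outside $h(K)$ on the other.

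For (i)$\Rightarrow$(ii), I would start from a U-embedding $T\colon C(K)\to C(S)$ and invoke Theorem~\ref{theorem_cks} to obtain a closed $S_0\subset S$, a homeomorphism $h\colon K\to S_0$ and a continuous $\ee\colon K\to\{\pm 1\}$ such that $F_T(h(k))=\ee(k)\dd_k^K$ and $\|F_T(s)\|<1$ on $S\setminus S_0$. Corollary~\ref{coro_zerosets} immediately gives that $h(K)=S_0$ is $G_\dd$ in $S$. To kill the $\ee(k)$-factor, I would pre-compose $T$ with the multiplication operator $M_\ee\colon C(K)\to C(K)$, $f\mapsto\ee\cdot f$, which is a linear isometric involution; setting $\wt T:=T\circ M_\ee$, one has $\|\wt T\|=1$ and
\[
F_{\wt T}(h(k))=M_\ee^*(F_T(h(k)))=M_\ee^*(\ee(k)\dd_k)=\ee(k)^2\dd_k=\dd_k,
\]
so $F_{\wt T}\circ h=\dd^K$. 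Thus $\wt T$ is a norm-one linear extension operator for $h$, as required.

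For the converse, given a continuous embedding $h\colon K\to S$ with $h(K)$ a $G_\dd$-set and a norm-one extension operator $T_0$, I would first appeal to Lemma~\ref{lema_same_Gd-zs} to conclude that $h(K)$ is a zero-set, and pick $\vv\in C(S)$ with $0\le\vv\le 1$, $\vv|_{h(K)}\equiv 1$ and $\vv(s)<1$ for $s\notin h(K)$ (e.g., $\vv:=1-\psi$ with $\psi\in C(S)$, $0\le\psi\le 1$, $\psi^{-1}(0)=h(K)$). I would then replace $T_0$ by the operator $\wt T\colon C(K)\to C(S)$ defined by $\wt T(f)(s):=\vv(s)\,T_0(f)(s)$. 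A direct computation gives $F_{\wt T}(s)=\vv(s)F_{T_0}(s)$, so $F_{\wt T}\circ h=\dd^K$ (condition (i) of Theorem~\ref{theorem_cks} with $\ee\equiv 1$) and $\|F_{\wt T}(s)\|\le\vv(s)<1$ for $s\notin h(K)$ (condition (ii)). The only remaining point is that $\wt T$ is an isometric embedding, which is immediate: $\|\wt T(f)\|_\infty\le\|f\|_\infty$ since $|\vv|\le 1$ and $\|T_0\|=1$, while
\[
\|\wt T(f)\|_\infty\ge\sup_{k\in K}|\wt T(f)(h(k))|=\sup_{k\in K}|f(k)|=\|f\|_\infty.
\]
Theorem~\ref{theorem_cks} then yields that $\wt T$ is a U-embedding from $C(K)$ into $C(S)$, concluding the proof.
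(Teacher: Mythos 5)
Your proposal is correct and follows essentially the same route as the paper: for (i)$\Rightarrow$(ii) the paper also pre-composes $T$ with multiplication by $\ee$ (defining $v(f)=T(\ee\cdot f)$) and invokes Corollary~\ref{coro_zerosets}, and for (ii)$\Rightarrow$(i) it likewise multiplies the extension operator by a cutoff function $g$ with $g^{-1}(1)=h(K)$ obtained via Lemma~\ref{lema_same_Gd-zs} before applying Theorem~\ref{theorem_cks}. Your explicit check that the modified operator is an isometric embedding is a small but welcome addition that the paper leaves implicit.
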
 

\begin{proof}
(i)$\Rightarrow$(ii). Assume that $T\colon C(K)\to C(S)$ is a U-embedding. By Theorem~\ref{theorem_cks}, there exists an homeomorphic embedding $h\colon K\to S$ and a continuous map $\varepsilon\colon K\to\{\pm 1\}$ such that $F_T\circ h = \varepsilon\cdot \delta^K$ and $\|F_T(s)\|<1$ for $s\in S\setminus h(K)$. Define $v\colon C(K)\to C(S)$ by $v(f)= T(\varepsilon\cdot f)$ for every $f\in C(K)$. Then, it is easy to check that $F_v\circ h =\delta^K$, thus $v$ is a norm-one linear extension operator for $h$. The fact that $h(K)$ is a $G_\delta$-set in $S$ follows from Corollary \ref{coro_zerosets} above.

(ii)$\Rightarrow$(i). Assume now that $h\colon K\to S$ is a continuous embedding admitting a norm-one extension operator $T\colon C(K)\to C(S)$ and such that $h(K)$ is a $G_\delta$ set of $S$. By using Theorem~\ref{theorem_cks} with $\varepsilon$ the constant function $1$, we only need to check that $\|F_T(s)\|<1$ for every $s\in S\setminus h(K)$. However this last condition may be not satisfied, for instance if $T$ is regular ---see~\cite[Definition 2.1]{Pelczynski1968}. 
Now, since $h(K)$ is a $G_\delta$ set of $S$, it is a zero-set of $S$ by Lemma~\ref{lema_same_Gd-zs}. Pick $g\in C(S)$ whose range is within $[0,1]$ and such that $h(K)=\{s\in S:\  g(s)=1\}$. Define $v\colon C(K)\to C(S)$ by $v(f):=g\cdot T(f)$ for every $f\in C(K)$. Now, given $s\in S$ and $f\in C(K)$, it holds
\[
\langle f, v^*(\delta_s^S)\rangle =\langle v(f), \delta_s^S\rangle =\langle g\cdot T(f), \delta_s^S\rangle= g(s)\langle T(f), \delta_s^S\rangle = g(s)\langle f, T^*(\delta_s^S)\rangle,
\]
so $v^*(\delta_s^S)=g(s)T^*(\delta_s^S)$ for every $s\in S$. In case $s=h(t)$ for some $t\in K$, then
\[
F_v(h(t))=g(h(t))\delta_t^K=\delta_t^K,
\]
since $g(h(t))=1$. In other case, for $s\in S\setminus h(K)$ it holds that $0\leq g(s)<1$, so
\[
\|F_v(s)\|=\|g(s)F_T(s)\|=|g(s)|\|F_T(s)\|<1,
\]
since $T$ is norm-one. Now, an appeal to Theorem~\ref{theorem_cks} shows that $v$ is a U-embedding and the proof is over.
\end{proof}

\begin{definition}
Let $S$ be a compact Hausdorff space. We say that a subset $K\subset S$ is a {\bf retract} of $S$ if there exists a continuous onto mapping $r\colon S\longrightarrow K$ such that $r(k)=k$ for every $k\in K$.
\end{definition}

The following result should be compared with Lemma~\ref{lemma_compositionOperator}.

\begin{proposition}\label{prop:retraction}
Let $S$ be a compact Hausdorff space and $K\subset S$ a retract which is a zero set. Then, there exists a U-embedding from $C(K)$ into $C(S)$.
\end{proposition}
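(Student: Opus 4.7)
The plan is to apply Theorem~\ref{theorem_equiv-with-eos} (the equivalence (ii)$\Rightarrow$(i)) to the inclusion $h\colon K\hookrightarrow S$, using the retraction $r$ to produce a norm-one linear extension operator.

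First, I would verify that the inclusion $h\colon K\to S$ satisfies the topological hypothesis of Theorem~\ref{theorem_equiv-with-eos}: trivially it is a continuous embedding, and $h(K)=K$ is a zero-set in $S$ by assumption, hence a $G_\delta$-set by Lemma~\ref{lema_same_Gd-zs}.

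Second, I would construct the norm-one extension operator directly from the retraction. Let $r\colon S\to K$ be the continuous onto map witnessing that $K$ is a retract, so $r(k)=k$ for all $k\in K$. Define
\[
T\colon C(K)\to C(S),\qquad T(f):=f\circ r.
\]
Linearity is immediate, and since $r$ is onto,
\[
\|T(f)\|_\infty=\sup_{s\in S}|f(r(s))|=\sup_{k\in K}|f(k)|=\|f\|_\infty,
\]
so $T$ is a norm-one operator (in fact an isometric embedding). To check that it is a linear extension operator for $h$, I would compute $F_T$ using~(\ref{eq-varphi}): for any $f\in C(K)$ and $s\in S$,
\[
\langle f,F_T(s)\rangle=\langle T(f),\delta_s^S\rangle=(f\circ r)(s)=\langle f,\delta_{r(s)}^K\rangle,
\]
hence $F_T(s)=\delta_{r(s)}^K$. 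For $s=h(k)=k\in K$ this gives $F_T(h(k))=\delta_k^K=\delta^K(k)$, so $F_T\circ h=\delta^K$, as required.

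Finally, I would invoke Theorem~\ref{theorem_equiv-with-eos}\,(ii)$\Rightarrow$(i) with this $h$ and $T$ to conclude the existence of a U-embedding $C(K)\to C(S)$. There is essentially no obstacle here, since the heavy lifting (the modification of $T$ by a suitable $g\in C(S)$ vanishing exactly off $K$ so that the resulting operator satisfies $\|F_v(s)\|<1$ for $s\notin K$) is already carried out inside the proof of that theorem; my only task is to produce the hypothesis, which the retraction makes transparent. It is worth noting that $T$ itself is just the composition operator $C_r$, and by Lemma~\ref{lemma_compositionOperator} it is \emph{not} a U-embedding unless $r$ is a homeomorphism, so the passage through Theorem~\ref{theorem_equiv-with-eos} (which truncates the operator using a zero-set defining function) is essential.
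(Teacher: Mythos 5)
Your proof is correct, and it takes a mildly different route from the paper's. The paper argues directly from Theorem~\ref{theorem_cks}: it fixes $f\in C(S)$ with $K=f^{-1}(\{0\})$ and $0\le f\le 1$, defines $F(s)=\frac{1-f(s)}{1+f(s)}\,\delta_{r(s)}$ by hand, checks that $F(k)=\delta_k$ on $K$ and $\|F(s)\|<1$ off $K$, and concludes that $T_F$ is a U-embedding. You instead verify the hypotheses of Theorem~\ref{theorem_equiv-with-eos}\,(ii): the inclusion $h$ is an embedding with $h(K)=K$ a $G_\delta$ by Lemma~\ref{lema_same_Gd-zs}, and the composition operator $C_r(f)=f\circ r$ is a norm-one linear extension operator for $h$ since $F_{C_r}(s)=\delta_{r(s)}^K$. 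The two arguments are close relatives: the proof of Theorem~\ref{theorem_equiv-with-eos}\,(ii)$\Rightarrow$(i) multiplies your $C_r$ by a function $g$ that equals $1$ exactly on $K$, producing $F_v(s)=g(s)\delta_{r(s)}$, which is the paper's $F$ with $g=\frac{1-f}{1+f}$; so the resulting embedding is essentially identical, and the difference is only in where the truncation step is housed. The authors themselves flag your route in the remark following the proposition (as a combination of P\'e{\l}czy\'nski's Proposition 3.3 and Theorem~\ref{theorem_equiv-with-eos}); your version is self-contained because you build the extension operator from the retraction explicitly rather than citing P\'e{\l}czy\'nski. Your closing observation that $C_r$ alone is not a U-embedding (Lemma~\ref{lemma_compositionOperator}) and that the truncation by the zero-set function is indispensable is accurate and a useful sanity check.
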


\begin{proof}
Let $r\colon S\to K$ be a retraction and $f\in C(S)$ with $K=f^{-1}(\{0\})$. We can assume without loss of generality that $0\leq f(s)\leq 1$ for every $s\in S$. Set $F\colon S \to (M(K),w^*)$ defined for every $s\in S$ by 
\begin{equation*}
    F(s)= \dfrac{1-f(s)}{1+f(s)} \delta_{r(s)}.
\end{equation*}
Since $r$ is a retraction, we have $F(k) = \delta_{k}$ for every $k\in K$. If $s\in S\setminus K$ then $0\leq \|F(s)\|<1$. Theorem~\ref{theorem_cks} concludes that the associated isomorphism $T_F\colon C(K) \rightarrow C(S)$ is a U-embedding.
\end{proof}

\begin{remark}
\rm Proposition \ref{prop:retraction} above could also be obtained as a combination of \cite[Proposition 3.3]{Pelczynski1968} and Theorem~\ref{theorem_equiv-with-eos}.
\end{remark}

An easy corollary of the former proposition is the following:

\begin{corollary}
Let $\alpha<\kappa$ be two ordinals. Then $C([0,\alpha])$ can be U-embedded into $C([0,\kappa])$.
\end{corollary}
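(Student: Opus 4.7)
The plan is to invoke Proposition~\ref{prop:retraction}: to produce a U-embedding $C([0,\alpha])\hookrightarrow C([0,\kappa])$ it suffices to exhibit $[0,\alpha]$, viewed as a subspace of $[0,\kappa]$ under the order topology, as simultaneously a retract and a zero-set.

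The key observation, specific to the order topology on ordinals, is that $[0,\alpha]$ is in fact \emph{clopen} in $[0,\kappa]$ whenever $\alpha<\kappa$. Indeed, since $\alpha<\kappa$ the successor $\alpha+1$ lies in $[0,\kappa]$, so
\[
[0,\alpha]=\{\beta\in[0,\kappa]:\beta<\alpha+1\}
\]
is a subbasic open set; it is also closed, being a closed initial segment. From clopenness both requirements of Proposition~\ref{prop:retraction} follow at once. For the zero-set property, the characteristic function $\chi_{(\alpha,\kappa]}$ of the clopen complement is continuous on $[0,\kappa]$, and its zero locus is precisely $[0,\alpha]$. For the retract property, define $r\colon[0,\kappa]\to[0,\alpha]$ by $r(\beta):=\min\{\beta,\alpha\}$; it equals the identity on the clopen set $[0,\alpha]$ and is constantly $\alpha$ on the clopen complement $(\alpha,\kappa]$, so it is continuous and fixes $[0,\alpha]$ pointwise.

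Proposition~\ref{prop:retraction} then produces the desired U-embedding, and the proof is complete. There is no real obstacle in this argument; the only pressure point is the use of the strict inequality $\alpha<\kappa$, which is exactly what guarantees the existence of $\alpha+1$ inside $[0,\kappa]$ and hence the openness of $[0,\alpha]$. Without clopenness one would have to argue for $[0,\alpha]$ being a $G_\delta$-set in $[0,\kappa]$ and construct a norm-one extension operator by hand (via Theorem~\ref{theorem_equiv-with-eos}), a route that can genuinely fail when $\alpha$ is a limit ordinal of uncountable cofinality sitting far below $\kappa$.
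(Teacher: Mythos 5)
Your proof is correct and follows the paper's own route exactly: the paper likewise deduces the corollary from Proposition~\ref{prop:retraction} by noting that $[0,\alpha]$ is a retract and a zero-set of $[0,\kappa]$, though it leaves both facts unjustified. Your verification via clopenness of $[0,\alpha]$, the retraction $r(\beta)=\min\{\beta,\alpha\}$, and the characteristic function of the complement supplies precisely the missing details.
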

\begin{proof}
$[0,\alpha]$ is a retract of $[0,\kappa]$ and $[0,\alpha]$ is clearly a zero set in $[0,\kappa]$.
\end{proof}

\begin{corollary}
Let $h\colon K\to S$ be a neighborhood coretraction ---i.e., for each $t\in K$ there is a closed neighborhood of $t$, say $V\subset K$, such that $h(V)$ is a retract of $S$---. Then,  $C(K)$ can be U-embbeded into $C(S)$.
\end{corollary}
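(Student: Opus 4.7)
The plan is to apply Theorem~\ref{theorem_cks} (with $\varepsilon\equiv 1$) by constructing, from the local retractions and a subordinate partition of unity, a $w^*$-continuous map $F\colon S\to (M(K),w^*)$ such that $F(h(k))=\delta_k^K$ for every $k\in K$ and $\|F(s)\|<1$ for every $s\in S\setminus h(K)$; the associated operator $T_F$ will then be the sought U-embedding. Since the coretraction hypothesis forces $h$ to be a topological embedding, I identify $K$ with $h(K)\subset S$. Compactness of $K$ provides finitely many closed sets $V_1,\dots,V_n\subset K$ whose interiors $\mathrm{int}_K(V_i)$ cover $K$, each carrying a retraction $r_i\colon S\to V_i$.

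By normality of $S$, I enlarge the open cover $\{\mathrm{int}_K(V_i)\}$ of $K$ to an open cover $\{W_i\}_{i=1}^n$ of $K$ in $S$ with $W_i\cap K=\mathrm{int}_K(V_i)$, fix a partition of unity $\{\psi_i\}_{i=1}^n$ of $\bigcup_{i} W_i$ subordinate to $\{W_i\}$, and choose a Urysohn function $\chi\in C(S,[0,1])$ with $\chi|_K\equiv 1$ and $\mathrm{supp}(\chi)\subset \bigcup_{i} W_i$. I then set
\[
F(s)\,:=\,\chi(s)\sum_{i=1}^n \psi_i(s)\,\delta_{r_i(s)}^K
\]
on $\bigcup_{i} W_i$, and $F(s):=0$ otherwise. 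Continuity follows from the vanishing of $\chi$ on the boundary of its support, and since the $\delta^K_{r_i(s)}$ are positive Dirac masses one has $\|F(s)\|=\chi(s)\sum_i \psi_i(s)=\chi(s)$ throughout $S$. When $s\in K$, $\chi(s)=1$, and any index $i$ with $\psi_i(s)>0$ forces $s\in W_i\cap K\subset V_i$, so $r_i(s)=s$ and therefore $F(s)=\delta_s$; condition (i) of Theorem~\ref{theorem_cks} is thus satisfied.

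The remaining condition (ii), namely $\chi(s)<1$ for every $s\in S\setminus K$, is equivalent to $K$ being a zero-set of $S$, and by Lemma~\ref{lema_same_Gd-zs} to $K$ being $G_\delta$ in $S$; once this is established, one simply replaces $\chi$ by $(1-g)\chi$, where $g\in C(S,[0,1])$ vanishes precisely on $K$, thereby keeping $\chi\equiv 1$ on $K$ while forcing $\chi<1$ off $K$. I expect this $G_\delta$-property of $K$ in $S$ to be the main obstacle of the proof. The route I have in mind is to exhibit each $V_i$ as a zero-set of $S$ by combining the retraction $r_i\colon S\to V_i$ with a Urysohn-type separation inside $V_i$, and then to use that a finite union of zero-sets is a zero-set to conclude that $K$ itself is a zero-set of $S$; this last topological step is where I anticipate the delicate work to sit.
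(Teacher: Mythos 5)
There is a genuine gap, and you have (correctly) located it yourself: the zero-set/$G_\delta$ step. Your partition-of-unity construction of $F$ out of the local retractions is essentially a self-contained proof that a neighborhood coretraction admits a norm-one (regular) extension operator --- this is exactly the content of \cite[Proposition 3.5]{Pelczynski1968}, which the paper simply cites and combines with Theorem~\ref{theorem_equiv-with-eos}; so that half of your argument is sound (modulo minor care: build the $\psi_i$ as functions on all of $S$ with $\sum_i\psi_i\equiv 1$ only on a compact neighborhood of $h(K)$ inside $\bigcup_i W_i$, since the open set $\bigcup_i W_i$ need not be normal). But the route you sketch for the remaining point --- that $h(K)$ is a zero-set of $S$ --- does not work. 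A retract of a compact Hausdorff space need \emph{not} be a zero-set: this is precisely why Proposition~\ref{prop:retraction} has to \emph{assume} that the retract is a zero-set rather than deduce it. A ``Urysohn-type separation'' can only give you a function vanishing \emph{on} $h(V_i)$ and equal to $1$ on a prescribed disjoint closed set; to get a function vanishing \emph{exactly} on $h(V_i)$ you already need $h(V_i)$ to be $G_\delta$ (Lemma~\ref{lema_same_Gd-zs}), so the plan is circular.

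Worse, the gap cannot be closed by any argument of this kind. Take $S=[0,1]^{\Gamma}$ with $\Gamma$ uncountable and $K=\{t\}$ a single point: the inclusion is a neighborhood coretraction (the constant map retracts $S$ onto $\{t\}$), yet $\{t\}$ is not a $G_\delta$-set of $S$, and by Corollary~\ref{coro_onedimension} the space $\mathbb{R}=C(\{t\})$ does \emph{not} U-embed into $C(S)$. So condition (ii) of Theorem~\ref{theorem_cks} genuinely fails for your $F$ (one has $\|F(s)\|=1$ on a set strictly larger than $h(K)$ no matter how you damp $\chi$), and the statement needs an additional hypothesis forcing $h(K)$ to be $G_\delta$ in $S$ --- a condition that Corollary~\ref{coro_zerosets} shows to be necessary. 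You should flag this rather than try to prove the zero-set claim; the paper's own one-line deduction from \cite[Proposition 3.5]{Pelczynski1968} and Theorem~\ref{theorem_equiv-with-eos} quietly needs the same $G_\delta$ input.
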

\begin{proof}
This is an straightforward consequence of \cite[Proposition 3.5]{Pelczynski1968} and Theorem~\ref{theorem_equiv-with-eos}.
\end{proof}

\begin{remark}
\rm
Theorem~\ref{theorem_equiv-with-eos} allows us to translate some of the results in \cite{Pelczynski1968} in terms of U-embeddings between $C$-spaces. For instance, we can show \cite[Proposition 2.9]{Pelczynski1968} that $C(K,\mathbb{C})$ can be U-embedded into $C(S,\mathbb{C})$ if and only if $C(K)$ is U-embedded into $C(S)$. \eor
\end{remark}

\subsection{Some applications}

The following result is a corollary of Theorem~\ref{theorem_cks}. It can be also deduced from Corollary~\ref{coro-one-dimension} and the fact that the points of G\^ateaux smoothness in $C(K)$ are peaking functions. Notice that all norms on a one-dimensional Banach space agree modulus a non-negative factor. Thus, we may assume that the finite-dimensional Banach space is $\R$ endowed with the absolute-value norm.

\begin{corollary}\label{coro_onedimension}
A one-dimensional Banach space can be U-embedded into certain $C(K)$ if and only if $K$ contains at least a $G_\delta$-point.
\end{corollary}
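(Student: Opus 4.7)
The plan is to apply Theorem~\ref{theorem_equiv-with-eos} to the singleton compact space, noting that the one-dimensional Banach space is isometric to $C(\{*\})$, but equally well one can use Corollary~\ref{coro-one-dimension} combined with an explicit description of the G\^ateaux smooth points of $C(K)$. I will sketch both paths, and lean on the second since the paper already pre-announces it.

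Following the Corollary~\ref{coro-one-dimension} route, the existence of a U-embedding of $\R$ (absolute-value norm) into $C(K)$ is equivalent to $C(K)$ possessing a G\^ateaux smooth point $f\in S_{C(K)}$. By Lemma~\ref{lemma_faces_CK}, the face $\face(f)$ consists of those $\mu\in S_{M(K)}$ with $\mu^+$ supported on $\sigma^+(f)$ and $\mu^-$ supported on $\sigma^-(f)$. A routine inspection shows that this face reduces to a single measure if and only if, up to sign change on $f$, $\sigma^+(f)=\{k_0\}$ for some $k_0\in K$ and $\sigma^-(f)=\emptyset$: indeed, if either $\sigma^+(f)$ had two distinct points or both $\sigma^\pm(f)$ were nonempty, one would construct a non-trivial affine segment of norming measures (convex combinations of Diracs) inside $\face(f)$.

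Given such a \emph{peaking} $f$ (with $f(k_0)=1$ and $|f(k)|<1$ for $k\neq k_0$), the equality
\[
\{k_0\}=\bigcap_{n\in\N}\{k\in K:\ |f(k)|>1-1/n\}
\]
exhibits $\{k_0\}$ as a $G_\delta$-set, i.e., $k_0$ is a $G_\delta$-point of $K$. Conversely, assuming $k_0$ is a $G_\delta$-point, Lemma~\ref{lema_same_Gd-zs} identifies $\{k_0\}$ as a zero-set of some $g\in C(K)$ with $0\le g\le 1$; the function $f:=1-g/\|g\|_\infty$ satisfies $f^{-1}(\{1\})=\{k_0\}$ and $|f(k)|<1$ for $k\neq k_0$, so $f$ is a G\^ateaux smooth point of $C(K)$, and Corollary~\ref{coro-one-dimension} yields the required U-embedding (explicitly, $T(\alpha):=\alpha f$).

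There is no real obstacle here: the argument is essentially a translation of ``unique norming measure'' into ``peaking function'' and then into ``$G_\delta$-point''. The only point that deserves care is the first step, namely checking that the face $\face(f)$ collapses to a point precisely when $|f|$ attains the value $1$ at a single point of $K$ and with a single sign; once this is established, Lemma~\ref{lema_same_Gd-zs} cleanly bridges the topological condition to the existence of a peaking function. As an alternative route, one may instead invoke Theorem~\ref{theorem_equiv-with-eos} with $K'=\{*\}$: then a U-embedding $C(\{*\})\to C(K)$ amounts to a continuous embedding $*\mapsto k_0$ whose range is a $G_\delta$-set, which is literally the statement that $k_0$ is a $G_\delta$-point of $K$.
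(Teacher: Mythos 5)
Your argument is correct, and your main route is not the one the paper actually writes out. The paper's proof of Corollary~\ref{coro_onedimension} applies Theorem~\ref{theorem_cks} directly to $\mathbb{R}=C(\{1\})$: a U-embedding into $C(K)$ exists iff there is $f\colon K\to[-1,1]$ with $\{t:|f(t)|=1\}$ a singleton, i.e.\ iff some singleton is a zero-set, and then Lemma~\ref{lema_same_Gd-zs} converts this to the $G_\delta$ condition. You instead flesh out the route the paper only flags in the sentence preceding the corollary: Corollary~\ref{coro-one-dimension} reduces the question to the existence of a G\^ateaux smooth point of $C(K)$, and you correctly identify these, via the face description of Lemma~\ref{lemma_faces_CK}, as the peaking functions (up to sign, $\sigma^+(f)$ a singleton and $\sigma^-(f)=\emptyset$), after which the same zero-set/$G_\delta$ bridge finishes the proof. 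Your route is more elementary and self-contained, relying only on early material (Corollary~\ref{coro-one-dimension} and Lemma~\ref{lemma_faces_CK}) rather than on the full characterization of U-embeddings between $C$-spaces; the paper's route, by contrast, exhibits the corollary as a degenerate instance of Theorem~\ref{theorem_cks}, which is why the subsequent generalization to $\ell_\infty^N$ and $N$ many $G_\delta$-points is immediate there. Two small points of hygiene: your formula $f:=1-g/\|g\|_\infty$ silently assumes $g\not\equiv 0$, i.e.\ $K\neq\{k_0\}$ (in the degenerate one-point case take $f\equiv 1$); and in the alternative route via Theorem~\ref{theorem_equiv-with-eos} you should note that the norm-one extension operator for the embedding of a singleton always exists (send $\alpha$ to the constant function $\alpha$), so that condition (ii) there really does reduce to the $G_\delta$ requirement alone. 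Neither affects the validity of the proof.
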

\begin{proof}
Theorem~\ref{theorem_cks} states that $\mathbb{R}=C(\{1\})$ can be U-embedded into $C(K)$ if and only if there is a function $f\colon K\to [-1,1]$ such that $\{t: |f(t)|=1\}$ has cardinality exactly $1$. This is equivalent to say that there exists $t\in K$ such that $\{t\}$ is a zero-set of $C(K)$. Now, a singleton in a completely regular space is a zero-set if and only if it is a $G_\delta$-set. The proof is over.
\end{proof}

A simple application of the previous result is given by the following corollary. It provides a necessary condition for U-embeddability.

\begin{corollary}
Let $K$ and $S$ be two compact spaces such that $C(K)$ is U-embedded into $C(S)$. If $K$ has at least a $G_\delta$-point then so does $S$.
\end{corollary}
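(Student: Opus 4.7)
The plan is to derive the corollary by chaining together two facts already established in the excerpt: the characterization of U-embeddability of the scalar field in terms of $G_\delta$-points (Corollary~\ref{coro_onedimension}) and the composition property of U-embeddings (Lemma~\ref{lemma-chain}). The whole argument is essentially a diagram chase through $\mathbb{R} \to C(K) \to C(S)$.

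First I would apply the ``if'' direction of Corollary~\ref{coro_onedimension} to the hypothesis that $K$ has a $G_\delta$-point: this yields a U-embedding $i\colon \mathbb{R} \to C(K)$. Next, by assumption we have a U-embedding $T\colon C(K) \to C(S)$, so Lemma~\ref{lemma-chain} (more precisely, the remark following it, which states that the composition of two U-embeddings is a U-embedding) guarantees that $T\circ i\colon \mathbb{R} \to C(S)$ is a U-embedding. Finally, applying the ``only if'' direction of Corollary~\ref{coro_onedimension} to $C(S)$ forces $S$ to contain a $G_\delta$-point, which concludes the proof.

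There is essentially no obstacle here; the only point that requires care is noting that Lemma~\ref{lemma-chain} actually provides the direction we need (composition of two U-embeddings is a U-embedding), as the reverse implication about the outer map is the subtle one mentioned in the paragraph following that lemma. Since both $i$ and $T$ are U-embeddings in our setting, the easier direction of the composition principle applies and delivers the conclusion immediately.
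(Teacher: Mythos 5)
Your argument is correct and is exactly the paper's proof: the paper also derives this corollary by combining Lemma~\ref{lemma-chain} (composition of U-embeddings) with both directions of Corollary~\ref{coro_onedimension}. You have simply spelled out the same diagram chase $\mathbb{R}\to C(K)\to C(S)$ that the paper leaves implicit.
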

\begin{proof}
This is a consequence of Lemma \ref{lemma-chain} and the previous corollary.
\end{proof}

It is known that a Valdivia compact $K$ can be constructed without $G_\delta$-points (see \cite{Correa}),
%(Reference, xx, A Valdivia compact space with no $G_\delta$ points and few nontrivial convergent sequences C. Correa, D. Tausk Published 5 June 2015
%Mathematics     arXiv: Functional Analysis) 
which leads to the fact that for any compact $S$ with at least a $G_\delta$-point (say, e.g., a metrizable or a convex space), the Banach space $C(S)$ can not be U-embedded into such $C(K)$ space. 

\medskip

Corollary~\ref{coro_onedimension} above can be generalized to a finite number of $G_\delta$-points. Corollary~\ref{coro_zerosets} gives one direction. Namely, if $\ell_\infty^{n}$ can be U-embedded into $C(K)$ then, $K$ contains at least $n$ $G_\delta$-points. The reciprocal is also true:

\begin{proposition}
Let $N$ be a natural number and $K$ be a compact Hausdorff space. Then the space $\ell_\infty^N$ can be U-embedded into $C(K)$ if, and only if, $K$ contains at least $N$ $G_\delta$-points.
\end{proposition}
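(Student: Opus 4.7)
The plan is to dispatch both directions via Theorem~\ref{theorem_equiv-with-eos} and its Corollary~\ref{coro_zerosets}, after identifying $\ell_\infty^N$ with $C(\{1,\ldots,N\})$. For necessity, I would apply Corollary~\ref{coro_zerosets} to a hypothetical U-embedding $T\colon C(\{1,\ldots,N\})\to C(K)$ to obtain a homeomorphic copy $\{h(1),\ldots,h(N)\}$ of the $N$-point space inside $K$, which is a $G_\delta$-set of $K$. To upgrade this to the statement that each $h(i)$ is individually a $G_\delta$-point, I would use Hausdorffness to pick pairwise disjoint open neighborhoods $W_i \ni h(i)$; intersecting the $G_\delta$ representation $\{h(1),\ldots,h(N)\} = \bigcap_n V_n$ with each $W_i$ leaves exactly $\{h(i)\} = \bigcap_n (V_n \cap W_i)$. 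Hence $K$ contains (at least) $N$ distinct $G_\delta$-points.

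For sufficiency, given distinct $G_\delta$-points $k_1,\ldots,k_N$ of $K$, I would define $h\colon\{1,\ldots,N\}\to K$ by $h(i):=k_i$ (a homeomorphic embedding, trivially) and verify the two hypotheses of Theorem~\ref{theorem_equiv-with-eos}(ii). The image $h(\{1,\ldots,N\}) = \{k_1,\ldots,k_N\}$ is a finite union of $G_\delta$-singletons, hence a $G_\delta$-subset of $K$. To obtain a norm-one linear extension operator $T$ for $h$, i.e., one satisfying $F_T\circ h = \delta^{\{1,\ldots,N\}}$, I would invoke normality of $K$ to produce pairwise disjoint open sets $U_i\ni k_i$ and Urysohn's lemma to pick $\phi_i\in C(K,[0,1])$ with $\phi_i(k_i)=1$ and $\phi_i\equiv 0$ outside $U_i$; then set
\[
T(f)(s):=\sum_{i=1}^N \phi_i(s)\,f(i).
\]
Because the supports of the $\phi_i$ lie in the pairwise disjoint $U_i$, at most one term is nonzero at any $s\in K$, so $\|T(f)\|_\infty\le\|f\|_\infty$; and $T(f)(k_i)=\phi_i(k_i)f(i)=f(i)$ gives exactly $F_T\circ h=\delta^{\{1,\ldots,N\}}$. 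Theorem~\ref{theorem_equiv-with-eos} then produces the desired U-embedding $\ell_\infty^N \hookrightarrow C(K)$.

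I expect no serious obstacle: once Theorem~\ref{theorem_equiv-with-eos} is in hand, both directions reduce to elementary topology plus one application of Urysohn. The only point deserving a brief word of care is the passage, in the necessity direction, from ``a finite set is $G_\delta$'' to ``each of its points is $G_\delta$''; this is not automatic in general but is immediate for finite subsets of a Hausdorff space by the separation trick above, and it is what makes the numerical count of $G_\delta$-points line up correctly with the dimension $N$.
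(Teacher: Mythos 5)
Your argument is correct, and the sufficiency direction is organized differently from the paper's. The paper works directly with Theorem~\ref{theorem_cks}: it builds a single $w^*$-continuous map $F\colon K\to B_{\ell_1^N}$ as a series $\sum_n 2^{-n}h_n$ of Urysohn-type bump functions supported on a decreasing base of neighborhoods of the $G_\delta$-points, arranged so that $\|F(t)\|=1$ exactly at $t\in\{p_1,\dots,p_N\}$ with $F(p_i)=\delta_{p_i}$; the strict inequality off the distinguished set is thus baked into $F$ itself. You instead build the simplest possible norm-one extension operator $T(f)=\sum_i\phi_i\cdot f(i)$ (a finite sum, with disjointly supported Urysohn functions), observe that $\{k_1,\dots,k_N\}$ is $G_\delta$ as a finite union of $G_\delta$-singletons, and delegate the damping of $\|F_T\|$ off $h(\{1,\dots,N\})$ to the proof of Theorem~\ref{theorem_equiv-with-eos}, which multiplies by a function equal to $1$ exactly on that zero-set. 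The underlying idea is the same (ultimately both need a continuous function peaking exactly on the finite set, which is where the $G_\delta$ hypothesis enters via Lemma~\ref{lema_same_Gd-zs}), but your version is shorter, avoids the mild continuity verification for the paper's piecewise-defined $h_n$, and reuses the already-established machinery more systematically. For necessity both proofs rest on Corollary~\ref{coro_zerosets}; your explicit upgrade from ``the $N$-point image is a $G_\delta$-set'' to ``each of its points is a $G_\delta$-point'' via disjoint Hausdorff neighborhoods fills in a step the paper leaves implicit, and it is exactly the right argument.
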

\begin{proof} As we mentioned, one direction is given by Corollary \ref{coro_zerosets} above. 

Let us assume now that $K$ contains at least $N$ $G_\delta$-points $\{p_i\}_{i=1}^N\subset K$. We are going to construct a U-embedding from $C(\{p_i\}_{i=1}^N)$ ---which is isometrically isomorphic to $\ell_\infty^N$--- into $C(K)$. Choose for each of the points a family $\{G_n^i\}_{n=1}^\infty$ of decreasing open sets such that $\{p_i\}=\bigcap_n G_n^i$ for every $1\leq i\leq N$. Due to the fact that $K$ is Hausdorff, we can also assume that there exist a collection of pairwise disjoint open sets $\{U^i\}_{i=1}^N$ such that $G_n^i\subset U^i$ for every $n\geq 1$ and for every $1\leq i\leq N$. By the Urysohn lemma, there exists a continuous function $g_n\colon K\to[0,1]$ such that $g_n(p_i)=1$ for each $1\leq i\leq N$ and $g_n(t)=0$ for every $t\notin \bigcup_i G_n^i$. Using this function, define $h_n\colon K\to B_{\ell_1^N}$ by $h_n(t)=g_n(t)\delta_{p_i}$ whenever $t$ belongs to $G_n^i$ and $0\in\ell_1$ whenever $t$ does not belong to the union $\bigcup_i G_n^i$. It is easy to see that this function is continuous (it can be seen to be continuous in each of $N+1$ closed sets covering $K$). Now consider the function $h=\sum_n\frac{1}{2^n}h_n$. It is a well-defined function from $K$ to $B_{\ell_1}$ and it is continuous ---the partial sums are uniformly convergent over $K$, and we assume the topology of the norm on the unit ball. Now, observe that $\|h(t)\|=1$ if and only if $t$ belongs to $\{p_1,\dots,p_N\}$ in which case $h(p_i)=\delta_{p_i}$ for every $1\leq i\leq N$. By Theorem~\ref{theorem_cks}  the proof is over.
\end{proof}

\section{Factorization of U-embeddings}

The following result has a simple proof, that shall be omitted. 
\begin{proposition}\label{prop:minimal-ideal}
Let $T\colon X\to C(K)$ be a U-embedding. Put $Z:=F_T^{-1}(0)$ and let $I_K(Z)$ be the set of all elements in $C(K)$ that vanish on $Z$. Then, $T(X)\subset I_K(Z) \subset C(K)$, and the set $I_K(Z)$ is the minimal closed ideal of $C(K)$ containing $T(X)$.
\end{proposition}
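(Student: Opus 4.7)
The plan is straightforward, since the minimal-ideal property is essentially a tautology once one unwinds the definition of $F_T$. First I would verify the inclusion $T(X)\subset I_K(Z)$ by a direct computation: for any $x\in X$ and any $k\in Z$, one has
\[
T(x)(k)=\langle x,F_T(k)\rangle=\langle x,0\rangle=0,
\]
so $T(x)$ vanishes on $Z$. Since $F_T\colon K\to(X^*,w^*)$ is continuous, $Z=F_T^{-1}(0)$ is closed in $K$, hence $I_K(Z)$ is indeed a closed ideal of $C(K)$ (the standard ideal of functions vanishing on a closed set).

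Next I would establish minimality. Let $J$ be any closed ideal of $C(K)$ containing $T(X)$. By the classical description of closed ideals in $C(K)$ (the same fact invoked in Proposition \ref{prop:phelps-ideals}), there exists a closed set $Z'\subset K$ with $J=I_K(Z')$. The aim is to show $Z'\subset Z$, for then $I_K(Z)\subset I_K(Z')=J$, giving minimality.

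For this, pick any $k\in Z'$. Since $T(x)\in J=I_K(Z')$ for every $x\in X$, we have
\[
0=T(x)(k)=\langle x,F_T(k)\rangle\quad\text{for every }x\in X,
\]
forcing $F_T(k)=0$, i.e.\ $k\in F_T^{-1}(0)=Z$. Thus $Z'\subset Z$, which completes the proof.

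I do not expect any genuine obstacle here; in fact the argument uses neither the U-embedding hypothesis nor even that $T$ is isometric, only that $T\colon X\to C(K)$ is a bounded linear operator. This is presumably why the authors chose to omit the proof: the content of the proposition is really the observation that the zero-set of $F_T$ is precisely the common zero-set of the functions in $T(X)$, which immediately identifies $I_K(Z)$ as the smallest closed ideal containing $T(X)$.
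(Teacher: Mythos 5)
Your argument is correct and complete: the inclusion $T(X)\subset I_K(Z)$ follows from $T(x)(k)=\langle x,F_T(k)\rangle$, and minimality follows from the classification of closed ideals of $C(K)$ as $I_K(Z')$ for closed $Z'$ together with the observation that $T(X)\subset I_K(Z')$ forces $F_T(k)=0$ for $k\in Z'$. The paper explicitly omits its proof as ``simple,'' and yours is evidently the intended argument; your remark that neither the U-embedding hypothesis nor isometry is actually needed is also accurate.
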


\begin{remark}
\rm
It is expected that most of the time the set $F_T^{-1}(0)$ should be empty. If this is the case, as  $I_K(\emptyset)=C(K)$, we get that there is no proper closed ideal of $C(K)$ containing the range of $T$.\eor
\end{remark}

\begin{remark}\label{rem-minimal}
\rm The following observations is almost trivial: Let $K$ be a compact topological space, and let $K_0$ be a nonempty closed subspace. The restriction mapping $R:C(K)\rightarrow C(K_0)$ (i.e., $Rf:=f|_{K_0}$ for $f\in C(K)$) is linear and continuous. Then, $R^*:M(K_0)\rightarrow M(K)$ is one-to-one. Indeed, let $\nu_i\in M(K_0)$, $i=1,2$, be such that $\mu:=R^*(\nu_1)=R^*(\nu_2)$. Let $f_0\in C(K_0)$. Tietze's extension theorem ensures the existence of $f\in C(K)$ such that $Rf=f_0$. Thus,
$$
\langle f_0,\nu_i\rangle=\langle Rf,\nu_i\rangle=\langle f,R^*\nu_i\rangle=\langle f,\mu\rangle,
$$
for $i=1,2$. It follows that $\nu_1=\nu_2$.

This applies to the following situation: Assume that (i) $X$ is a Banach space, (ii) $K$ is a compact topological space, and (iii) there is a linear isometry $T:X\rightarrow C(K)$. As above, let $K_0$ be a nonempty closed subset of $K$. The mapping $T_0:=R\circ T:X\rightarrow C(K_0)$ is linear and continuous, and obviously $T_0^*:M(K_0)\rightarrow X^*$ satisfies $T^*_0=T^*\circ R^*$. Thus, if $T$ is a U-embedding it is tempting to say that $T_0$ is also a U-embedding (see Proposition \ref{prop:U_homeoT^*} above). There is a gap in the argument: $T_0$ may fail to be an isometric embedding. If this happens (for example, if $T^*(\pm K_0)$ contains $\Ext B_{X^*}$), then the conjecture turns out to be true.    \eor
\end{remark}

Remark \ref{rem-minimal} applies, in particular. to the following simple result, that shows the interest in the concept of the U-core, introduced in Definition \ref{def:U-core}, and shows that it is, in a sense, minimal for the ``factorization'' of a linear isometric embedding $T:X\rightarrow C(K)$ through a space $C(K_0)$:

\begin{proposition}\label{prop:minimal-ucore}
Let $T\colon X \to C(K)$ be a U-embedding. Take $G:= F_{|\ol{\Az}}$. Then, $T_G\colon  X \to C(\ol{\Az})$ is a U-embedding. 
\end{proposition}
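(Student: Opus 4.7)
The plan is to recognize $T_G$ as the composition $R \circ T$, where $R \colon C(K) \to C(\ol{\Az})$ is the restriction $Rf := f|_{\ol{\Az}}$, and then to transfer uniqueness of Hahn--Banach extensions from $T$ to $T_G$ via the adjoint $R^{*}$. A direct check from \eqref{eq-varphi} and \eqref{eq-phi} gives $T_G(x)(k) = \la x, G(k)\ra = \la x, F_T(k)\ra = T(x)(k)$ for every $x \in X$ and $k \in \ol{\Az}$, so $T_G = R \circ T$ and therefore $T_G^{*} = T^{*} \circ R^{*}$.

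First I would address the gap flagged in Remark~\ref{rem-minimal} by verifying that $T_G$ is actually an isometric embedding. By item (iv) in Subsection~\ref{subsec:U-core} we have $\Ext B_{X^*} = F_T(\Az) \cup (-F_T(\Az)) \subset G(\ol{\Az}) \cup (-G(\ol{\Az}))$, so condition (iv) of Lemma~\ref{lemma-isometry} applies and $T_G$ is indeed an isometry into.

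The central analytic ingredient is that $R^{*}\colon M(\ol{\Az}) \to M(K)$ is itself an isometric embedding. This is because $R$ is a norm-one surjection: Tietze's extension theorem lifts any $g \in B_{C(\ol{\Az})}$ to some $f \in B_{C(K)}$ with $Rf = g$, which forces $\|R^{*}(\nu)\| = \|\nu\|$ for every $\nu \in M(\ol{\Az})$, and in particular yields injectivity of $R^{*}$.

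With these pieces in place the uniqueness argument is immediate. Fix $x^{*} \in S_{X^{*}}$ and suppose $\nu_1, \nu_2 \in M(\ol{\Az})$ are Hahn--Banach extensions of $x^{*}$ through $T_G$. Then $R^{*}(\nu_i)$ has norm $1 = \|x^{*}\|$ and satisfies $T^{*}(R^{*}(\nu_i)) = T_G^{*}(\nu_i) = x^{*}$, so each $R^{*}(\nu_i)$ is a Hahn--Banach extension of $x^{*}$ through the original $T$. Since $T$ is a U-embedding, $R^{*}(\nu_1) = R^{*}(\nu_2)$, and injectivity of $R^{*}$ gives $\nu_1 = \nu_2$. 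The only delicate moment is the isometry check for $T_G$, but the very definition of the U-core is tailored so that $F_T(\ol{\Az})$ still captures every extreme point of $B_{X^*}$ up to sign, which is precisely what Lemma~\ref{lemma-isometry}(iv) demands.
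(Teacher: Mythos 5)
Your proof is correct and follows exactly the route the paper intends: it is the argument sketched in Remark~\ref{rem-minimal} (factor $T_G=R\circ T$, use Tietze to get injectivity of $R^*$, and transfer uniqueness through $T_G^*=T^*\circ R^*$), with the one ``gap'' the paper flags --- that $T_G$ must be checked to be an isometry --- closed precisely as the paper suggests, via $\Ext B_{X^*}=F_T(\Az)\cup(-F_T(\Az))$ and Lemma~\ref{lemma-isometry}(iv). Nothing is missing; your observation that $R^*$ is in fact isometric (not merely injective) is a harmless strengthening of what the argument needs.
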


At the end of Section \ref{sec:U-embeddings} we briefly mentioned that a kind of ``sandwich'' property for U-embeddings is likely to be false (more precisely, that it fails in general that given three Banach spaces $X\subset Y\subset Z$, $Y$ is U-embedded in $Z$ whenever $X$ is U-embedded in $Z$ and in $Y$). In this direction, Proposition \ref{prop:minimal-ideal} can be somehow understood as a partial positive statement for $Y$ being a closed ideal, also satisfying that the triplet $T(X)\subset Y \subset Z$ has ``pairwise'' property U. Thus, Proposition \ref{prop:minimal-ucore} shows that somehow a U-embedding $T$ of $X$ into $C(K)$ factorizes throughout a minimal $C(S)$ space (in this case, $S$ being the closure of the \ucore \ $\Az\subset K$). One may be tempted to think that this can  indeed be a factorization, i.e., $X$ is U-embedded in $C(\ol{\Az})$ and this is again U-embedded in $C(K)$ through the inclusion. Unfortunately $C(\ol{\Az})$ is always a quotient of $C(K)$, but it might not be possible to find a U-embedding into $C(K)$.

\begin{example}\label{ex:counterex-ideals}
\rm 
Let $K$ be a compact space having a non-dense closed subset $B\subset K$ which is not a zero-set. Take $A= \ol{K\setminus B}$, which is non-empty because $B$ is not dense. Thus, we have a U-embedding $T\colon X\to C(K)$, where $X=I_A$ is U-embedded in $C(K)$ through the inclusion $T=i$, and in this case the \ucore \ is $\Az = K\setminus A = B$. By Proposition \ref{prop:minimal-ucore}, we have that $I_A$ is indeed U-embedded in $C(\ol{\Az})= C(B)$. However, it is impossible to find a U-embedding of $C(B)$ into  $C(K)$ because $B$ is not a zero-set (see Corollary \ref{coro_zerosets}).
\end{example}

\section*{Acknowledgements}

Instituto Universitario de Matem\'atica Pura y Aplicada. Universitat Polit\`ecnica de Val\`encia (Spain). The authors were partially supported by the Universitat Polit\`ecnica de Val\`encia (Spain) and by grant PID2021-122126NB-C33 funded by MCIN/AEI/10.13039/501100011033 and by “ERDF A way of making Europe”. The first author was also supported by Generalitat Valenciana (through Project PROMETEU/2021/070 and the predoctoral fellowship CIACIF/2021/378) and by MCIN/AEI/10.13039/501100011033 (through Project PID2019-105011GB). The second author was also supported by Fundación Séneca, Región de Murcia (Grant 19368/PI/14). The third author was also supported by AEI/FEDER (Project MTM2017-83262-C2-1-P of Ministerio de Economía y Competitividad).

\printbibliography

\end{document}